\setlist[enumerate]{itemsep = 0 mm}
\DeclareMathOperator*{\argmin}{arg\min}
\DeclareMathOperator*{\argmax}{arg\max}
\def\grad{\nabla}
\def\ba{\mathbf{a}}
\def\bd{\mathbf{d}}
\def\bI{\mathbf{I}}
\def\cC{\mathcal{C}}
\def\cG{\mathcal{G}}
\def\cI{\mathcal{I}}
\def\cK{\mathcal{K}}
\def\cL{\mathcal{L}}
\def\cN{\mathcal{N}}
\def\cO{\mathcal{O}}
\def\cP{\mathcal{P}}
\def\cQ{\mathcal{Q}}
\def\cS{\mathcal{S}}
\def\cW{\mathcal{W}}
\def\smskip{\smallskip}
\def\texitem#1{\par\smskip\noindent\hangindent 25pt
               \hbox to 25pt {\hss #1 ~}\ignorespaces}
\def\norm#1{\left\|#1\right\|}
\newcommand{\BEAS}{\begin{eqnarray*}}
\newcommand{\EEAS}{\end{eqnarray*}}
\newcommand{\BEA}{\begin{eqnarray}}
\newcommand{\EEA}{\end{eqnarray}}
\newcommand{\BEQ}{\begin{eqnarray}}
\newcommand{\EEQ}{\end{eqnarray}}
\newcommand{\BIT}{\begin{itemize}}
\newcommand{\EIT}{\end{itemize}}
\newcommand{\BNUM}{\begin{enumerate}}
\newcommand{\ENUM}{\end{enumerate}}
\newcommand{\BA}{\begin{array}}
\newcommand{\EA}{\end{array}}
\newcommand{\ones}{\mathbf 1}
\newcommand{\reals}{\mathbb{R}}
\newcommand{\integers}{\mathbb{Z}}
\newcommand{\Rank}{\mathop{\bf rank}}
\newcommand{\diag}{\mathop{\bf diag}}
\newcommand{\dom}{\mathop{\bf dom}}
\newif\ifpagenumbering
\newsavebox{\theorembox}
\newsavebox{\lemmabox}
\newsavebox{\corollarybox}
\newsavebox{\remarkbox}
\newsavebox{\assbox}
\savebox{\theorembox}{\noindent\bf Theorem}
\savebox{\lemmabox}{\noindent\bf Lemma}
\savebox{\corollarybox}{\noindent\bf Corollary}
\savebox{\remarkbox}{\noindent\bf Remark}
\savebox{\assbox}{\noindent\bf Assumption}
\newtheorem{assumption}{\usebox{\assbox}}
\def\fprod#1{\left\langle#1\right\rangle}
\def\T{\mathsf{T}}
\def\id{\mathbf{I}}
\def\py{\pmb{y}}
\def\pxi{\pmb{\xi}}
\def\pt{\pmb{\theta}}
\def\pe{\pmb{\eta}}
\def\pss{\pmb{s}}
\def\one{\mathbf{1}}
\def\zer{\mathbf{0}}
\def\st{\mathrm{s.t.}}
\journalname{Computational Optimization and Applications}
\begin{document}

\title{A Parallelizable Dual Smoothing Method for Large Scale Convex Regression Problems\thanks{A preliminary version of this work~\cite{aybat2014parallel} has appeared in the Proceedings of the IEEE Conference on Decision and Control.}
}

\titlerunning{A Dual Smoothing Method for Convex Regression}        

\author{Necdet Serhat Aybat         \and
        Zi Wang 
}

\authorrunning{Aybat and Wang} 

\institute{N. S. Aybat \at
              Industrial and Manufacturing Engineering Dept., Penn State University\\
              University Park, PA 16802, USA
              \email{nsa10@psu.edu}           
           \and
           Z. Wang \at
           Industrial and Manufacturing Engineering Dept., Penn State University\\
           University Park, PA 16802, USA
           \email{zxw121@psu.edu}
}

\date{Date: 08/07/2016}

\maketitle

\begin{abstract}
\vspace*{-6mm}
Convex regression~(CR)
is an approach for fitting a convex function to a finite number of observations. It 
arises in various applications from diverse fields such as statistics, operations research, economics, and electrical engineering. The least squares~(LS) estimator, which can be computed via solving a quadratic program~(QP), is an intuitive method for convex regression with already established strong theoretical guarantees.
On the other hand, since the number of constraints in the QP formulation increases quadratically in the number of observed data points, the QP 
quickly becomes impractical to solve using traditional interior point methods. To address this issue, we propose a first-order method based on dual smoothing that carefully manages the memory usage through parallelization in order to efficiently compute the LS estimator in practice for large-scale CR instances.
\keywords{Convex regression \and Tikhonov regularization \and Dual smoothing \and Parallel method \and First-order method \and active set method \and ADMM}
\vspace*{-3mm}
\end{abstract}
\section{Introduction}
\emph{Convex regression}~(CR) problem 
deals with fitting a convex function to a given finite set of location/observation pairs, where each pair consists of a vector of independent variables and corresponding scalar dependent variable. In particular, suppose $N$ location/observation pairs are given $\{(\bar{x}_\ell,\bar{y}_\ell)\}_{\ell=1}^N\subset\mathbb{R}^n\times\mathbb{R}$ satisfying \vspace*{-4mm}
\begin{align}
\label{data}
\bar{y}_\ell=f_0(\bar{x}_\ell)+\varepsilon_\ell,\quad \ell=1,\ldots,N,
\end{align}
where $f_0: \mathbb{R}^n \rightarrow \mathbb{R}$ is a convex function, and $\varepsilon_\ell$ is a random noise with $E[\varepsilon_\ell]=0$ for all $\ell$. The objective is to infer the convex function $f_0$ from the noisy observations $\{(\bar{x}_\ell,\bar{y}_\ell)\}_{\ell=1}^N$. 
CR problems arise in various applications coming from diverse fields such as statistics, operations research, economics, and electrical engineering. 
M. Mousavi~\cite{mousavi2013shape} used 
CR to estimate the value function for Markov chains with expected infinite-horizon discounted rewards, which naturally arises in various control problems, and estimating value functions is essential for approximate dynamic programming and applied probability. In economics, CR has been adopted for approximating consumers' concave utility functions from empirical data~\cite{meyer1968consistent}. 
Moreover, in queueing network context, when the expectation of a performance measure is convex in model parameters -- see
~\cite{chen2001fundamentals}, then using Monte Carlo methods to compute the expectation gives rise to a CR problem~\cite{lim2012consistency}.

CR was first studied in~\cite{hildreth1954point} for estimating concave production functions. Later, various solution methods were proposed in the uni-variate setting, e.g.,~\cite{dent1973note,birke2007estimating,shively2011nonparametric}. The problem of fitting a convex function in the multi-variate setting has been considered in~\cite{holloway1979technical,kuosmanen2008representation} where the minimization of the least squares~(LS) error subject to the first-order convexity shape constraints is studied; furthermore, \cite{aguilera2008approximating,aguilera2009convex} also considered the same approach with additional second-order convexity constraints. The most well-known method for CR is to solve the LS problem,\vspace*{-5mm}
\begin{equation}
\label{infinite_dim}
\hat{f}_N=\argmin_{f\in\cC} \sum\limits_{\ell=1}^{N} \big( f(\bar{x}_\ell ) - \bar{y}_\ell \big)^2, \vspace*{-1mm}
\end{equation}
where $\cC\triangleq\{f:\mathbb{R}^n\rightarrow\mathbb{R} \hbox{ such that } f \hbox{ is convex}\}$. This \emph{infinite} dimensional problem 
is equivalent to a finite dimensional quadratic problem~(QP) given in~\eqref{original} -- see Proposition~1 in~\cite{lim2012consistency}, \vspace*{-2mm}
{\small
\begin{equation}
\label{original}
\min_{\substack{y_\ell\in\reals,~\xi_\ell\in\reals^n\\ \ell=1,\ldots,N}}\left\{\sum\limits_{\ell=1}^{N}   \big| y_\ell -\bar{y}_\ell \big|^2:\ {y}_{\ell_2} - y_{\ell_1} + {\xi_{\ell_1}}^\top (\bar{x}_{\ell_1}-\bar{x}_{\ell_2})\geq 0, \quad  1\leq \ell_1 \neq \ell_2\leq N\right\}.
\end{equation}}%
Indeed, let $\{(y_\ell^*,\xi_\ell^*)\}_{\ell=1}^N$ be an optimal solution to \eqref{original}, it is easy to show that when $N \geq n + 1$, $\{y_\ell^*\}_{\ell=1}^N$ is unique, $\hat{f}_N(\bar{x}_\ell)=y_\ell^*$ and $\xi_\ell^*\in\partial\hat{f}_N(\bar{x}_\ell)$ for all $\ell$, where $\partial$ denotes the subdifferential operator. The theoretical behavior of the LS estimator has been studied thoroughly in the past 50 years. In the \emph{univariate} setting, i.e., $n=1$, the consistency of the LS estimator is proved in~\cite{hanson1976consistency}; and the convergence rate of the estimator is established in~\cite{mammen1991nonparametric}. Groeneboom et al.~\cite{groeneboom2001estimation} extended these results and derived the asymptotic distribution of LS estimator at a fixed point of positive curvature. In the \emph{multivariate} setting, the consistency 
is shown in~\cite{lim2012consistency}, i.e., $\hat{f}_N\rightarrow f_0$ almost surely as $N$ increases.

Besides LS estimator, there are other methods for solving 
CR problem in the multivariate setting. A heuristic approach is proposed in~\cite{magnani2009convex} to compute locally optimal fits, which has no convergence guarantee. A convex adaptive partitioning~(CAP) method is proposed in~\cite{hannah2011approximate}, which creates a globally convex regression model via computing locally linear fits on adaptively selected covariate partitions. Both methods use the piecewise linear model, and minimize the least square error. In addition, more recently, Hannah and Dunson~\cite{hannah2012ensemble} proposed a new estimator based on using traditional ensemble methods to average over multiple piecewise linear estimators, and proved its consistency when CAP is the underlying estimator. However, LS estimator has some significant advantages over the methods mentioned above. First, LS estimator is a non-parametric regression method as discussed in \cite{Sen11}, which does not require any tuning parameters and avoids the issue of selecting an appropriate estimation structure; however, as also pointed out in~\cite{mousavi2013shape}, the methods proposed in~\cite{hannah2011approximate,hannah2013multivariate} are semi-parametric, and require adjusting several parameters before fitting a convex function. Second, LS estimator can be computed by solving the QP in \eqref{original}; therefore, at least in theory, it can be solved very efficiently using interior point methods (IPM). A major drawback of the LS estimator in practice is that the number of shape constraints in \eqref{original} is $\mathcal{O}(N^2)$. Consequently, the problem quickly becomes massive even for moderate number of observations: for off-the-shelf IPMs that do not exploit any structural properties of \eqref{original}, the complexity of each factorization step is $\mathcal{O}(N^3(n+1)^3)$, 
and the memory requirement is $\cO \big( N^2(n+1)^2 \big)$ assuming Cholesky factors are stored - see \cite{Boyd04,Nocedal2006} -- for more detailed discussion on memory usage and computational complexity of both IPM and our proposed method (exploiting the structure), see~Section~\ref{sec:complexity}.\vspace*{-1mm}

\vspace*{-2mm} In this paper, we propose a new \emph{parallelizable} method for computing the LS estimator on large-scale CR problems. The proposed method can efficiently solve large-scale instances of \eqref{original} by carefully managing the memory usage through parallelization, and exploiting the underlying problem structure. 
In particular, the proposed method, P-APG, is based on dual smoothing, i.e., regularizing the objective in~\eqref{original} with a strongly convex function. More specifically, we adopted Tikhonov regularization, which leads to a differentiable dual function with a Lipchitz continuous gradient. Compared to the traditional dual decomposition methods, the dual smoothing based approaches can guarantee feasibility of primal iterate sequence in the limit. To briefly summarize, P-APG is an iterative method to solve the regularized QP problem in \eqref{regularize} through solving a number of small-size QPs in each iteration.
In 
our main results, Theorem~\ref{bound} and \ref{thm:xi-bound}, we establish error bounds on the quality of inexact solutions to the regularized problem; particularly, we investigate how well the inexact solutions can approximate i) function values of the LS estimator, i.e., $\hat{f}_N(\bar{x}_\ell)$, and ii) subgradients from the subdifferential of the LS estimator, i.e., $\partial \hat{f}_N(\bar{x}_\ell)$. Next, we study the convergence behavior of P-APG to compute these function value and subgradient approximations. In Section~\ref{sec:cont-method}, we show that using a \emph{continuation} method, we can construct an iterate sequence that is asymptotically optimal to the original LS problem in \eqref{original} with a provable convergence rate. We adopted a primal-dual IPM to solve the small-size QP subproblems arising in P-APG iterations, and analyzed the computational complexity of an P-APG iteration by exploiting the special structure of the constraints and the objective function. In the rest, 
as alternatives to P-APG, we examined how an active set method~(ASM) can be efficiently implemented to solve \eqref{original}, and briefly discussed a recently proposed ADMM algorithm~\cite{mazumder2015computational} for 
\eqref{original}.
Finally, we conclude with a number of numerical examples comparing P-APG, ASM, and ADMM. Our results show that P-APG is the method of choice for large $N$. 
\paragraph{Notations:} 
Throughout, i.i.d. is short for independent and identically distributed. 
$\id_n$ denotes the $n\times n$-identity matrix. Given $x\in\reals^n$, $(x)_{+}\triangleq\max\{ x,\ 0\}$ and $(x)_{-}\triangleq\min\{ x,\ 0\}$; hence, $x=(x)_++(x)_-$. For $x,y\in\reals^n$, $\fprod{x,y}\triangleq x^\T y$ represents the standard inner product. $\mathbf{1}$ denotes the vector of all ones, and $e_i\in\reals^n$ denotes the $i$-th unit vector for each $i\in\{1,\ldots,n\}$.
\section{Methodology}
Let $f_0:\mathbb{R}^n\rightarrow\mathbb{R}\cup\{+\infty\}$ be the \emph{unknown} proper convex function generating the observed data $\{(\bar{x}_\ell,\bar{y}_\ell)\}_{\ell=1}^N\subset\mathbb{R}^n\times\mathbb{R}$ as in \eqref{data}, and 
let $\cN:=\{1,\ldots,N\}$ denote the set of indices corresponding to $N$ observations. Suppose $B_x>0$ such that $\norm{\bar{x}_\ell}_2\leq B_x$ for all $\ell\in\cN$. Define the long-vector notations for the variables: $\py=[y_\ell]_{\ell\in\cN}\in\reals^N$, and $\pxi=[\xi_\ell]_{\ell\in\cN}\in\reals^{Nn}$.

Consider \eqref{original} in the following compact form:
\begin{align}
\label{original_compact}
\chi^*\triangleq\argmin_{\pmb{y}\in\reals^N,~\pmb{\xi}\in\reals^{Nn}}\left\{\tfrac{1}{2} \left\| \pmb{y} - \bar{\pmb{y}} \right\| _2^2:\ A_1~\pmb{y} +  A_2~\pmb{\xi}\geq 0\right\},
\end{align}
where $A_1\in\mathbb{R}^{N(N-1)\times N}$ and $A_2\in\mathbb{R}^{N(N-1)\times Nn}$ are the matrices corresponding to constraints in \eqref{original}. Let $(\pmb{y}^*, \pmb{\xi}^* )$ be the least-norm optimal solution in $\chi^*$, i.e.,
\begin{align}
\label{tikhonov}
(\pmb{y}^*, \pmb{\xi}^* ) \triangleq \argmin\limits_{\pmb{y},~\pmb{\xi}} \left\{   \tfrac{1}{2} \big\| \pmb{y} \big\| _2^2 + \tfrac{1}{2} \big\| \pmb{\xi} \big\| _2^2:\  (\pmb{y}, \pmb{\xi} ) \in \chi^*  \right\}.
\end{align}
It is easy to show that $\pmb{y}^*$ is unique to~\eqref{original_compact}, i.e., if $(\pmb{y}, \pmb{\xi})\in\chi^*$, then $\pmb{y}=\pmb{y}^*$ -- see Proposition~1 in~\cite{lim2012consistency}. Hence, it follows from \eqref{tikhonov} that $\pmb{\xi}^*$ has the least norm
, i.e., for all $(\pmb{y}, \pmb{\xi})\in\chi^*$, one has $\norm{\pmb{\xi}}_2\geq\norm{\pmb{\xi}^*}_2$. 
Moreover, since \eqref{original_compact} is a convex QP, strong duality holds, and an optimal dual solution $\pmb{\theta}^*\in\mathbb{R}^{N(N-1)}$ exists. 

Note for each $(\ell_1,\ell_2)\in\cP\triangleq\{(\ell_1,\ell_2)\in\cN\times\cN:\ \ell_1\neq\ell_2\}$, there is a constraint in \eqref{original}, i.e., ${y}_{\ell_2} - y_{\ell_1} + {\xi_{\ell_1}}^\top (\bar{x}_{\ell_1}-\bar{x}_{\ell_2})\geq 0$ corresponds to $(\ell_1,\ell_2)\in\cP$. In order to fix 
$A_1$ and $A_2$, we sort the rows according to increasing lexicographic order on the index set $\cP$, i.e., the row for the constraint corresponding to $(\ell_1,\ell_2)$ comes before than the one corresponding to $(\ell_3,\ell_4)$ if either $\ell_1<\ell_3$, or $\ell_2<\ell_4$ in case $\ell_1=\ell_3$. Next, we give explicit forms for $A_1$ and $A_2$.
\begin{definition}
\label{def:A1A2}
Let $T_\ell\in\reals^{N-1\times N}$ such that $T_\ell=[e_1 \cdots e_{\ell-1}~-\one~e_\ell \cdots e_{N-1}]$ for $\ell\in\cN$, where $e_j\in\reals^{N-1}$ is the $j$-th unit vector for $j\in\{1,\ldots,N-1\}$. Moreover, let $\bar{X}\in\reals^{N\times n}$ such that $\bar{X}=[\bar{x}_\ell^\top]_{\ell\in\cN}$, i.e., $\{\bar{x}_\ell\}_{\ell\in\cN}$ are the rows of $\bar{X}$. Then $A_1=[T_\ell]_{\ell\in\cN}$, obtained by vertically concatenating $\{T_\ell\}_{\ell\in\cN}$, and $A_2=\diag\left(\{-T_\ell\bar{X}\}_{\ell\in\cN}\right)$ is a block-diagonal matrix as given below
\begin{equation}
\label{eq:A}
A_1=
\left(
  \begin{array}{c}
  T_1\\
  T_2\\
  \vdots\\
  T_N\\
  \end{array}
\right),\quad
A_2=
\left(
  \begin{array}{cccc}
    X_1 & \mathbf{0} & \cdots & \mathbf{0} \\
    \mathbf{0} & X_2 & \cdots & \mathbf{0} \\
    \vdots & \vdots & \ddots & \vdots \\
    \mathbf{0} & \mathbf{0} & \cdots & X_N \\
  \end{array}
\right),\quad
X_\ell\triangleq-T_\ell\bar{X},\quad \ell\in\cN. \vspace*{-4mm}
\end{equation}
\end{definition}
\vspace*{-2mm}
\subsection{Separability}
Given the regularization parameter $\gamma\geq 0$, consider
\begin{align}
\label{regularize}
(\py^*_\gamma,\pxi^*_\gamma)\triangleq\argmin_{\pmb{y},~\pmb{\xi}} \left\{r_\gamma(\pmb{y},\pmb{\xi})\triangleq\frac{1}{2} \left\| \pmb{y} - \bar{\pmb{y}} \right\| _2^2 + \frac{\gamma}{2} \left\| \pmb{\xi} \right\| _2^2:\ A_1\,\pmb{y} +  A_2 \, {\pmb{\xi}}\geq 0\right\}.
\end{align}
Note simply setting $\gamma=0$ in \eqref{regularize}, we obtain the original problem~\eqref{original_compact}.

To reduce the \textit{curse of dimensionality} and develop a 
\emph{parallelizable} method that can solve problems in~\eqref{original_compact} and \eqref{regularize} for large $N$, we employ dual decomposition to induce \emph{separability}. To this aim, we \emph{partition} the observation set into $K$ subsets $\{\cC_i\}_{i\in\cK}$, where $\cK\triangleq\{1,\ldots,K\}$ denote the set of indices corresponding to $K$ subsets of $\cN$. In particular, we choose $\{\cC_i\}_{i\in\cK}$ as a partition of $\cN$ such that $|\cC_i|\geq n+1$ for all $i$. To simplify the notation, throughout the paper we make the following assumption.
\begin{assumption}
\label{assump:partition}
Suppose $N=K \bar{N}$ for some $\bar{N}>n+1$, and without loss of generality assume that $\cC_i\triangleq \big \{(i-1)\bar{N}+1,~(i-1)\bar{N}+2,\ldots,~i\bar{N} \big\}$ for $i\in\cK$.
\end{assumption}
Throughout the paper, for each $i\in\cK$, let $\pmb{y}_i\in\mathbb{R}^{\bar{N}}$ and $\pmb{\xi}_i\in\mathbb{R}^{\bar{N}n}$ denote the sub-vectors of $\pmb{y}\in\mathbb{R}^{N}$ and $\pmb{\xi}\in\mathbb{R}^{Nn}$ corresponding to indices in $\cC_i$, respectively. In particular, for all $i\in\cK$, $\pmb{y}_i=[y_\ell]_{\ell\in\cC_i}$ and $\pmb{\xi}_i=[\xi_\ell]_{\ell\in\cC_i}$. Similarly, we define the same long-vectors for the observation data: $\bar{\pmb{y}}_i=[\bar{y}_\ell]_{\ell\in\cC_i}\in\reals^{\bar{N}}$.
\begin{definition}
\label{eq:submatrices}
Define $\cP\triangleq\{(\ell_1,\ell_2)\in\cN\times\cN:\ \ell_1\neq\ell_2\}$ and $\cG\triangleq\{(i,j)\in\cK\times\cK:\ i \neq j\}$. For each $i\in\cK$, let $A_1^{ii}\in\reals^{\bar{N}(\bar{N}-1)\times N}$ and $A_2^{ii}\in\reals^{\bar{N}(\bar{N}-1)\times N n}$ be the submatrices of $A_1$ and $A_2$ such that they consist of the rows corresponding to row indices $(\ell_1,\ell_2)\in\cP$ for 
$\ell_1,\ell_2\in\mathcal{C}_i$. Similarly, for each 
$(i,j)\in\cG$, let ${A_1}^{ij}\in\reals^{\bar{N}^2\times N}$ and ${A_2}^{ij}\in\reals^{\bar{N}^2\times N n}$ be the submatrices of $A_1$ and $A_2$ consisting of the rows corresponding to indices $\{(\ell_1,\ell_2)\in\cP:\ \ell_1\in\cC_i,~\ell_2\in\cC_j\}$.

Furthermore, for each $i\in\cK$, let $\bar{A}_1^{ii}\in\reals^{\bar{N}(\bar{N}-1)\times \bar{N}}$ and $\bar{A}_2^{ii}\in\reals^{\bar{N}(\bar{N}-1)\times \bar{N}n}$ be the submatrices of $A_1^{ii}$ and $A_2^{ii}$ such that
 $\bar{A}_1^{ii}$ consists of the columns of $A_1^{ii}$ corresponding to $\py_i$; and $\bar{A}_2^{ii}$ consists of the columns of $A_2^{ii}$ corresponding to $\pxi_i$.
\end{definition}
Note that for every ordered pair $(\ell_1,\ell_2)\in\cP$, there corresponds a constraint in \eqref{original}, which is represented by a row in matrices $A_1$ and $A_2$ of formulations \eqref{original_compact} and \eqref{regularize}. Consider all the constraints in \eqref{original} corresponding to those pairs $(\ell_1,\ell_2)$ such that they belong to different sets in the partition, i.e., $\ell_1\in\cC_{i}$, $\ell_2\in\cC_{j}$ for some $(i,j)\in\cG$, 
let $\pmb{\theta}_{ij}\in\mathbb{R}^{\bar{N}^2}$ denote the associated dual variables, 
and $\pmb{\theta}=[\pt_{ij}]_{(i,j)\in\cG}\in\reals^{\bar{N}^2K(K-1)}$ denote the vector formed by vertically concatenating $\pmb{\theta}_{ij}$ for $1\leq i\neq j\leq K$. By dualizing all such constraints in \eqref{regularize}, we form the \emph{partial} Lagrangian function:
\begin{align}
\label{eq:lagrangian}
\mathcal{L}_{\gamma} \left(\pmb{y}, \pmb{\xi}, \pmb{\theta}\right) \triangleq &
\frac{1}{2}\sum\limits_{i\in\cK}\left( \big\| \pmb{y}_i - \bar{\pmb{y}}_i\big\|_2^2+\gamma\left\| \pmb{\xi}_i \right\| _2^2\right) - \sum_{(i,j)\in\cG}
\fprod{\pmb{\theta}_{ij},  A_1^{ij} \py +A_2^{ij} \pxi }.
\end{align}
and obtain the following partial dual function
\begin{align}
\label{subgradient}
g_\gamma(\pmb{\theta})\triangleq \min\limits_{\: \pmb{y},~\pmb{\xi}}\left\{\mathcal{L}_\gamma \left(\pmb{y}, \pmb{\xi}, \pmb{\theta}\right):\  A_1^{ii}\py+A_2^{ii}\pxi\geq 0,\ i\in\cK\right\}.
\end{align}
Hence, the dual problem corresponding to \eqref{regularize} is given as
\begin{align}
\label{eq:dual-problem}
\Theta^*_\gamma\triangleq\argmax\{g_\gamma(\pmb{\theta}):\ \pmb{\theta}\geq 0\},\quad \hbox{and}\quad p^*_\gamma\triangleq g_\gamma(\pmb{\theta}^*_\gamma)\quad \hbox{for}\quad \pmb{\theta}^*_\gamma\in\Theta^*_\gamma.
\end{align}
Since strong-duality trivially holds between the primal-dual problem pair, \eqref{regularize} and \eqref{eq:dual-problem}, we have
\begin{align}
\label{eq:strong-duality}
p^*_\gamma=r_\gamma(\py^*_\gamma,\pxi^*_\gamma)=\tfrac{1}{2} \left\| \py^*_\gamma - \bar{\pmb{y}} \right\| _2^2 + \tfrac{\gamma}{2} \left\| \pxi^*_\gamma \right\| _2^2.
\end{align}
For any given regularization parameter $\gamma\geq 0$ and dual variable $\pt$, the partial Lagrangian function $\cL_\gamma$ is \emph{separable} in $\{(\py_i,\pxi_i)\}_{i\in\cK}$, and can be written as
\begin{align}
\label{eq:separable-form}
\cL_\gamma \left(\pmb{y}, \pmb{\xi}, \pmb{\theta}\right)= \sum_{i\in\cK} \cL^i_\gamma\left(\pmb{y}_i, \pmb{\xi}_i, \pmb{\theta}\right)
\end{align}
for some very simple quadratic function, $\cL^i_\gamma$, of $(\py_i,\pxi_i)$ for each $i\in\cK$. Moreover, after partially dualizing some of the constraints as shown in \eqref{eq:lagrangian}, the remaining ones in \eqref{subgradient} define a superset, $\cQ$, of the original feasible region. Indeed, $\cQ=\{(\py,\pxi): A_1^{ii}\py+A_2^{ii}\pxi\geq 0,\ i\in\cK\}=\{(\py,\pxi):\ \bar{A}_1^{ii}\py_i+\bar{A}_2^{ii}\pxi_i\geq 0,\ i\in\cK\}$ -- since the entries of $A_1^{ii}$ that does not belong to its submatrix $\bar{A}_1^{ii}$ are all 0; and similarly, the entries of $A_2^{ii}$ that does not belong to its submatrix $\bar{A}_2^{ii}$ are all 0 as well. Therefore, we have $\cQ=\bigotimes_{i\in\cK}\cQ_i$, where $\cQ_i\triangleq\{(\py_i,\pxi_i):\ \bar{A}_1^{ii}\py_i+\bar{A}_2^{ii}\pxi_i\geq 0\}$ for $i\in\cK$, and $\bigotimes$ denotes the Cartesian product. Consequently, since $\cL_\gamma$ is separable as shown in \eqref{eq:separable-form}, computing the partial dual function $g_\gamma(\pmb{\theta})$ in \eqref{subgradient} is equivalent to solving $K$ quadratic subproblems, i.e., one for each $i\in\cK$,
\begin{align}
\label{subgradient_split}
\min_{\pmb{y}_i\in\reals^{\bar{N}},~\pmb{\xi}_i\in\reals^{\bar{N}n}}\left\{\mathcal{L}^i_\gamma (\pmb{y}_i, \pmb{\xi}_i, \pmb{\theta}):\  \bar{A}_1^{ii}~\pmb{y}_i + \bar{A}_2^{ii}~\pmb{\xi}_i \geq 0\right\}.
\end{align}
Given the dual variables $\pmb{\theta}$, since all $K$ subproblems can be computed in \emph{parallel}, 
one can take advantage of the computing power of multi-core processors. 
In the rest of the paper, we discuss 
how to compute a solution to \eqref{original} via solving the dual problem: $\max\{g_\gamma(\pmb{\theta}):\ \pmb{\theta}\geq 0\}$.

\subsection{Projected Subgradient Method for Dual}
\label{sec:subgradient}
Clearly, for $\gamma=0$, $g_0$ defined in \eqref{subgradient} is the dual function for the original problem~\eqref{original_compact}; and the projected subgradient method can be adopted for solving the dual problem $\max\{g_0(\pmb{\theta}):\ \pmb{\theta}\geq 0\}$. 
Let $\pt=\mathbf{0}$, i.e., $\pmb{\theta}^{0}_{ij}=\pmb{0}$ for all 
$(i,j)\in\cG$. Given the $k$-th dual iterate $\pmb{\theta}^k$, let $( \pmb{y}^k, \pmb{\xi}^k )$ denote \emph{an} optimal solution to the minimization problem in \eqref{subgradient} when $\gamma=0$ and $\pmb{\theta}$ is set to $\pmb{\theta}^k$; 
and let $\pmb{\theta}_{ii}^k$ denote an optimal dual associated with constraints $A_1^{ii}~\pmb{y} + A_2^{ii}~\pmb{\xi} \geq 0$ in \eqref{subgradient}. The next dual iterate $\pmb{\theta}^{k+1}$ is computed for an appropriately chosen step size $t_k>0$:
\begin{align}
\pmb{\theta}_{ij}^{k+1}  =  {\textstyle \prod_{\cS_{ij}^k}} \left( \pmb{\theta}_{ij}^{k} - t_k\left(  A_1^{ij} \py^k +A_2^{ij} \pxi^k \right) \right),
\end{align}
where $\Pi_{\cS_{ij}^k}(.)$ denotes the Euclidean projection on to
$$\cS_{ij}^k \triangleq \Big\{ \pmb{\theta}_{ij} \geq \pmb{0}: {\pmb{\theta}_{ij}}^{\mathsf{T}} A_2^{ij} +  {\pmb{\theta}_{ii}^k}^{\mathsf{T}} A_2^{ii}  =\pmb{0} \Big\}.$$
Since the Lagrangian function $\cL_0$ is linear in $\pmb{\xi}$ when $\gamma=0$, $\dom g_0$ is non-trivial; hence the projection on to the Cartesian product $\bigotimes_{(i,j)\in\cG}\cS_{ij}^k$ ensures $\pt^{k+1}\in \dom g_0$.
The projected subgradient method is guaranteed to converge in function value for a diminishing step size sequence $\{ t_k \}_{k=1}^{\infty}$, and it requires $\mathcal{O}( 1/ \epsilon^2)$ iterations to obtain an $\epsilon$-optimal solution --see~\cite{Nesterov04_1B}. 
On the other hand, even if the dual iterates converge to an optimal dual solution $\pmb{\theta}^*$, the primal feasibility of the corresponding primal iterate sequence $\{( \pmb{y}^k, \pmb{\xi}^k )\}$ cannot be guaranteed in the limit as it might converge to a stationary point of the Lagrangian $\cL_0(\cdot,\cdot,\pmb{\theta}^*)$ that is primal infeasible, mainly due to lack of strict convexity, jointly in $(\pmb{y},\pmb{\xi})$, of the objective 
in \eqref{original_compact}.
\subsection{Tikhonov Regularization Approach}
In order to ensure feasibility in the limit, which cannot be guaranteed by the subgradient method discussed above, we employ Tikhonov regularization as in \eqref{regularize} for $\gamma>0$, of which convergence properties in general were investigated in \cite{engl1989convergence}. In particular, as $\gamma$ decreases to zero from above, the minimizer $(\py^*_\gamma,\pxi^*_\gamma)$, as a function of $\gamma$, converges to $(\pmb{y}^*, \pmb{\xi}^* )\in\chi^*$ defined in \eqref{tikhonov}, i.e., $\pmb{\xi}^*$ has the least norm among all $(\pmb{y}^*,\pmb{\xi})\in\chi^*$.
\begin{lemma}
\label{lem:continuity}
The minimizer of \eqref{regularize}, $\py^*_\gamma$, as a function of the regularization parameter $\gamma$, is H\"{o}lder  continuous from right at $\gamma=0$. In particular,
\begin{equation}
\label{holder_cond}
\norm{\py^*_\gamma - \pmb{y}^*}_2 \leq \norm{\pmb{\xi}^*}_2 \sqrt{\gamma},\quad \forall \gamma\geq 0.
\end{equation}
\end{lemma}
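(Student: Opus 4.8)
The plan is to combine two cross-feasibility observations with the $1$-strong convexity of the data-fidelity term $\py\mapsto\tfrac12\|\py-\bar{\py}\|_2^2$ in the $\py$-variable. The starting point is that the regularized problem \eqref{regularize} and the original problem \eqref{original_compact} share the \emph{same} feasible set $\{(\py,\pxi):\ A_1\py+A_2\pxi\geq 0\}$; only their objectives differ. Hence the least-norm optimizer $(\py^*,\pxi^*)$ from \eqref{tikhonov} is feasible for \eqref{regularize}, and the regularized optimizer $(\py^*_\gamma,\pxi^*_\gamma)$ is feasible for \eqref{original_compact}. I will treat $\gamma>0$; the case $\gamma=0$ is immediate since then $\py^*_0=\py^*$ by the uniqueness of the $\py$-part noted right after \eqref{tikhonov}, so \eqref{holder_cond} reads $0\le 0$.

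First I would derive an \emph{upper} bound on the fidelity gap. Using optimality of $(\py^*_\gamma,\pxi^*_\gamma)$ for \eqref{regularize} against the feasible competitor $(\py^*,\pxi^*)$ gives
\begin{equation*}
\tfrac12\|\py^*_\gamma-\bar{\py}\|_2^2+\tfrac{\gamma}{2}\|\pxi^*_\gamma\|_2^2\ \le\ \tfrac12\|\py^*-\bar{\py}\|_2^2+\tfrac{\gamma}{2}\|\pxi^*\|_2^2 .
\end{equation*}
Dropping the nonnegative term $\tfrac{\gamma}{2}\|\pxi^*_\gamma\|_2^2$ on the left yields $\tfrac12\|\py^*_\gamma-\bar{\py}\|_2^2-\tfrac12\|\py^*-\bar{\py}\|_2^2\le\tfrac{\gamma}{2}\|\pxi^*\|_2^2$.

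Next I would produce a matching \emph{lower} bound on the identical fidelity gap from strong convexity. Because the objective of \eqref{original_compact} depends on $\py$ alone, $\py^*$ minimizes the strongly convex $h(\py)\triangleq\tfrac12\|\py-\bar{\py}\|_2^2$ over the (convex) projection of the common feasible set onto the $\py$-coordinate, and $\py^*_\gamma$ belongs to that projection by the feasibility noted above. The first-order optimality condition then gives $\fprod{\py^*-\bar{\py},\ \py^*_\gamma-\py^*}\ge 0$, and expanding $h(\py^*_\gamma)$ about $\py^*$ produces
\begin{equation*}
h(\py^*_\gamma)-h(\py^*)=\fprod{\py^*-\bar{\py},\ \py^*_\gamma-\py^*}+\tfrac12\|\py^*_\gamma-\py^*\|_2^2\ \ge\ \tfrac12\|\py^*_\gamma-\py^*\|_2^2 .
\end{equation*}
Chaining this with the upper bound gives $\tfrac12\|\py^*_\gamma-\py^*\|_2^2\le\tfrac{\gamma}{2}\|\pxi^*\|_2^2$, and taking square roots delivers \eqref{holder_cond}.

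The one delicate point is the lower-bound step: I must justify that $\py^*$ is optimal for $h$ over the $\py$-projection of the feasible region (which follows from the $\pxi$-independence of the fidelity objective) and that the resulting variational inequality is legitimately applied to the competitor $\py^*_\gamma$ (which requires feasibility of $(\py^*_\gamma,\pxi^*_\gamma)$ for \eqref{original_compact}). Everything else reduces to a one-line algebraic manipulation of squared norms.
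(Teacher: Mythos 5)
Your proof is correct, and while it shares the paper's two key ingredients --- cross-feasibility (both problems have the same feasible set) and the optimality of $\pmb{y}^*$ tested at the point $\py^*_\gamma$ --- it assembles them differently. Note first that your ``projection onto the $\py$-coordinate'' detour is sound but unnecessary: since the $\pxi$-gradient of the objective of \eqref{original_compact} vanishes, your inequality $\fprod{\pmb{y}^*-\bar{\pmb{y}},\ \py^*_\gamma-\pmb{y}^*}\geq 0$ is exactly the second variational inequality in \eqref{optimality condition}, obtained by testing joint-space optimality of $(\pmb{y}^*,\pmb{\xi}^*)$ at the feasible competitor $(\py^*_\gamma,\pxi^*_\gamma)$. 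The real difference is on the regularized side: the paper also writes the first-order condition for \eqref{regularize}, sums the two variational inequalities, and then needs Cauchy--Schwarz together with the separately derived monotonicity $\norm{\pxi^*_\gamma}_2\leq\norm{\pmb{\xi}^*}_2$ to dispose of the cross term $\gamma\,{\pxi^*_\gamma}^{\mathsf{T}}(\pmb{\xi}^*-\pxi^*_\gamma)$; you instead use only the objective-value comparison $r_\gamma(\py^*_\gamma,\pxi^*_\gamma)\leq r_\gamma(\pmb{y}^*,\pmb{\xi}^*)$ and close the argument by sandwiching the fidelity gap between the strong-convexity lower bound $\tfrac{1}{2}\norm{\py^*_\gamma-\pmb{y}^*}_2^2$ and the upper bound $\tfrac{\gamma}{2}\norm{\pmb{\xi}^*}_2^2$. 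Your route is slightly more economical (no Cauchy--Schwarz, no separate bound on $\norm{\pxi^*_\gamma}_2$), whereas the paper's summed-VI route yields as byproducts the sharper intermediate estimate $\norm{\py^*_\gamma-\pmb{y}^*}_2^2\leq\gamma\left(\norm{\pmb{\xi}^*}_2^2-\norm{\pxi^*_\gamma}_2^2\right)$ and the norm monotonicity of $\pxi^*_\gamma$, which are of some independent interest. Your handling of the degenerate case $\gamma=0$ via uniqueness of the $\py$-part is also fine.
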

\proof
Let $\left(\py^*_\gamma,\pxi^*_\gamma\right)$ be the optimal solution to \eqref{regularize} and $\left(\pmb{y}^*,\pmb{\xi}^*\right)$ be defined as in \eqref{tikhonov}. Note that $(\pmb{y}^*,\pmb{\xi}^*)$ and $(\py^*_\gamma,\pxi^*_\gamma)$ are feasible to \eqref{regularize} and \eqref{original_compact}, respectively; hence, from the first-order optimality conditions of \eqref{regularize} and \eqref{original_compact}, we have
\begin{equation}
\label{optimality condition}
\begin{pmatrix} \py^*_\gamma - \bar{\pmb{y}} \\ \gamma~\pxi^*_\gamma \end{pmatrix} ^{\mathsf{T}} \begin{pmatrix} \pmb{y}^* - \py^*_\gamma\\ \pmb{\xi}^* - \pxi^*_\gamma \end{pmatrix} \geq 0, \quad
\begin{pmatrix} \pmb{y}^* - \bar{\pmb{y}} \\ \mathbf{0} \end{pmatrix} ^{\mathsf{T}} \begin{pmatrix} \py^*_\gamma - \pmb{y}^*  \\ \pxi^*_\gamma - \pmb{\xi}^* \end{pmatrix} \geq 0.
\end{equation}
Moreover, since $\left(\pmb{y}^*,\pmb{\xi}^*\right)$ and $\left(\py^*_\gamma,\pxi^*_\gamma\right)$ are optimal to \eqref{original_compact} and \eqref{regularize}, respectively; we also have
\begin{align*}
\tfrac{1}{2}\norm{\pmb{y}^* - \bar{\pmb{y}}}_2^2\leq\tfrac{1}{2}\norm{\py^*_\gamma - \bar{\pmb{y}}}_2^2,
\quad
\tfrac{1}{2}\norm{\py^*_\gamma-\bar{\pmb{y}}}_2^2+\tfrac{\gamma}{2}\norm{\pxi^*_\gamma}_2^2
\leq\tfrac{1}{2}\norm{\pmb{y}^* - \bar{\pmb{y}}}_2^2+\tfrac{\gamma}{2}\norm{\pmb{\xi}^*}_2^2.
\end{align*}
These two inequalities imply $\norm{\pxi^*_\gamma}_2\leq \norm{\pmb{\xi}^*}_2$. Finally, summing the two inequlities in \eqref{optimality condition} and using Cauchy-Schwarz, we obtain
\begin{align*}
\left\|  \py^*_\gamma -  \pmb{y}^* \right\|_2^2 \leq \gamma \, {\pxi^*_\gamma}^{\mathsf{T}} \big(  \pmb{\xi}^* - \pxi^*_\gamma \big)\leq\gamma \left(\norm{\pmb{\xi}^*}_2^2-\norm{\pxi^*_\gamma}_2^2\right)\leq\gamma \norm{\pmb{\xi}^*}_2^2,
\end{align*}
which implies the desired result.\qed
\endproof

\vspace{-0.2 cm}
Since the objective function in \eqref{regularize} is strongly convex, jointly in $\pmb{y}$ and $\pmb{\xi}$, when $\gamma>0$, Danskin's theorem (see~\cite{Bertsekas99}) implies that $g_\gamma$, i.e., the Lagrangian dual function corresponding to \eqref{regularize}, is differentiable; therefore, one can use gradient type methods to solve the corresponding dual problem $\max\{g_\gamma(\pmb{\theta}):\ \pmb{\theta}\geq 0\}$. Moreover, strong convexity ensures that, one can solve the regularized primal problem in \eqref{regularize} by solving the associated dual problem in \eqref{eq:dual-problem}. Indeed, let $\pt^*_\gamma$ be an optimal solution to \eqref{eq:dual-problem}, we can recover $(\py^*_\gamma,\pxi^*_\gamma)$ by computing the primal minimizers in \eqref{subgradient} when the dual is set to $\pt^*_\gamma$. In particular, achieving primal feasibility in the limit for the primal iterate sequence is not an issue provided that we can construct a dual iterate sequence that is asymptotically optimal to \eqref{eq:dual-problem}. We complete this section by formally stating this result. 
\begin{theorem}
\label{thm:rate}
Let $\gamma>0$, and $\{\pmb{\theta}^k\}$ be some dual sequence such that $\pmb{\theta}^k\geq 0$ for $k\geq 1$ and $\lim_{k\in\integers_+}g_\gamma(\pmb{\theta}^k)=p^*_\gamma$. Moreover, let $( \pmb{y}^k, \pmb{\xi}^k )$ denote the \emph{unique} optimal solution to the minimization problem in \eqref{subgradient} when $\pmb{\theta}$ is set to $\pmb{\theta}^k$ for $k\geq 1$. Then $(\py^*_\gamma,\pxi^*_\gamma)$ is the unique limit point of the primal sequence $\{( \pmb{y}^k, \pmb{\xi}^k )\}$. More specifically, for all $k\geq 1$, we have
\begin{align}
\label{eq:primal-convergence}
\norm{\pmb{y}^k-\py^*_\gamma}^2_2+\gamma\norm{\pmb{\xi}^k-\pxi^*_\gamma}_2^2\leq 2\left(p^*_\gamma-g_\gamma(\pmb{\theta}^k)\right)\rightarrow 0.
\end{align}
\end{theorem}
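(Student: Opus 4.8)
The plan is to exploit the joint strong convexity of the regularized objective, which passes directly to the partial Lagrangian $\cL_\gamma(\cdot,\cdot,\pt^k)$, and to combine the resulting quadratic-growth estimate with a weak-duality upper bound. The entire argument is driven by a single chain of inequalities, $g_\gamma(\pt^k)+\tfrac12\norm{\py^k-\py^*_\gamma}_2^2+\tfrac\gamma2\norm{\pxi^k-\pxi^*_\gamma}_2^2\le \cL_\gamma(\py^*_\gamma,\pxi^*_\gamma,\pt^k)\le p^*_\gamma$, from which \eqref{eq:primal-convergence} follows at once after multiplying by two and invoking the hypothesis $g_\gamma(\pt^k)\to p^*_\gamma$.

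First I would establish the left inequality. For fixed $\pt^k$, the map $(\py,\pxi)\mapsto\cL_\gamma(\py,\pxi,\pt^k)$ is a quadratic whose Hessian is the block-diagonal matrix $\diag(\id_N,\gamma\,\id_{Nn})$; in particular it is $1$-strongly convex in $\py$ and $\gamma$-strongly convex in $\pxi$. Since $(\py^k,\pxi^k)$ is the unique minimizer of this quadratic over the convex set $\cQ=\{(\py,\pxi):A_1^{ii}\py+A_2^{ii}\pxi\ge 0,\ i\in\cK\}$, an exact second-order Taylor expansion about $(\py^k,\pxi^k)$, together with the first-order optimality condition $\fprod{\nabla\cL_\gamma(\py^k,\pxi^k,\pt^k),\,(\py-\py^k,\pxi-\pxi^k)}\ge 0$ valid for every $(\py,\pxi)\in\cQ$, yields $\cL_\gamma(\py,\pxi,\pt^k)\ge g_\gamma(\pt^k)+\tfrac12\norm{\py-\py^k}_2^2+\tfrac\gamma2\norm{\pxi-\pxi^k}_2^2$ for all $(\py,\pxi)\in\cQ$, where I have used $\cL_\gamma(\py^k,\pxi^k,\pt^k)=g_\gamma(\pt^k)$. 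The key observation is that the regularized optimizer is admissible here: $(\py^*_\gamma,\pxi^*_\gamma)$ satisfies the full system $A_1\py+A_2\pxi\ge 0$ and hence in particular the within-block constraints defining $\cQ$, so $(\py^*_\gamma,\pxi^*_\gamma)\in\cQ$. Substituting it gives the left inequality.

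For the right inequality I would expand $\cL_\gamma(\py^*_\gamma,\pxi^*_\gamma,\pt^k)=r_\gamma(\py^*_\gamma,\pxi^*_\gamma)-\sum_{(i,j)\in\cG}\fprod{\pt^k_{ij},\,A_1^{ij}\py^*_\gamma+A_2^{ij}\pxi^*_\gamma}$. By strong duality \eqref{eq:strong-duality} the first term equals $p^*_\gamma$. For the sum, feasibility of $(\py^*_\gamma,\pxi^*_\gamma)$ for \eqref{regularize} forces $A_1^{ij}\py^*_\gamma+A_2^{ij}\pxi^*_\gamma\ge 0$ for every between-block pair $(i,j)\in\cG$, while the hypothesis gives $\pt^k_{ij}\ge 0$; hence each dualized inner product is nonnegative, and since the sum is subtracted we obtain $\cL_\gamma(\py^*_\gamma,\pxi^*_\gamma,\pt^k)\le p^*_\gamma$. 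Chaining the two inequalities and rearranging proves \eqref{eq:primal-convergence}; as its right-hand side tends to zero, $(\py^k,\pxi^k)\to(\py^*_\gamma,\pxi^*_\gamma)$, which is therefore the unique limit point.

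The only genuinely delicate point is the strong-convexity lower bound with the correct, \emph{non-uniform} moduli ($1$ for $\py$ and $\gamma$ for $\pxi$): one must resist collapsing to a single modulus $\min\{1,\gamma\}$, since the sharp constants are exactly what make the bound match the weighted left-hand side of \eqref{eq:primal-convergence}. Because $\cL_\gamma(\cdot,\cdot,\pt^k)$ is quadratic, the Taylor expansion is an identity rather than an inequality, so no regularity beyond convexity of $\cQ$ and optimality of $(\py^k,\pxi^k)$ is required; everything else reduces to bookkeeping of primal feasibility and the sign conditions on $\pt^k$.
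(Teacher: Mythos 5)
Your proof is correct and follows essentially the same route as the paper: an exact second-order expansion of the quadratic Lagrangian at $(\pmb{y}^k,\pmb{\xi}^k)$ combined with its first-order optimality over $Q$ gives the strong-convexity lower bound, while feasibility of $(\py^*_\gamma,\pxi^*_\gamma)$, nonnegativity of $\pmb{\theta}^k$, and strong duality give the upper bound $\cL_\gamma(\py^*_\gamma,\pxi^*_\gamma,\pmb{\theta}^k)\le p^*_\gamma$. The paper writes the same chain of inequalities in the reverse order, so there is no substantive difference.
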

\proof
Let $Q = \big\{ (\pmb{y},\pmb{\xi}):  A_1^{ii}~\pmb{y} +A_2^{ii}~\pmb{\xi} \geq  0,\quad i\in\cK \big\}$. Given $\pmb{\theta}^k\geq 0$ for any $k\geq 1$, since $\cL_\gamma(\pmb{y}, \pmb{\xi}, \pmb{\theta}^k)$ is a quadratic function in $(\pmb{y},\pmb{\xi})$, we can compute $\cL_\gamma(\py^*_\gamma,\pxi^*_\gamma, \pmb{\theta}^k)$ by using second-order Taylor expansion of around $(\pmb{y}^k, \pmb{\xi}^k)$:
\begin{eqnarray*}
\label{eq:taylor-expansion}
\lefteqn{\cL_\gamma(\py^*_\gamma,\pxi^*_\gamma,\pmb{\theta}^k)=}\\
& & \cL_\gamma(\pmb{y}^k, \pmb{\xi}^k, \pmb{\theta}^k)+
\begin{pmatrix}
\grad_{\pmb{y}}\cL_\gamma(\pmb{y}^k, \pmb{\xi}^k, \pmb{\theta}^k)\\
\grad_{\pmb{\xi}}\cL_\gamma(\pmb{y}^k, \pmb{\xi}^k, \pmb{\theta}^k)
\end{pmatrix}^\top
\begin{pmatrix}
\py^*_\gamma-\pmb{y}^k\\
\pxi^*_\gamma-\pmb{\xi}^k
\end{pmatrix}
+\frac{1}{2}
\begin{pmatrix}
\py^*_\gamma-\pmb{y}^k\\
\pxi^*_\gamma-\pmb{\xi}^k
\end{pmatrix}^\top
\begin{pmatrix}
\bI & \mathbf{0}\\
\mathbf{0}^\top & \gamma \mathbf{I}
\end{pmatrix}
\begin{pmatrix}
\py^*_\gamma-\pmb{y}^k\\
\pxi^*_\gamma-\pmb{\xi}^k
\end{pmatrix}.
\end{eqnarray*}
Note that $g_\gamma(\pmb{\theta}^k)=\cL(\pmb{y}^k, \pmb{\xi}^k,\pmb{\theta}^k)$, and since $(\py^*_\gamma, \pxi^*_\gamma)\in Q$, the first-order optimality condition for $( \pmb{y}^k, \pmb{\xi}^k )$ implies that the second term on the right-hand side of the above equality is non-negative. 
Therefore,
\begin{align*}
p_\gamma^* &\geq p_\gamma^* - \sum_{(i,j)\in\cG}
\fprod{\pmb{\theta}^k_{ij},\ A_1^{ij}
\py^*_\gamma +A_2^{ij}
\pxi^*_\gamma} \\ &= \cL_\gamma(\py^*_\gamma, \pxi^*_\gamma, \pmb{\theta}^k) \geq
g_\gamma(\pmb{\theta}^k)+\frac{1}{2}\left(\norm{\py^*_\gamma-\pmb{y}^k}_2^2+\gamma\norm{\pxi^*_\gamma-\pmb{\xi}^k}_2^2\right),
\end{align*}
where the first inequality above follows from $\pmb{\theta}^k\geq 0$ and
$
A_1^{ij}
\py^*_\gamma+A_2^{ij}
\pxi^*_\gamma
\geq 0$ for all 
$(i,j)\in\cG$ -- since $(\py^*_\gamma, \pxi^*_\gamma)$ satisfies all constraints in \eqref{regularize}.\qed
\endproof
\begin{corollary}
\label{cor:rate}
Let $\gamma=0$, and $\{\pmb{\theta}^k\}$ be some dual sequence such that $\pmb{\theta}^k\geq 0$ for $k\geq 1$ and $\lim_{k\in\integers_+}g_0(\pmb{\theta}^k)=p^*_0$, i.e., $p^*_0=\tfrac{1}{2}\norm{\pmb{y}^*-\bar{\pmb{y}}}_2^2$, where $\pmb{y}^*$ is the unique optimal solution defined in \eqref{tikhonov}. Moreover, let $( \pmb{y}^k, \pmb{\xi}^k )$ denote \emph{an} optimal solution to the minimization problem in \eqref{subgradient} when $\pmb{\theta}$ is set to $\pmb{\theta}^k$ for $k\geq 1$. $\pmb{y}^*$ is the unique limit point of the primal sequence $\{\pmb{y}^k\}$. More specifically, we have \vspace*{-1mm}
\begin{align}
\label{eq:primal-convergence-2}
\norm{\pmb{y}^k-\pmb{y}^*}^2_2\leq 2\left(p^*_0-g_0(\pmb{\theta}^k)\right)\rightarrow 0.
\vspace*{-3mm}
\end{align}
\vspace*{-3mm}
\end{corollary}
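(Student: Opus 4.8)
The plan is to replicate the argument of Theorem~\ref{thm:rate} almost verbatim with $\gamma=0$, carefully tracking the single place where the $\gamma>0$ reasoning relied on \emph{joint} strong convexity. Setting $\gamma=0$, the partial Lagrangian $\cL_0(\py,\pxi,\pt^k)$ is still quadratic in $(\py,\pxi)$, but its Hessian degenerates from $\mathrm{diag}(\id,\gamma\id)$ to $\mathrm{diag}(\id,\zer)$: the block acting on $\pxi$ vanishes. Consequently the subproblem~\eqref{subgradient} is strongly convex only in the $\py$-direction, which is exactly why the statement refers to \emph{an} optimal solution $(\py^k,\pxi^k)$ rather than the unique one, and why one can only hope to control $\{\py^k\}$ and not $\{\pxi^k\}$.

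First I would recall the relaxed region $Q=\{(\py,\pxi):A_1^{ii}\py+A_2^{ii}\pxi\ge0,\ i\in\cK\}$ from the proof of Theorem~\ref{thm:rate} and note that $(\py^*,\pxi^*)$ from~\eqref{tikhonov}, being optimal (hence feasible) for~\eqref{original_compact}, lies in $Q$. Next I would expand $\cL_0(\py^*,\pxi^*,\pt^k)$ in a second-order Taylor series about $(\py^k,\pxi^k)$. Because the Hessian is $\mathrm{diag}(\id,\zer)$, the quadratic remainder collapses to $\tfrac12\norm{\py^*-\py^k}_2^2$, with no $\pxi$-contribution. The first-order term is nonnegative: since $(\py^k,\pxi^k)$ minimizes $\cL_0(\cdot,\cdot,\pt^k)$ over $Q$ and $(\py^*,\pxi^*)\in Q$, the optimality condition gives $\nabla\cL_0(\py^k,\pxi^k,\pt^k)^\top\big((\py^*,\pxi^*)-(\py^k,\pxi^k)\big)\ge0$. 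Using $g_0(\pt^k)=\cL_0(\py^k,\pxi^k,\pt^k)$, these two facts yield $\cL_0(\py^*,\pxi^*,\pt^k)\ge g_0(\pt^k)+\tfrac12\norm{\py^*-\py^k}_2^2$.

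To close the bound from above I would use $\pt^k\ge0$ together with the feasibility of $(\py^*,\pxi^*)$ for every dualized constraint, i.e. $A_1^{ij}\py^*+A_2^{ij}\pxi^*\ge0$ for all $(i,j)\in\cG$, so that the dual penalty term in $\cL_0(\py^*,\pxi^*,\pt^k)$ is nonnegative and hence $\cL_0(\py^*,\pxi^*,\pt^k)\le\tfrac12\norm{\py^*-\bar{\py}}_2^2=p^*_0$. Chaining the two estimates gives $p^*_0\ge g_0(\pt^k)+\tfrac12\norm{\py^k-\py^*}_2^2$, which is exactly~\eqref{eq:primal-convergence-2}; letting $k\to\infty$ and invoking $g_0(\pt^k)\to p^*_0$ forces $\py^k\to\py^*$.

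I expect the only genuine subtlety -- the main obstacle -- to be the \emph{uniqueness of the limit point} claim in the absence of a unique subproblem solution. Because $\cL_0$ is merely linear (not strictly convex) in $\pxi$, the minimizer $(\py^k,\pxi^k)$ is not determined by $\pt^k$; one must therefore verify that the derived inequality holds for \emph{every} optimal solution of~\eqref{subgradient}, so that however $(\py^k,\pxi^k)$ is selected its $\py$-component obeys the same bound. This is what upgrades ``$\py^k\to\py^*$ along the chosen sequence'' to ``$\py^*$ is the unique limit point of $\{\py^k\}$.'' A secondary point worth recording is that the argument presumes $\pt^k\in\dom g_0$, so that the minimum in~\eqref{subgradient} is attained -- consistent with the projection onto $\cS_{ij}^k$ used in Section~\ref{sec:subgradient}; this is automatic here, since $(\py^k,\pxi^k)$ is assumed to exist.
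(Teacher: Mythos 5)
Your proposal is correct and follows essentially the same route as the paper: the paper states Corollary~\ref{cor:rate} without a separate proof precisely because it is obtained by repeating the proof of Theorem~\ref{thm:rate} with $\gamma=0$, where the Taylor remainder degenerates to $\tfrac12\norm{\py^*-\py^k}_2^2$ and the target point is the feasible pair $(\py^*,\pxi^*)\in Q$ from~\eqref{tikhonov}. Your added observation that the first-order optimality inequality holds at \emph{every} minimizer of~\eqref{subgradient}, so the bound is selection-independent, is a correct and worthwhile clarification of why the weaker hypothesis (``\emph{an} optimal solution'') still yields $\py^k\to\py^*$.
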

In the rest of the paper, we design methods based on dual decomposition to solve the convex regression problem in \eqref{original} or its regularized version in \eqref{regularize} when $N$ is large. Suppose $N$ is so large that solving either \eqref{original}, or \eqref{regularize} using IPM is infeasible due to high memory requirements caused by $\cO(N^2)$ shape constraints. In this scenario, using dual decomposition methods, including the methods proposed in this paper, reduces the memory overhead; but, this will come at the cost of considerable increase in the run time if a high-accuracy solution is desired. That being said, in many applications, low-to-moderate-accuracy approximate solutions usually have significant value to the practitioner; this is when dual decomposition based first-order methods become attractive. Therefore, it is important to understand how the approximation quality of iterate sequence $\{(\pmb{y}^k,\pmb{\xi}^k)\}$ changes as the algorithm runs, in order to better asses the trade of between memory requirement and convergence rate of the method chosen.

Our first objective is to study the rate of convergence in more detail. In particular, Corollary~\ref{cor:rate} implies that the projected subgradient method discussed in Section~\ref{sec:subgradient} guarantees $\norm{\pmb{y}^k-\pmb{y}^*}_2^2=\cO(1/\sqrt{k})$ rate. On the other hand, inspired by Nesterov's smoothing for solving structured non-smooth problems in~\cite{Nesterov05_1J}, we can improve the convergence rate. Indeed, combining the result of Lemma~\ref{lem:continuity} with Theorem~\ref{thm:rate} we see that the convergence rate in function values for the smoothed dual problem in \eqref{eq:dual-problem} implies 
$\pmb{y}^k\rightarrow\pmb{y}^*$, and an $\epsilon$-optimal solution $\pmb{y}_\epsilon$, i.e., $\norm{\pmb{y}_\epsilon-\pmb{y}^*}_2^2\leq \epsilon$, can be computed in $\cO(1/\epsilon)$ iterations.

Our second objective is to study the convergence behavior of $\{\pmb{\xi}^k\}$ sequence. As discussed before in Section~\ref{sec:subgradient}, when $\gamma=0$, using the projected subgradient method cannot guarantee the asymptotic feasibility of $\{(\pmb{y}^k, \pmb{\xi}^k)\}$; in particular, although $y^k_\ell\rightarrow y^*_\ell=\hat{f}_N(\bar{x}_\ell)$ for all $\ell\in\cN$, $\{\xi_\ell^k\}\subset\reals^n$ may not converge to a point in $\partial \hat{f}_N(\bar{x}_\ell)$ for some $\ell\in\cN$. This might be an issue to consider when designing algorithms for convex regression, as for some applications having error bounds on how $\{\xi_\ell^k\}$ approximates a subgradient at $\bar{x}_\ell$ might be as important as having error bounds on how $\{y^k_\ell\}$ approximates the function value at $\bar{x}_\ell$. For instance, when the objective is to fit concave utility functions to consumer data, subgradients can be used to infer consumers' marginal utilities.

These two objectives motivate the next section, where we briefly state a first-order algorithm to efficiently solve the smoothed dual problem in \eqref{subgradient}.
\begin{figure}[h]
\begin{framed}
\small
\textbf{Algorithm APG}\big($\theta^0$\big)\\
Iteration 0: Take $ \tilde{\theta}^1=\theta^0,\quad t_1=1$\\
Iteration $k$: ($k \geq 1$) Compute
\begin{enumerate}[label=\arabic*:~]
\item ${\theta^k}\gets  \Pi_{\cQ}\left( \tilde{\theta}^k+\tfrac{1}{L}\grad \rho(\tilde{\theta}^k) \right)$
\item $t_{k+1}\gets ( 1+ \sqrt{ 1+ 4t_k^2} )/2$
\item $\tilde{\theta}^{k+1}\gets \theta^k+ \frac{t_k -1}{t_{k+1}} \left(\theta^k-\theta^{k-1}\right)$
\end{enumerate}
\vspace*{-0.25 cm}
\end{framed}
\vspace*{-0.25 cm}
\caption{Accelerated Proximal Gradient Algorithm}
\label{fig:apg}
\end{figure}
\vspace*{-0.3 cm}
\subsection{Parallel Accelerated Proximal Gradient~(P-APG) Algorithm}
\label{sec:PAPG}
Let $\rho:\mathbb{R}^d\rightarrow\mathbb{R}$ be a concave function such that $\grad \rho$ is Lipschitz continuous on $\mathbb{R}^d$ with constant $L$, and $\cQ\subset\mathbb{R}^d$ be a convex set. Given an initial iterate $\theta^0$, let $\{\theta^k\}$ be the iterate sequence generated using the gradient ascent method as follows: $\theta^{k+1}=\theta^k+\grad \rho(\theta^k)/L$ for $k\geq 0$. According to Corollary~2.1.2 in~\cite{Nesterov04_1B}, the error bound is given by
\begin{align}
\label{eq:gd-rate}
0\leq\rho^*-\rho(\theta^k)\leq \frac{2L}{k+4} \norm{\theta^0-\theta^*}_2^2,
\end{align}
for all $k\geq 1$ and for any $\theta^*\in\argmin\{\rho(\theta):\ \theta\in\cQ\}$, where $\rho^*=\rho(\theta^*)$. On the other hand, the APG algorithm, \cite{Beck09,Tseng08}, displayed in Fig.~\ref{fig:apg} is based on Nesterov's accelerated gradient method~\cite{Nesterov04_1B,Nesterov05_1J}. 
Corollary~3 in~\cite{Tseng08}, and Theorem 4.4 in~\cite{Beck09} show that for all $k\geq 1$ the error bound for APG is given by
\begin{align}
\label{eq:apg-rate}
0\leq\rho^*-\rho(\theta^k)\leq \frac{2L}{(k+1)^2} \norm{\theta^0-\theta^*}_2^2,
\end{align}
where $\theta^0$ is the initial APG iterate and $\theta^*\in\argmin_{\theta\in\cQ} \rho(\theta)$. Hence, using APG one can compute an $\delta$-optimal solution within at most $\cO(\sqrt{L/\delta})$ APG iterations. Next, we will customize APG algorithm for solving 
\eqref{regularize} when $\gamma>0$.
\begin{definition}
\label{def:C}
Let $A_3$ and $A_4$ denote the matrices formed by vertically concatenating $A_1^{ij}$ and $A_2^{ij}$, respectively, for all 
$(i,j)\in\cG$. Define  
$C\triangleq\begin{bmatrix}  A_3 & A_4 
\end{bmatrix}$, the decision variable vector 
$\pmb{\eta}^{\mathsf{T}}\triangleq\begin{bmatrix} \pmb{y}^{\mathsf{T}} & \pmb{\xi}^{\mathsf{T}} \end{bmatrix}$,
and the following elements related to the regularized problem in \eqref{regularize}. For $i\in\cK$, $\cQ_i=\{(\py_i,\pxi_i):\ \bar{A}_1^{ii}~\pmb{y}_i +\bar{A}_2^{ii}~\pmb{\xi}_i \geq  0\}$ and \vspace*{-3mm}
\begin{align*}
Q \triangleq \big\{\pmb{\eta}=(\py,\pxi):\ (\py_i,\pxi_i)\in\cQ_i,\ i\in\cK\big\}.
\end{align*}
\end{definition}
Now, consider the equivalent representation of \eqref{regularize}:
\begin{align}
\label{omega_primal}
\min_{\pmb{\eta}\in Q} \quad \frac{1}{2} \norm{\pmb{y} -  \bar{\pmb{y}}}_2^2 +  \frac{\gamma}{2} \left\| \pmb{\xi} \right\| _2^2 \quad \text{s.t.} \quad  &C \,\pmb{\eta} \geq 0.
\end{align}
The objective function in \eqref{subgradient} for the dual problem in \eqref{eq:dual-problem}, i.e.,  $\max\{g_\gamma(\pmb{\theta}):\ \pmb{\theta}\geq 0\}$, can be written as
\begin{equation}
\label{eq:g_gamma}
g_\gamma(\pmb{\theta}) = \min\limits_{\pmb{\eta} \in Q }  \left\{ \frac{1}{2} \big\| \pmb{y} -  \bar{\pmb{y}} \big\| _2^2 + \frac{\gamma}{2} \left\| \pmb{\xi} \right\| _2^2 - \fprod{\pmb{\theta}, C \, \pmb{\eta}} \right\}.
\end{equation}
Theorem 7.1 in~\cite{nesterov2005excessive} and Danskin's theorem imply that
\begin{align}
\label{dual gradient}
\nabla g_\gamma(\pmb{\theta}) = - C \, \pmb{\eta}(\pmb{\theta}) ,
\end{align}
where $\pmb{\eta}(\pmb{\theta})$ is the unique minimizer in \eqref{eq:g_gamma}, and $\grad g_\gamma(\pmb{\theta})$  is Lipschitz continuous with constant $L_\gamma$ in~\eqref{eq:lischitz}, where $\norm{C}$ denotes the spectral norm of $C$.
\begin{align}
\label{eq:lischitz}
L_\gamma = \frac{1}{\gamma}~\sigma_{\max}^2(C)=\frac{1}{\gamma}~\norm{C}^2.
\end{align}
\begin{figure}[thpb]
\begin{framed}
{
\small
\textbf{Algorithm P-APG}\big($\gamma, \pmb{\theta}^0$\big) \\
Iteration 0: Set $\tilde{\pmb{\theta}}^1 = \pmb{\theta}^0, t_1=1 $ and $L_\gamma = \frac{1}{\gamma}~\sigma_{\max}^2(C)$\\
Iteration $k$: ($k \geq 1$) Compute
\vspace{-0.2cm}
\begin{enumerate}[label=\arabic*:~]
\item $\pmb{\eta}^k \gets \argmin\limits_{\pmb{\eta}\in Q }\left\{ \frac{1}{2} \big\| \pmb{y} -  \bar{\pmb{y}} \big\| _2^2 + \frac{\gamma}{2} \left\| \pmb{\xi} \right\| _2^2 - \fprod{C^\top\tilde{\pmb{\theta}}^k, \pmb{\eta}} \right\}$
\item $\pmb{\theta}^k\gets  \left( \tilde{\pmb{\theta}}^k-\frac{1}{L_\gamma}C\pmb{\eta}^k \right)_+$
\item $t_{k+1}\gets ( 1+ \sqrt{ 1+ 4t_k^2} )/2$
\item $\tilde{\pmb{\theta}}^{k+1}\gets \pmb{\theta}^k+ \frac{t_k -1}{t_{k+1}} \left(\pmb{\theta}^k-\pmb{\theta}^{k-1}\right)$
\end{enumerate}
\vspace{-0.25cm}
}
\end{framed}
\vspace{-0.25cm}
\caption{ Parallel APG Algorithm~(P-APG)}
\label{fig:papg}
\end{figure}

Parallel~APG algorithm~(P-APG), displayed in Fig.~\ref{fig:papg}, is the customized version of APG algorithm in Fig.~\ref{fig:apg} to solve \eqref{eq:dual-problem}. Note that the computation in Step-1 can be carried out in \emph{parallel} using $K$ processors, each solving a small-size QP. Later in Section~\ref{sec:complexity}, we discuss the computational complexity of one P-APG iteration in detail.
\paragraph{Adaptive Step Size Strategy:}One important property of APG methods is the ability to adopt an adaptive step-size sequence. Note $L_\gamma$, the Lipschitz constant of $\grad g_\gamma(\theta)$, may not be known in advance or may be too conservative in practice -- leading to very small steps. Instead of constant step size $1/L_\gamma$ in Step-2~of P-APG, if one uses an adaptive step sequence $\{1/s_k\}$, the $\cO(L_\gamma/k^2)$ rate shown in~\cite{Beck09} still holds as long as
\begin{align}
\label{eq:adaptive-check}
 g_\gamma (\pmb{\theta}^k) \geq g_\gamma(\tilde{\pmb{\theta}}^k)+\fprod{\grad g_\gamma(\tilde{\pmb{\theta}}^k),~  \pmb{\theta}^k- \tilde{\pmb{\theta}}^k }-\frac{s_k}{2}\norm{ \pmb{\theta}^k - \tilde{\pmb{\theta}}^k }_2^2,
\end{align}
holds for all $k$ where $\pmb{\theta}^k$ is computed using $s_k$ instead of $1/L_\gamma$. Clearly, one can choose $s^k \leq L_\gamma$; possibly take longer steps compared to constant step size $1/L_\gamma$ and still has a convergence guarantee with the same rate. We adopted the following rule in our numerical tests: let $\upsilon>1$, for $k\geq 1$ we set $s_{k}=s_{k-1} \upsilon^{\ell_k-1}$ where $\ell_k\geq 0$ is the smallest integer such that \eqref{eq:adaptive-check} holds, and $s_{0}=L_\gamma$.

In the rest of the paper, other than the numerical section, for the sake of simplicity we assume $s_k=L_\gamma$ for all $k$. 
To better understand the convergence rate of P-APG, 
next, we provide a bound on $\norm{\pmb{\theta}^*_\gamma}_2$ for all $\pmb{\theta}^*_\gamma\in\Theta^*_\gamma$.
\begin{lemma}
\label{lem:dual-bound}
Given $\gamma\geq 0$ and $\delta\geq 0$, let $\pmb{\theta}_{\gamma,\delta}\geq 0$ be a $\delta$-optimal solution to \eqref{eq:dual-problem}, i.e., $0\leq p_\gamma^*-g_\gamma(\pmb{\theta}_{\gamma,\delta})\leq \delta$. Given $\{(\bar{x}_\ell,\bar{y}_\ell)\}_{\ell\in\cN}$, for all $\ell\in\cN$, define $\tilde{y}_\ell\triangleq\hat{y}+\tfrac{\alpha}{2}\norm{\bar{x}_\ell-\hat{x}}_2^2$ for some given $\alpha>0$, where $\hat{x}\triangleq\frac{1}{N}\sum_{\ell\in\cN}\bar{x}_\ell$ and $\hat{y}\triangleq\frac{1}{N}\sum_{\ell\in\cN}\bar{y}_\ell$. Then
\begin{align}
\label{eq:theta-norm-bound}
\norm{\pmb{\theta}_{\gamma,\delta}}_1\leq \frac{2}{\alpha\upsilon}\Big(\delta-p^*_\gamma+\tfrac{1}{2}\sum_{\ell\in\cN}\left(\tilde{y}_\ell-\bar{y}_\ell\right)^2+\gamma\alpha^2\norm{\bar{x}_\ell-\hat{x}}_2^2\Big)\triangleq B(\gamma,\delta,\alpha),
\end{align}
where $\upsilon\triangleq\min\limits_{(i,j)\in\cG}
\{\norm{\bar{x}_{\ell_1}-\bar{x}_{\ell_2}}_2^2:\ \ell_1\in\cC_i,\ \ell_2\in\cC_j\}$.
\end{lemma}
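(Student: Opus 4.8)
The plan is to exploit the representation of the dual value $g_\gamma(\pmb{\theta})$ in~\eqref{eq:g_gamma} as a minimization over $\pe\in Q$, and to bound $\norm{\pmb{\theta}_{\gamma,\delta}}_1$ by inserting a single, carefully chosen feasible primal point into that minimization. The dualized cross-block constraint residuals are precisely the entries of $C\pe$, and $\pmb{\theta}_{\gamma,\delta}\geq 0$ componentwise; hence a feasible point whose cross-block slacks are uniformly bounded below (in terms of $\upsilon$) converts the linear term $-\fprod{\pmb{\theta}_{\gamma,\delta},C\pe}$ into a bound of the form $-\tfrac{\alpha\upsilon}{2}\norm{\pmb{\theta}_{\gamma,\delta}}_1$, and this is what drives the estimate.

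The construction I would use is the convex quadratic surrogate $\phi(x)\triangleq\hat{y}+\tfrac{\alpha}{2}\norm{x-\hat{x}}_2^2$. Sampling $\phi$ and its gradient at the data gives $\tilde{y}_\ell=\phi(\bar{x}_\ell)$ (as in the statement) and $\tilde{\xi}_\ell\triangleq\grad\phi(\bar{x}_\ell)=\alpha(\bar{x}_\ell-\hat{x})$, collected into $\tilde{\pe}=(\tilde{\py},\tilde{\pxi})$. The key observation is that, since $\phi$ is convex, $(\tilde{\py},\tilde{\pxi})$ satisfies \emph{every} shape constraint in~\eqref{original}, so in particular $\tilde{\pe}\in Q$. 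Moreover, because $\phi$ is quadratic with Hessian $\alpha\id_n$, its Bregman gap is exact: for every ordered pair $(\ell_1,\ell_2)$,
$$\tilde{y}_{\ell_2}-\tilde{y}_{\ell_1}+\tilde{\xi}_{\ell_1}^\T(\bar{x}_{\ell_1}-\bar{x}_{\ell_2})=\tfrac{\alpha}{2}\norm{\bar{x}_{\ell_1}-\bar{x}_{\ell_2}}_2^2.$$
Thus each cross-block entry of $C\tilde{\pe}$, corresponding to $\ell_1\in\cC_i,\ \ell_2\in\cC_j$ with $(i,j)\in\cG$, equals $\tfrac{\alpha}{2}\norm{\bar{x}_{\ell_1}-\bar{x}_{\ell_2}}_2^2\geq\tfrac{\alpha\upsilon}{2}$ by the definition of $\upsilon$.

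With $\tilde{\pe}\in Q$ in hand I would chain three inequalities. First, $\delta$-optimality gives $g_\gamma(\pmb{\theta}_{\gamma,\delta})\geq p^*_\gamma-\delta$. Second, using $\tilde{\pe}$ as a suboptimal candidate in~\eqref{eq:g_gamma},
$$g_\gamma(\pmb{\theta}_{\gamma,\delta})\leq \tfrac{1}{2}\norm{\tilde{\py}-\bar{\py}}_2^2+\tfrac{\gamma}{2}\norm{\tilde{\pxi}}_2^2-\fprod{\pmb{\theta}_{\gamma,\delta},\,C\tilde{\pe}}.$$
Third, since $\pmb{\theta}_{\gamma,\delta}\geq 0$ and each cross-block entry of $C\tilde{\pe}$ is at least $\tfrac{\alpha\upsilon}{2}$, one gets $\fprod{\pmb{\theta}_{\gamma,\delta},\,C\tilde{\pe}}\geq\tfrac{\alpha\upsilon}{2}\norm{\pmb{\theta}_{\gamma,\delta}}_1$. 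Combining the three and substituting $\norm{\tilde{\pxi}}_2^2=\alpha^2\sum_{\ell\in\cN}\norm{\bar{x}_\ell-\hat{x}}_2^2$ yields, after rearranging, exactly $B(\gamma,\delta,\alpha)$.

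I expect the only genuinely nontrivial step to be the \emph{design} of the surrogate: recognizing that sampling a fixed convex quadratic at the data simultaneously (i) produces a point in $Q$ and (ii) makes all cross-block slacks explicitly computable and uniformly bounded below by $\tfrac{\alpha}{2}$ times $\upsilon$. Everything afterward is a mechanical weak-duality-style estimate; the remaining care is bookkeeping, namely verifying that $\fprod{\pmb{\theta}_{\gamma,\delta},C\tilde{\pe}}$ collapses to $\tfrac{\alpha\upsilon}{2}\norm{\pmb{\theta}_{\gamma,\delta}}_1$ (using componentwise nonnegativity of $\pmb{\theta}_{\gamma,\delta}$ together with the lower bound $\upsilon$ on the squared cross distances), and noting that the argument remains valid for $\gamma=0$, since the upper bound through $\tilde{\pe}$ is finite even where $g_0$ takes the value $-\infty$ off its domain.
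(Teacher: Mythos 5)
Your proposal is correct and follows essentially the same route as the paper's proof: the paper also builds the Slater point by sampling $h(x)=\hat{y}+\tfrac{\alpha}{2}\norm{x-\hat{x}}_2^2$ and its gradient at the data, notes $C\tilde{\pmb{\eta}}\geq\tfrac{\alpha\upsilon}{2}\mathbf{1}$, plugs $\tilde{\pmb{\eta}}$ into the minimization defining $g_\gamma(\pmb{\theta}_{\gamma,\delta})$, and closes with $\delta$-optimality. The only cosmetic difference is that you observe the Bregman gap of the quadratic is exactly $\tfrac{\alpha}{2}\norm{\bar{x}_{\ell_1}-\bar{x}_{\ell_2}}_2^2$ where the paper invokes strong convexity to get the same quantity as a lower bound.
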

\proof
For given $\alpha>0$, define $h:\reals^n\rightarrow\reals$ such that $h(x)\triangleq\hat{y}+\tfrac{\alpha}{2}\norm{x-\hat{x}}_2^2$. Note that for all $\ell\in\cN$, we have $\tilde{y}_\ell=h(\bar{x}_\ell)$, and $\tilde{\xi}_\ell\triangleq\grad h(\bar{x}_\ell)=\alpha(\bar{x}_\ell-\hat{x})$. Since $h$ is strongly convex with modulus $\alpha>0$, for any $(\ell_1,\ell_2)\in\cN\times\cN$, it follows that
\begin{align}
\label{eq:slater-point}
\tilde{y}_{\ell_2}-\tilde{y}_{\ell_1}+\fprod{\tilde{\xi}_{\ell_1},~ \bar{x}_{\ell_1}-\bar{x}_{\ell_2}}\geq\frac{\alpha}{2}\norm{\bar{x}_{\ell_2}-\bar{x}_{\ell_1}}_2^2\geq 0.
\end{align}
Let $\tilde{\pmb{\eta}}=[\tilde{\pmb{y}}^\top \tilde{\pmb{\xi}}^\top]^\top$ such that $\tilde{\pmb{y}}=[\tilde{\pmb{y}}_i]_{i\in\cK}$ and $\tilde{\pmb{\xi}}=[\tilde{\pmb{\xi}}_i]_{i\in\cK}$, where $\tilde{\pmb{y}}_i=[\tilde{y}_\ell]_{\ell\in\cC_i}$ and $\tilde{\pmb{\xi}}_i=[\tilde{\xi}_\ell]_{\ell\in\cC_i}$. Hence, $\tilde{\pmb{\eta}}\in Q$ is a Slater point for the problem in \eqref{regularize}, or equivalently \eqref{omega_primal}. Since
$C\tilde{\pmb{\eta}}\geq \frac{\alpha\upsilon}{2} \mathbf{1}> 0$, 
it follows from \eqref{eq:g_gamma} that
\begin{equation}
\frac{\alpha\upsilon}{2}\norm{\pmb{\theta}_{\gamma,\delta}}_1\leq\fprod{\pmb{\theta}_{\gamma,\delta},~C \tilde{\pmb{\eta}}}\leq \frac{1}{2}\norm{\tilde{\pmb{y}} -  \bar{\pmb{y}}}_2^2 +  \frac{\gamma}{2} \left\| \tilde{\pmb{\xi}} \right\| _2^2-g_\gamma(\pmb{\theta}_{\gamma,\delta}),
\end{equation}
and the result follows from $\delta$-optimality, i.e., $p_\gamma^*-g_\gamma(\pmb{\theta}_{\gamma,\delta})\leq \delta$.\qed
\endproof
\begin{remark}
\label{rem:dual_bounds}
When $\gamma=0$, for any $\pmb{\theta}^*_0\in\Theta^*_0$, it follows that
$$ \norm{\pmb{\theta}^*_0}_1\leq B(0,0,\alpha)=\frac{2}{\alpha\upsilon}\left(\tfrac{1}{2}\sum_{\ell\in\cN}(\tilde{y}_\ell-\bar{y}_\ell)^2-p^*_0\right)\leq \frac{1}{\alpha\upsilon}\sum_{\ell\in\cN}(\tilde{y}_\ell-\bar{y}_\ell)^2\triangleq B_\theta(\alpha).$$
Note $p_0^*\leq p_\gamma^*$ for all $\gamma\geq 0$; hence, when $\gamma>0$, for any $\pmb{\theta}^*_\gamma\in\Theta^*_\gamma$, it follows that
\begin{align*}
\norm{\pmb{\theta}^*_\gamma}_1\leq B(\gamma,0,\alpha)
&\leq B(0,0,\alpha)+\frac{\gamma\alpha}{\upsilon}\sum_{\ell\in\cN}\norm{\bar{x}_\ell-\hat{x}}_2^2\\
&\leq B_\theta(\alpha)+\frac{\gamma\alpha}{\upsilon}\sum_{\ell\in\cN}\norm{\bar{x}_\ell-\hat{x}}_2^2\triangleq B_\theta(\gamma,\alpha).
\end{align*}
\end{remark}
The bound on $\norm{\pmb{\theta}_{\gamma,\delta}}_1$ given in \eqref{eq:theta-norm-bound} holds for all $\alpha>0$. Therefore, by choosing $\alpha>0$ depending on $\gamma\geq 0$, we optimize the upper bounds $B_\theta(\alpha)$ and $B_\theta(\gamma,\alpha)$ defined in Remark~\ref{rem:dual_bounds}.
\begin{lemma}
\label{lem:opt-alpha}
Given $\gamma\geq 0$, let $\alpha^*_\gamma\triangleq\argmin\{B_\theta(\gamma,\alpha):\ \alpha>0\}$, and $\alpha^*\triangleq \alpha^*_0$ for $\gamma=0$, i.e., $\alpha^*=\argmin\{B_\theta(\alpha):\ \alpha>0\}$. For any $\gamma\geq 0$, $\alpha^*_\gamma$ can be computed as follows
\begin{equation}\label{eq:alpha-opt}
\alpha^*_\gamma=4~\left( \frac{\sum_{\ell\in\cN}(\tilde{y}_\ell-\hat{y})^2}{\sum_{\ell\in\cN}\norm{\bar{x}_\ell-\hat{x}}_2^2\big(\norm{\bar{x}_\ell-\hat{x}}_2^2+8\gamma\big)}\right)^{1/2},
\end{equation}
leading to tight upper bounds $B^*_\theta\triangleq B_\theta(\alpha^*)$ and $B^*_\theta(\gamma)\triangleq B_\theta(\gamma,\alpha^*_\gamma)$.
\end{lemma}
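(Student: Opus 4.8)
The plan is to treat this purely as a one\mbox{-}dimensional smooth minimization of the explicit scalar function $\alpha\mapsto B_\theta(\gamma,\alpha)$ introduced in Remark~\ref{rem:dual_bounds}, and to locate its minimizer on $(0,\infty)$ through the first\mbox{-}order optimality condition. First I would substitute the definition $\tilde{y}_\ell=\hat{y}+\tfrac{\alpha}{2}\norm{\bar{x}_\ell-\hat{x}}_2^2$ from Lemma~\ref{lem:dual-bound} into $B_\theta(\gamma,\alpha)=\tfrac{1}{\alpha\upsilon}\sum_{\ell\in\cN}(\tilde{y}_\ell-\bar{y}_\ell)^2+\tfrac{\gamma\alpha}{\upsilon}\sum_{\ell\in\cN}\norm{\bar{x}_\ell-\hat{x}}_2^2$, so that the entire $\alpha$\mbox{-}dependence becomes explicit. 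The essential point is that $\tilde{y}_\ell$ itself depends on $\alpha$; hence the numerator $\sum_\ell(\tilde{y}_\ell-\bar{y}_\ell)^2$ is a genuine quadratic in $\alpha$, and it would be a mistake to treat $\tilde{y}_\ell$ as a fixed constant when differentiating.

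Next I would expand the square using $\tilde{y}_\ell-\bar{y}_\ell=(\hat{y}-\bar{y}_\ell)+\tfrac{\alpha}{2}\norm{\bar{x}_\ell-\hat{x}}_2^2$. Writing $a_\ell\triangleq \hat{y}-\bar{y}_\ell$ and $b_\ell\triangleq\norm{\bar{x}_\ell-\hat{x}}_2^2$, the numerator becomes $\sum_\ell a_\ell^2+\alpha\sum_\ell a_\ell b_\ell+\tfrac{\alpha^2}{4}\sum_\ell b_\ell^2$. Dividing by $\alpha$ and adding the regularization term shows that $B_\theta(\gamma,\alpha)$ has the form $c_1/\alpha+c_0+c_2\,\alpha$, where $c_1=\tfrac1\upsilon\sum_\ell a_\ell^2$, where $c_2=\tfrac1\upsilon\big(\tfrac14\sum_\ell b_\ell^2+\gamma\sum_\ell b_\ell\big)$, and where $c_0$ collects the cross term $\tfrac1\upsilon\sum_\ell a_\ell b_\ell$. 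The structural observation that makes the minimizer clean is that this linear cross term contributes \emph{only} to the $\alpha$\mbox{-}independent constant $c_0$, so it plays no role in locating the optimum.

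Since $c_1\ge 0$ and $c_2>0$, the map $\alpha\mapsto c_1/\alpha+c_0+c_2\alpha$ is strictly convex on $(0,\infty)$ whenever $c_1>0$, so it admits a unique minimizer characterized by $\tfrac{\partial}{\partial\alpha}B_\theta(\gamma,\alpha)=-c_1/\alpha^2+c_2=0$, that is $\alpha^*_\gamma=\sqrt{c_1/c_2}$. Substituting the expressions for $c_1$ and $c_2$, cancelling the common factor $\upsilon$, and rewriting the denominator $\tfrac14\sum_\ell b_\ell^2+\gamma\sum_\ell b_\ell$ in the factored form appearing in~\eqref{eq:alpha-opt} then delivers the stated closed form; specializing to $\gamma=0$ recovers $\alpha^*=\alpha^*_0$. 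The tightness assertion needs nothing further: $B^*_\theta$ and $B^*_\theta(\gamma)$ are by construction the values of the respective bounds evaluated at these optimal $\alpha$, and they are the smallest attainable within the one\mbox{-}parameter family $\{B_\theta(\gamma,\alpha)\}_{\alpha>0}$ precisely because $\alpha^*_\gamma$ is its global minimizer.

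I do not anticipate a serious obstacle, since the argument is elementary calculus once the substitution is made. The one place where care is genuinely required — and where a sign or bookkeeping slip is easy — is the expansion of $(\tilde{y}_\ell-\bar{y}_\ell)^2$ with the $\alpha$\mbox{-}dependence of $\tilde{y}_\ell$ retained, together with the verification that $c_2>0$. The latter holds as soon as the data are non\mbox{-}degenerate, i.e. the $\bar{x}_\ell$ are not all equal to $\hat{x}$, and $\gamma\ge 0$, which guarantees that the stationary point is a genuine interior minimizer rather than a boundary infimum at $\alpha\to 0^+$.
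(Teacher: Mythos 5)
Your overall route coincides with the paper's: both reduce the claim to minimizing the explicit scalar function from Remark~\ref{rem:dual_bounds} and solve the first-order optimality condition. (The paper first substitutes $\beta=\sqrt{\alpha}$ and works with $w(\beta)=\upsilon\,B_\theta(\gamma,\beta^2)$, checking $w''\geq 0$; your direct decomposition $B_\theta(\gamma,\alpha)=c_1/\alpha+c_0+c_2\alpha$, with the cross term $\tfrac{1}{\upsilon}\sum_{\ell\in\cN}a_\ell b_\ell$ absorbed into the $\alpha$-independent constant, is the same computation and is if anything cleaner.) Up to the identity $\alpha^*_\gamma=\sqrt{c_1/c_2}$ your argument is correct, with the caveat that you also need $c_1>0$ (not all $\bar{y}_\ell$ equal to $\hat{y}$), not just $c_2>0$, for the stationary point to be an attained interior minimizer; you flagged only the latter.

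The gap is your final step, the claim that substituting the expressions for $c_1$ and $c_2$ ``delivers the stated closed form'': it does not. Carrying out that substitution, with $a_\ell=\hat{y}-\bar{y}_\ell$ and $b_\ell=\norm{\bar{x}_\ell-\hat{x}}_2^2$,
\begin{equation*}
\sqrt{c_1/c_2}
=\left(\frac{\sum_{\ell\in\cN}a_\ell^2}{\tfrac{1}{4}\sum_{\ell\in\cN}b_\ell\left(b_\ell+4\gamma\right)}\right)^{1/2}
=2\left(\frac{\sum_{\ell\in\cN}(\bar{y}_\ell-\hat{y})^2}{\sum_{\ell\in\cN}\norm{\bar{x}_\ell-\hat{x}}_2^2\left(\norm{\bar{x}_\ell-\hat{x}}_2^2+4\gamma\right)}\right)^{1/2},
\end{equation*}
whereas \eqref{eq:alpha-opt} has leading constant $4$, the term $8\gamma$, and $\tilde{y}_\ell$ in place of $\bar{y}_\ell$. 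These cannot be reconciled; in fact \eqref{eq:alpha-opt} as printed is circular, since $\tilde{y}_\ell-\hat{y}=\tfrac{\alpha}{2}\norm{\bar{x}_\ell-\hat{x}}_2^2$ itself depends on $\alpha$. A concrete check at $\gamma=0$ with two points, $a_1=-a_2=1$ and $b_1=b_2=1$, gives $B_\theta(0,\alpha)=2/\alpha+\alpha/2$ whose minimizer is $\alpha^*=2$, while the formula in the lemma (reading $\bar{y}_\ell$ for $\tilde{y}_\ell$) would give $4$. The discrepancy originates in the paper, not in your algebra: the paper's displayed $w'(\beta)$ drops a factor of $2$ and turns $2\gamma$ into $4\gamma$, and it then sets $\alpha^*_\gamma=\sqrt{\beta^*}$ even though the change of variables $\beta=\sqrt{\alpha}$ forces $\alpha^*_\gamma=(\beta^*)^2$. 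So your derivation is the correct one, but a complete writeup cannot assert agreement with \eqref{eq:alpha-opt}; it must instead state that the minimizer is the display above, i.e., that \eqref{eq:alpha-opt} should have $\bar{y}_\ell$ in place of $\tilde{y}_\ell$, $2$ in place of $4$, and $4\gamma$ in place of $8\gamma$.
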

\begin{proof}
According to definition of $\{\tilde{y}_\ell\}_{\ell\in\cN}$ given in Lemma~\ref{lem:dual-bound}, $B_\theta(\gamma,\alpha)$ can be explicitly stated as follows:
\begin{equation}\label{eq:explicit-B-tetha-gamma}
    B_\theta(\gamma,\alpha)=\frac{1}{\alpha\upsilon}\sum_{\ell\in\cN}\Big(\hat{y}-\bar{y}_\ell+\tfrac{\alpha}{2}\norm{\hat{x}-\bar{x}_\ell}_2^2\Big)^2+\frac{\gamma\alpha}{\upsilon}\sum_{\ell\in\cN}\norm{\hat{x}-\bar{x}_\ell}_2^2.
\end{equation}
To simplify the notation, let $p_\ell\triangleq\hat{y}-\bar{y}_\ell$, and $q_\ell\triangleq\tfrac{1}{2}\norm{\hat{x}-\bar{x}_\ell}_2^2$ for $\ell\in\cN$. Via the change of variables $\beta=\sqrt{\alpha}$, we obtain the following equivalent problem:
\begin{equation}
\min_\beta\Big\{ w(\beta)\triangleq\sum_{\ell\in\cN}\Big(\frac{1}{\beta}~p_\ell+\beta~q_\ell\Big)^2+2\gamma\beta^2\sum_{\ell\in\cN}q_\ell:\  \beta>0\Big\}.
\end{equation}
Clearly, we have
\begin{equation}
w'(\beta)=\sum_{\ell\in\cN}q_\ell(q_\ell+4\gamma)\beta-\frac{p_\ell^2}{\beta^3},\quad w''(\beta)=\sum_{\ell\in\cN}q_\ell(q_\ell+4\gamma)+3\frac{p_\ell^2}{\beta^4}.
\end{equation}
Since $w''(\beta)\geq 0$ for $\beta>0$, $w(\beta)$ is a convex function and first-order necessary optimality condition, i.e., $w'(\beta^*)=0$, is also sufficient. In particular, solving for $\beta^*$ and setting $\alpha^*_\gamma=\sqrt{\beta^*}$ gives the desired result in \eqref{eq:alpha-opt}.\qed
\end{proof}
Let constants $B^*_\theta$ and $B^*_\theta(\gamma)$ be as defined in Lemma~\ref{lem:opt-alpha}. Now, using \eqref{eq:lischitz} and the bounds given in Remark~\ref{rem:dual_bounds}, we can customize the generic rate results in \eqref{eq:gd-rate} for \emph{gradient ascent} and those in \eqref{eq:apg-rate} for APG methods. In particular, for any $\gamma>0$, in order to compute a $\delta$-optimal solution to the problem in \eqref{eq:dual-problem}, i.e., $\pmb{\theta}_{\gamma,\delta}\geq 0$ such that $0\leq p_\gamma^*-g_\gamma(\pmb{\theta}_{\gamma,\delta})\leq \delta$, the gradient ascent method requires $\cO(L_\gamma/\delta)=\cO({B^*_\theta}^2/(\gamma \, \delta))$ iterations.
On the other hand, 
P-APG in Fig.~\ref{fig:papg} can compute a $\delta$-optimal solution to \eqref{eq:dual-problem} within $\cO(\sqrt{L_\gamma/\delta})$ iterations. More precisely, \eqref{eq:lischitz} implies $\cO(B^*_\theta(\gamma)/(\gamma \, \delta)^{1/2})$ iteration complexity for P-APG when applied to \eqref{eq:dual-problem}.

The $\cO(1)$ constant depends on $\sigma_{\max}(C)$, and to better have a better understanding of how it grows with the problem size, we provide some bounds for $\sigma_{\max}(A_1)$, $\sigma_{\max}(A_2)$, and $\sigma_{\max}(C)$.
\begin{lemma}
\label{lem:O1-constant}
Let $A_1\in\mathbb{R}^{N(N-1)\times N}$ and $A_2\in\mathbb{R}^{N(N-1)\times Nn}$ be the matrices in \eqref{original_compact}, i.e., corresponding to the constraints in \eqref{original}; and let $A_3$, $A_4$, and $C$ be the matrices as given in Definition~\ref{def:C}. Then, $\sigma_{\max}(A_3)\leq\sigma_{\max}(A_1)=\sqrt{2N}$, $\sigma_{\max}(A_4)\leq\sigma_{\max}(A_2)\leq B_x N$, and $\sigma_{\max}(C)\leq \sqrt{2N}+B_x N$.
\end{lemma}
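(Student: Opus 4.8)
The plan is to handle the three target matrices by reducing their singular value bounds to those of $A_1$ and $A_2$ through elementary submatrix and block arguments, and then to compute $\sigma_{\max}(A_1)$ exactly and bound $\sigma_{\max}(A_2)$ by exploiting the explicit structure given in Definition~\ref{def:A1A2}.

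First I would compute $\sigma_{\max}(A_1)$. Since the row of $A_1$ indexed by $(\ell_1,\ell_2)\in\cP$ is $e_{\ell_2}-e_{\ell_1}$ (coming from the term $y_{\ell_2}-y_{\ell_1}$), summing the rank-one terms $(e_{\ell_2}-e_{\ell_1})(e_{\ell_2}-e_{\ell_1})^\top$ over all ordered pairs yields $A_1^\top A_1 = 2N\,\id_N - 2\,\one\one^\top$. The matrix $\one\one^\top$ has eigenvalue $N$ on $\mathrm{span}\{\one\}$ and $0$ on its orthogonal complement, so $A_1^\top A_1$ has eigenvalue $0$ on $\mathrm{span}\{\one\}$ and $2N$ on the complement; hence $\sigma_{\max}(A_1)=\sqrt{2N}$. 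Because $A_3$ is obtained by vertically stacking the row-submatrices $A_1^{ij}$ (Definition~\ref{eq:submatrices} and \ref{def:C}), it is a row-submatrix of $A_1$, so for any unit vector $v$ the vector $A_3 v$ is a subvector of $A_1 v$; therefore $\sigma_{\max}(A_3)\le\sigma_{\max}(A_1)=\sqrt{2N}$.

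Next I would bound $\sigma_{\max}(A_2)$. Since $A_2=\diag(\{X_\ell\}_{\ell\in\cN})$ is block diagonal, $\sigma_{\max}(A_2)=\max_{\ell\in\cN}\sigma_{\max}(X_\ell)$, and with $X_\ell=-T_\ell\bar X$ submultiplicativity gives $\sigma_{\max}(X_\ell)\le\sigma_{\max}(T_\ell)\,\sigma_{\max}(\bar X)$. A short computation from the form of $T_\ell$ yields $T_\ell T_\ell^\top=\id_{N-1}+\one\one^\top$, whose largest eigenvalue is $N$, so $\sigma_{\max}(T_\ell)=\sqrt{N}$. Bounding the spectral norm of $\bar X$ by its Frobenius norm and using $\norm{\bar x_\ell}_2\le B_x$ gives $\sigma_{\max}(\bar X)\le\norm{\bar X}_F\le B_x\sqrt N$, whence $\sigma_{\max}(X_\ell)\le B_x N$ and $\sigma_{\max}(A_2)\le B_x N$. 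As before, $A_4$ is a row-submatrix of $A_2$, so $\sigma_{\max}(A_4)\le\sigma_{\max}(A_2)\le B_x N$.

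Finally, writing $C=[\,A_3\ A_4\,]=[\,A_3\ \zer\,]+[\,\zer\ A_4\,]$ and applying the triangle inequality for the operator norm, together with $\norm{[\,A_3\ \zer\,]}=\sigma_{\max}(A_3)$ and $\norm{[\,\zer\ A_4\,]}=\sigma_{\max}(A_4)$, gives $\sigma_{\max}(C)\le\sigma_{\max}(A_3)+\sigma_{\max}(A_4)\le\sqrt{2N}+B_x N$. The only genuine computations here are the two eigenvalue evaluations $A_1^\top A_1=2N\,\id_N-2\,\one\one^\top$ and $T_\ell T_\ell^\top=\id_{N-1}+\one\one^\top$; the remaining steps are routine submatrix and triangle-inequality arguments. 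The place I would watch most carefully is the Frobenius bound on $\sigma_{\max}(\bar X)$, which is the sole source of looseness and is exactly what forces the $B_x N$ estimate rather than a sharper $\cO(B_x\sqrt N)$ one; I would simply confirm that this crude estimate is what the stated bound requires.
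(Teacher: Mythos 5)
Your proof is correct and follows essentially the same route as the paper: exact evaluation of $\sigma_{\max}(A_1)$ via the Gram matrix $A_1^\top A_1=2N\,\id_N-2\,\one\one^\top$ (the paper phrases this as twice the complete-graph Laplacian), the block-diagonal reduction $\sigma_{\max}(A_2)=\max_\ell\sigma_{\max}(X_\ell)$ with $\norm{T_\ell}=\sqrt{N}$ and $\norm{\bar X}\le B_x\sqrt N$, row-submatrix monotonicity for $A_3,A_4$, and the triangle inequality for $C$. The only cosmetic differences are that you compute $T_\ell T_\ell^\top=\id_{N-1}+\one\one^\top$ where the paper uses the star-tree Laplacian $T_\ell^\top T_\ell$ (same nonzero spectrum), and you justify $\norm{\bar X}\le B_x\sqrt N$ via the Frobenius norm where the paper asserts it directly.
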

\begin{proof}
It is easy to observe that $A_1^\top A_1=2 \Omega$, where $\Omega\in\reals^{N\times N}$ denotes the Laplacian matrix of a complete graph with $N$ vertices, i.e., for each $i=1,\ldots,N$, $\Omega_{ii}=N-1$, and $\Omega_{ij}=-1$ for all $j\neq i$. It is known that $\Omega$ has two distinct eigenvalues: 0 (with multiplicity $1$) and $N$ (with multiplicity $N-1$). Therefore, $\sigma_{\max}(A_1)=\sqrt{2N}$; and since $A_3$ is a submatrix of $A_1$, one immediately has $\sigma_{\max}(A_3)\leq\sigma_{\max}(A_1)$.

Since $A_2$ is block-diagonal, we have $\sigma_{\max}(A_2)=\max_{\ell\in\cN}\{\sigma_{\max}(X_\ell)\}$, where $X_\ell=-T_\ell \bar{X}$ (see Definition~\ref{def:A1A2} for $T_\ell$ and $\bar{X}$). Hence, we have $\sigma_{\max}(A_2)\leq\norm{\bar{X}}~\max\{\norm{T_\ell}:~\ell\in\cN\}$. For $\ell\in\cN$, let $\Omega_\ell=T_\ell^\top T_\ell$; it is easy to observe that $\Omega_\ell\in\reals^{N\times N}$ is the Laplacian matrix of a star-tree with $N-1$ leaves ($\ell$ is the internal node). It is known that $\Omega$ has three distinct eigenvalues: 0 (with multiplicity $1$), $1$ (with multiplicity $N-2$), and $N$ (with multiplicity 1). Therefore, $\norm{T_\ell}=\sqrt{N}$ for all $\ell\in\cN$. On the other hand, since $\norm{\bar{x}_\ell}_2\leq B_x$ for $\ell\in\cN$, $\norm{\bar{X}}\leq B_x\sqrt{N}$. Therefore, $\sigma_{\max}(A_2)\leq B_x N$; and since $A_4$ is a submatrix of $A_2$, one immediately has $\sigma_{\max}(A_4)\leq\sigma_{\max}(A_2)$. Finally, since $C=[A_3 A_4]$, clearly $\norm{C}\leq\norm{A_3}+\norm{A_4}$.\qed
\end{proof}

Next, we study the error bounds for inexact solutions. Given $\gamma>0$, let $\pmb{\theta}_{\gamma,\delta}$ be a $\delta$-optimal solution to \eqref{eq:dual-problem}, and $(\pmb{y}_{\gamma,\delta}, \pmb{\xi}_{\gamma,\delta})$ be the optimal solution to the minimization problem in \eqref{subgradient}, or equivalently to \eqref{eq:g_gamma}, when $\pmb{\theta}$ is set to $\pmb{\theta}_{\gamma,\delta}$. In Theorem~\ref{bound} 
we establish error bounds on the suboptimality $\norm{\pmb{y}_{\gamma,\delta}-\pmb{y}^*}_2$, and on the infeasibility $\norm{ (A_1 \pmb{y}_{\gamma,\delta} + A_2 \pmb{\xi}_{\gamma,\delta} )_{-} }_2$.   
\begin{theorem}
\label{bound}
Given $\gamma>0$, let $\left(\pmb{y}^*_\gamma, \pmb{\xi}^*_\gamma\right)$ and $\pmb{\theta}^*_\gamma$ denote the optimal solutions to \eqref{regularize}  and \eqref{eq:dual-problem}, respectively. Let $\pmb{\theta}_{\gamma,\delta}$ be a $\delta$-optimal solution to \eqref{eq:dual-problem}, and $\left(\pmb{y}_{\gamma,\delta}, \pmb{\xi}_{\gamma,\delta}\right)$ be the minimizer in 
\eqref{eq:g_gamma} when $\pmb{\theta}$ is set to $\pmb{\theta}_{\gamma,\delta}$. 
For all $\gamma,\delta > 0$, the following bounds hold: 
\begin{flalign}
\label{bound ineq}
&\norm{ \pmb{y}_{\gamma,\delta} - \pmb{y}^*}_2 \leq \norm{\pmb{\xi}^*}_2\sqrt{\gamma}+\sqrt{2\delta},\\
&\norm{ (A_1 \, \pmb{y}_{\gamma,\delta} + A_2 \, \pmb{\xi}_{\gamma,\delta} )_{-} }_2 \leq 
2\sqrt{N \delta} + 
B_x N \sqrt{\frac{2 \delta }{\gamma}}. \label{bound vio}
\end{flalign}
Moreover, both starting from the initial iterate $\pmb{\theta}^0=\mathbf{0}$, P-APG can compute $(\pmb{y}_{\gamma,\delta},\pmb{\xi}_{\gamma,\delta})$ within $K(\delta,\gamma)=\sigma_{\max}(C)B^*_\theta(\gamma)\sqrt{2/(\gamma\delta)}$ iterations while gradient ascent 
requires $2(B^*_\theta\sigma_{\max}(C))^2/(\gamma\delta)$ iterations, where $\sigma_{\max}(C)=\cO(N)$.
\end{theorem}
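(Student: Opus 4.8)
The plan is to prove the three claims separately, handling suboptimality, infeasibility, and iteration complexity in turn, and leaning heavily on the machinery already built up in the excerpt. For the suboptimality bound \eqref{bound ineq}, I would use the triangle inequality to split $\norm{\pmb{y}_{\gamma,\delta}-\pmb{y}^*}_2 \leq \norm{\pmb{y}_{\gamma,\delta}-\py^*_\gamma}_2 + \norm{\py^*_\gamma-\pmb{y}^*}_2$. The second term is controlled directly by Lemma~\ref{lem:continuity}, giving $\norm{\py^*_\gamma-\pmb{y}^*}_2 \leq \norm{\pmb{\xi}^*}_2\sqrt{\gamma}$. For the first term, I would invoke the primal-dual relationship established in Theorem~\ref{thm:rate}: since $(\pmb{y}_{\gamma,\delta},\pmb{\xi}_{\gamma,\delta})$ is the unique minimizer in \eqref{eq:g_gamma} at the $\delta$-optimal dual point $\pmb{\theta}_{\gamma,\delta}$, the bound \eqref{eq:primal-convergence} applied with $\pmb{\theta}^k=\pmb{\theta}_{\gamma,\delta}$ yields $\norm{\pmb{y}_{\gamma,\delta}-\py^*_\gamma}_2^2 \leq \norm{\pmb{y}_{\gamma,\delta}-\py^*_\gamma}_2^2 + \gamma\norm{\pmb{\xi}_{\gamma,\delta}-\pxi^*_\gamma}_2^2 \leq 2(p^*_\gamma-g_\gamma(\pmb{\theta}_{\gamma,\delta})) \leq 2\delta$, so $\norm{\pmb{y}_{\gamma,\delta}-\py^*_\gamma}_2 \leq \sqrt{2\delta}$. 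Combining gives \eqref{bound ineq}.

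For the infeasibility bound \eqref{bound vio}, the idea is that the constraints dualized in forming $g_\gamma$, namely $C\pmb{\eta}\geq 0$, are exactly the rows $A_1^{ij},A_2^{ij}$ for $(i,j)\in\cG$, while the retained constraints $A_1^{ii}\pmb{y}+A_2^{ii}\pmb{\xi}\geq 0$ are satisfied exactly by $(\pmb{y}_{\gamma,\delta},\pmb{\xi}_{\gamma,\delta})\in Q$. Hence the only possible violation comes from $C\pmb{\eta}_{\gamma,\delta}$, and $\norm{(A_1\pmb{y}_{\gamma,\delta}+A_2\pmb{\xi}_{\gamma,\delta})_-}_2 = \norm{(C\pmb{\eta}_{\gamma,\delta})_-}_2$. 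I would then bound this by writing $C\pmb{\eta}_{\gamma,\delta} = C(\pmb{\eta}_{\gamma,\delta}-\pmb{\eta}^*_\gamma) + C\pmb{\eta}^*_\gamma$, noting $C\pmb{\eta}^*_\gamma\geq 0$ since $(\py^*_\gamma,\pxi^*_\gamma)$ is feasible for \eqref{regularize}, so the negative part satisfies $\norm{(C\pmb{\eta}_{\gamma,\delta})_-}_2 \leq \norm{C(\pmb{\eta}_{\gamma,\delta}-\pmb{\eta}^*_\gamma)}_2 \leq \norm{A_3}\norm{\pmb{y}_{\gamma,\delta}-\py^*_\gamma}_2 + \norm{A_4}\norm{\pmb{\xi}_{\gamma,\delta}-\pxi^*_\gamma}_2$. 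From \eqref{eq:primal-convergence} I have $\norm{\pmb{y}_{\gamma,\delta}-\py^*_\gamma}_2\leq\sqrt{2\delta}$ and $\norm{\pmb{\xi}_{\gamma,\delta}-\pxi^*_\gamma}_2\leq\sqrt{2\delta/\gamma}$, and from Lemma~\ref{lem:O1-constant} I have $\norm{A_3}\leq\sqrt{2N}$ and $\norm{A_4}\leq B_x N$, which combine to give exactly the two terms $2\sqrt{N\delta}+B_xN\sqrt{2\delta/\gamma}$ in \eqref{bound vio}.

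For the iteration-complexity claims, I would apply the generic rates \eqref{eq:gd-rate} and \eqref{eq:apg-rate} to the dual objective with Lipschitz constant $L_\gamma=\sigma_{\max}^2(C)/\gamma$ from \eqref{eq:lischitz}, starting from $\pmb{\theta}^0=\mathbf{0}$ so that $\norm{\pmb{\theta}^0-\pmb{\theta}^*_\gamma}_2 = \norm{\pmb{\theta}^*_\gamma}_2$, which I bound using $\norm{\pmb{\theta}^*_\gamma}_2\leq\norm{\pmb{\theta}^*_\gamma}_1\leq B^*_\theta(\gamma)$ from Remark~\ref{rem:dual_bounds} and Lemma~\ref{lem:opt-alpha}. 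Setting the right-hand side of \eqref{eq:apg-rate} equal to $\delta$ and solving for $k$ yields the P-APG bound $K(\delta,\gamma)=\sigma_{\max}(C)B^*_\theta(\gamma)\sqrt{2/(\gamma\delta)}$, and similarly \eqref{eq:gd-rate} gives the gradient-ascent count $2(B^*_\theta\sigma_{\max}(C))^2/(\gamma\delta)$; the order estimate $\sigma_{\max}(C)=\cO(N)$ is immediate from Lemma~\ref{lem:O1-constant}.

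The main obstacle I anticipate is making the infeasibility argument fully rigorous about which constraints contribute to the violation. I must carefully justify that $(\pmb{y}_{\gamma,\delta},\pmb{\xi}_{\gamma,\delta})\in Q$ exactly (so the block-diagonal "intra-cluster" constraints contribute zero to the negative part), and that the full constraint vector $A_1\pmb{y}+A_2\pmb{\xi}$ decomposes cleanly into the retained $Q$-defining rows and the dualized rows forming $C\pmb{\eta}$; this requires invoking the row-partition structure from Definition~\ref{eq:submatrices} and the observation in the text that the $Q$ constraints coincide with the $\cQ_i$ constraints. The bookkeeping is routine but must be stated precisely so that the spectral-norm splitting $\norm{C(\pmb{\eta}_{\gamma,\delta}-\pmb{\eta}^*_\gamma)}_2\leq\norm{A_3}\norm{\cdot}_2+\norm{A_4}\norm{\cdot}_2$ is valid.
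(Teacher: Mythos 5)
Your proposal is correct and follows essentially the same route as the paper's proof: the triangle inequality combined with Lemma~\ref{lem:continuity} and the bound \eqref{eq:primal-convergence} from Theorem~\ref{thm:rate} for \eqref{bound ineq}; the reduction of infeasibility to the dualized rows $C\pmb{\eta}_{\gamma,\delta}$ followed by the spectral-norm splitting with Lemma~\ref{lem:O1-constant} for \eqref{bound vio}; and the generic rates \eqref{eq:gd-rate}, \eqref{eq:apg-rate} with $L_\gamma$ and the dual bounds of Remark~\ref{rem:dual_bounds} for the iteration counts. The only cosmetic difference is that you handle the negative-part operator via monotonicity under adding the nonnegative vector $C\pmb{\eta}^*_\gamma$, whereas the paper uses the nonexpansiveness inequality $\norm{(\pmb{x})_{-}-(\pmb{y})_{-}}_2\leq\norm{\pmb{x}-\pmb{y}}_2$ with $(C\pmb{\eta}^*_\gamma)_-=\mathbf{0}$; these are equivalent observations.
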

\proof
Given $\pmb{\theta}_{\gamma,\delta}\geq 0$ and the corresponding minimizer, $\left(\pmb{y}_{\gamma,\delta}, \pmb{\xi}_{\gamma,\delta}\right)$, to the problem in \eqref{eq:g_gamma} when $\pmb{\theta}$ is set to $\pmb{\theta}_{\gamma,\delta}$, Theorem~\ref{thm:rate} implies that
\begin{equation}
\label{eq:delta-bound}
\norm{\pmb{y}_{\gamma,\delta}-\pmb{y}^*_\gamma}^2_2+\gamma\norm{\pmb{\xi}_{\gamma,\delta}-\pmb{\xi}^*_\gamma}_2^2\leq 2\left(p^*_\gamma-g_\gamma(\pmb{\theta}_{\gamma,\delta})\right)\leq2\delta.
\end{equation}
Hence, Lemma~\ref{lem:continuity} and \eqref{eq:delta-bound} together imply that
\begin{equation*}
\norm{\pmb{y}_{\gamma,\delta}-\pmb{y}^*}_2\leq\norm{\pmb{y}_{\gamma,\delta}-\pmb{y}^*_\gamma}_2+\norm{\pmb{y}^*_\gamma-\pmb{y}^*}_2\leq\norm{\pmb{\xi}^*}_2\sqrt{\gamma}+\sqrt{2\delta}.
\end{equation*}
Moreover, since $\left(\pmb{y}_{\gamma,\delta}, \pmb{\xi}_{\gamma,\delta}\right)\in Q$ and $\left(\pmb{y}^*_{\gamma}, \pmb{\xi}^*_{\gamma}\right)$ is feasible to \eqref{regularize}, i.e., $A_1\pmb{y}^*_\gamma+A_2\pmb{\xi}^*_\gamma\geq 0$, we have
\begin{eqnarray}
\lefteqn{\norm{ (A_1 \, \pmb{y}_{\gamma,\delta} + A_2 \, \pmb{\xi}_{\gamma,\delta} )_{-} }_2} \nonumber\\
& & = \norm{ (A_3 \, \pmb{y}_{\gamma,\delta} + A_4 \, \pmb{\xi}_{\gamma,\delta} )_{-} }_2=\norm{ (A_3 \, \pmb{y}_{\gamma,\delta} + A_4 \, \pmb{\xi}_{\gamma,\delta} )_{-}-(A_3 \, \pmb{y}^*_{\gamma} + A_4 \, \pmb{\xi}^*_{\gamma} )_{-} }_2, \nonumber \\
& & \leq\norm{ A_3 \, (\pmb{y}_{\gamma,\delta}-\pmb{y}^*_{\gamma}) + A_4 \, (\pmb{\xi}_{\gamma,\delta}-\pmb{\xi}^*_{\gamma} )}_2 \nonumber \\
& &\leq\sigma_{\max}(A_3)\norm{\pmb{y}_{\gamma,\delta}-\pmb{y}^*_\gamma}_2+\sigma_{\max}(A_4)\norm{\pmb{\xi}_{\gamma,\delta}-\pmb{\xi}^*_\gamma}_2
\label{eq:infeasibility-bound}
\end{eqnarray}
where the first inequality follows from the fact that $\norm{\pmb{x}-\pmb{y}}_2\geq\norm{(\pmb{x})_{-}-(\pmb{y})_{-}}_2$ for any $\pmb{x}$ and $\pmb{y}$. The infeasibility result in \eqref{bound vio} immediately follows from \eqref{eq:delta-bound} and \eqref{eq:infeasibility-bound}. The iteration complexity bounds can be obtained using the arguments immediately after Remark~\ref{rem:dual_bounds}.\qed
\endproof

As for some applications having an error bound on how $\pmb{\xi}_{\gamma,\delta}$ approximates $\pmb{\xi}^*$, i.e., the subgradients at $\{\bar{x}_\ell\}_{\ell\in\cN}$, is crucial. Next, we 
show that $\norm{\pmb{\xi}_{\gamma,\delta}-\pmb{\xi}^*}_2$ is indeed small. 

\begin{theorem}
\label{thm:xi-bound}
There exists $K>0$ such that $\norm{\pmb{\xi}^*_\gamma-\pmb{\xi}^*}_2\leq K\norm{A_1(\pmb{y}^*_\gamma-\pmb{y}^*)}_2$; hence, $\norm{\pmb{\xi}^*_\gamma-\pmb{\xi}^*}_2\leq K\sigma_{\max}(A_1)\norm{\pmb{\xi}^*}_2\sqrt{\gamma}$, which implies
\begin{equation*}
\norm{\pmb{\xi}_{\gamma,\delta}-\pmb{\xi}^*}_2\leq  K\norm{\pmb{\xi}^*}_2\sqrt{2N\gamma}+\sqrt{\frac{2\delta}{\gamma}}.
\end{equation*}
\end{theorem}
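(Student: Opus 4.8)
The plan is to recognize both $\pmb{\xi}^*$ and $\pmb{\xi}^*_\gamma$ as solutions of the \emph{same} parametric least-norm problem, differing only in the constraint right-hand side, and then invoke a Lipschitz stability result for that solution map. First I would observe that, since $\pmb{y}^*$ is the unique $\pmb{y}$-component over $\chi^*$ (Proposition~1 in~\cite{lim2012consistency}), the least-norm element $\pmb{\xi}^*$ from \eqref{tikhonov} is exactly the minimizer of $\tfrac{1}{2}\norm{\pmb{\xi}}_2^2$ over $\{\pmb{\xi}:\ A_2\pmb{\xi}\geq -A_1\pmb{y}^*\}$. Likewise, since the objective of \eqref{regularize} decouples into a $\pmb{y}$-term and a $\pmb{\xi}$-term and $\gamma>0$, fixing the optimal $\pmb{y}^*_\gamma$ forces $\pmb{\xi}^*_\gamma$ to minimize $\tfrac{\gamma}{2}\norm{\pmb{\xi}}_2^2$ over $\{\pmb{\xi}:\ A_2\pmb{\xi}\geq -A_1\pmb{y}^*_\gamma\}$, i.e.\ $\pmb{\xi}^*_\gamma$ is again the least-norm feasible $\pmb{\xi}$, now for the perturbed right-hand side. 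Thus, defining $\pmb{\xi}(b)\triangleq\argmin\{\tfrac{1}{2}\norm{\pmb{\xi}}_2^2:\ A_2\pmb{\xi}\geq b\}$, we have $\pmb{\xi}^*=\pmb{\xi}(-A_1\pmb{y}^*)$ and $\pmb{\xi}^*_\gamma=\pmb{\xi}(-A_1\pmb{y}^*_\gamma)$.

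The key step is the Lipschitz continuity of the map $b\mapsto\pmb{\xi}(b)$. This is a strongly convex (hence single-valued) parametric QP whose only varying datum is the right-hand side; its KKT system defines a polyhedral multifunction, so the solution map is piecewise affine over finitely many polyhedral cells of $b$ and is therefore globally Lipschitz with some modulus $K$ depending only on $A_2$. This immediately yields $\norm{\pmb{\xi}^*_\gamma-\pmb{\xi}^*}_2\leq K\norm{A_1(\pmb{y}^*_\gamma-\pmb{y}^*)}_2$, the first assertion. Chaining this with $\norm{A_1 d}_2\leq \sigma_{\max}(A_1)\norm{d}_2$ and the H\"older bound $\norm{\pmb{y}^*_\gamma-\pmb{y}^*}_2\leq\norm{\pmb{\xi}^*}_2\sqrt{\gamma}$ from Lemma~\ref{lem:continuity} gives the second assertion $\norm{\pmb{\xi}^*_\gamma-\pmb{\xi}^*}_2\leq K\sigma_{\max}(A_1)\norm{\pmb{\xi}^*}_2\sqrt{\gamma}$.

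For the final estimate I would split via the triangle inequality $\norm{\pmb{\xi}_{\gamma,\delta}-\pmb{\xi}^*}_2\leq\norm{\pmb{\xi}_{\gamma,\delta}-\pmb{\xi}^*_\gamma}_2+\norm{\pmb{\xi}^*_\gamma-\pmb{\xi}^*}_2$. The inexactness term is controlled by the bound $\gamma\norm{\pmb{\xi}_{\gamma,\delta}-\pmb{\xi}^*_\gamma}_2^2\leq 2\delta$ established in \eqref{eq:delta-bound} via Theorem~\ref{thm:rate}, so $\norm{\pmb{\xi}_{\gamma,\delta}-\pmb{\xi}^*_\gamma}_2\leq\sqrt{2\delta/\gamma}$. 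Substituting $\sigma_{\max}(A_1)=\sqrt{2N}$ from Lemma~\ref{lem:O1-constant} into the second assertion turns the remaining term into $K\norm{\pmb{\xi}^*}_2\sqrt{2N\gamma}$, and adding the two produces the claimed bound $\norm{\pmb{\xi}_{\gamma,\delta}-\pmb{\xi}^*}_2\leq K\norm{\pmb{\xi}^*}_2\sqrt{2N\gamma}+\sqrt{2\delta/\gamma}$.

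The main obstacle is justifying the genuine \emph{Lipschitz} (not merely H\"older) dependence of $\pmb{\xi}(b)$ on $b$, i.e.\ the existence of $K$. A direct variational-inequality/projection argument combined with a Hoffman bound for the polyhedron $\{\pmb{\xi}:\ A_2\pmb{\xi}\geq b\}$ only delivers a $\sqrt{\norm{\Delta b}_2}$ estimate, which is too weak; obtaining the linear rate genuinely requires the piecewise-affine structure of the QP solution map, in the spirit of the polyhedral-multifunction stability results of Robinson and Daniel. It is precisely this step that furnishes the constant $K$, and I would need to argue that $K$ is a property of $A_2$ alone and in particular does not degrade with $\gamma$ or $\delta$.
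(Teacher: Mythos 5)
Your proposal is correct and follows essentially the same route as the paper's proof: both identify $\pmb{\xi}^*$ and $\pmb{\xi}^*_\gamma$ as least-norm (projection-type) solutions of the same parametric polyhedral problem with right-hand sides $-A_1\pmb{y}^*$ and $-A_1\pmb{y}^*_\gamma$, invoke a Lipschitz stability result for that solution map (the paper cites Theorem~2.1 of Yen on metric projection onto parametric polyhedral sets, which is exactly the Robinson-style polyhedral stability you appeal to), and then chain with Lemma~\ref{lem:continuity}, the bound $\norm{\pmb{\xi}_{\gamma,\delta}-\pmb{\xi}^*_\gamma}_2\leq\sqrt{2\delta/\gamma}$ from \eqref{eq:delta-bound}, and $\sigma_{\max}(A_1)=\sqrt{2N}$. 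Your closing concern about genuinely Lipschitz (rather than H\"older) dependence, with a constant depending only on $A_2$, is precisely what the paper's citation to Yen resolves.
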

\proof
Since $\pmb{y}^*$ is the unique optimal solution to \eqref{original_compact}, \eqref{tikhonov} implies that $\pmb{\xi}^*=\argmin\{\norm{\pmb{\xi}}_2:\ A_1 \, \pmb{y}^*+A_2 \, \pmb{\xi}\geq \pmb{0}\}$. Similarly, \eqref{regularize} implies that $\pmb{\xi}^*_\gamma=\argmin\{\norm{\pmb{\xi}}_2:\ A_1 \, \pmb{y}^*_\gamma+A_2 \, \pmb{\xi}\geq \pmb{0}\}$. Define $\pmb{h}(\gamma)\triangleq-A_1\pmb{y}^*_\gamma$ for $\gamma\geq 0$. Note from Lemma~\ref{lem:continuity}, we have $\pmb{h}(0)=-A_1\pmb{y}^*_0=-A_1\pmb{y}^*$. Therefore, for $\gamma\geq 0$,
\begin{equation}
\pmb{\xi}^*_\gamma=\argmin \big\{\norm{\pmb{\xi}}_2:\ A_2 \, \pmb{\xi}\geq \pmb{h}(\gamma) \big\}.
\end{equation}
Note that for $\gamma=0$, $\pmb{\xi}^*_0=\pmb{\xi}^*$. Sensitivity of metric projection onto parametric polyhedral sets is studied in~\cite{Yen95_1J}. According to Theorem~2.1 in~\cite{Yen95_1J}, there exists $K>0$ such that
\begin{equation}
\norm{\pmb{\xi}^*_\gamma-\pmb{\xi}^*_{\gamma'}}_2\leq K\norm{\pmb{h}(\gamma)-\pmb{h}(\gamma')}_2\leq K\sigma_{\max}(A_1)\norm{\pmb{y}^*_\gamma-\pmb{y}^*_{\gamma'}}_2,\quad \forall\ \gamma,\gamma'\geq 0.
\end{equation}
Therefore, given $\gamma>0$, setting $\gamma'=0$, and using Lemma~\ref{lem:continuity}, we have
\begin{equation}
\label{eq:sensitivity-bound}
\norm{\pmb{\xi}^*_\gamma-\pmb{\xi}^*}_2\leq K\sigma_{\max}(A_1)\norm{\pmb{y}^*_\gamma-\pmb{y}^*}_2\leq K\sigma_{\max}(A_1)\norm{\pmb{\xi}^*}\sqrt{\gamma}.
\end{equation}
Moreover, \eqref{eq:delta-bound} implies that $\norm{\pmb{\xi}_{\gamma,\delta}-\pmb{\xi}^*_\gamma}_2\leq\sqrt{\frac{2\delta}{\gamma}}$. Hence, combining this with \eqref{eq:sensitivity-bound} gives the desired result since $\sigma_{\max}(A_1)=\sqrt{2N}$.\qed
\endproof

We can summarize Theorem~\ref{bound} and Theorem~\ref{thm:xi-bound} briefly as follows. If the main objective is the function value approximation and estimating the subgradients are not crucial, then according to 
Theorem~\ref{bound}, for any given $\epsilon>0$, setting $\gamma=\delta=\epsilon$ implies that $\pmb{y}_{\gamma,\delta}\in\reals^{N}$ satisfies $\norm{\pmb{y}_{\gamma,\delta}-\pmb{y}^*}_2^2=\cO(\epsilon)$ and it can be computed within $\cO(N^2 {B^*_\theta}^2/\epsilon^2)$ iterations of the gradient ascent method on \eqref{eq:dual-problem} (which is the \emph{same} as the iteration complexity of the projected subgradient method applied to \eqref{eq:dual-problem} for $\gamma=0$), and within $\cO(N B^*_\theta(\epsilon)/\epsilon)$ iterations of P-APG in Fig.~\ref{fig:papg} on \eqref{eq:dual-problem}. On the other hand if the subgradient approximation is important too, then according to Lemma~\ref{lem:continuity}, Theorem~\ref{bound} and Theorem~\ref{thm:xi-bound}, for any given $\epsilon>0$, by setting $\gamma=\epsilon$ and $\delta=\epsilon^2$ implies that $\pmb{y}_{\gamma,\delta}\in\reals^{N}$ satisfies $\norm{\pmb{y}_{\gamma,\delta}-\pmb{y}^*}_2^2=\cO(\epsilon)$, $\norm{\pmb{\xi}_{\gamma,\delta}-\pmb{\xi}^*}_2^2=\cO(\epsilon)$ and $\norm{(A_1\pmb{y}_{\gamma,\delta}+A_2\pmb{\xi}_{\gamma,\delta})_-}_2^2\leq \cO(\epsilon)$ within $\cO(N^2 {B^*_\theta}^2/\epsilon^3)$ iterations using the gradient ascent method on \eqref{eq:dual-problem}, and within $\cO(N B^*_\theta(\epsilon)/\epsilon^{3/2})$ iterations using P-APG in Fig.~\ref{fig:papg} on \eqref{eq:dual-problem}.
\subsubsection{Continuation Method for Convex Regression}
\label{sec:cont-method}
Let $\pmb{\theta}_{\gamma,\delta}$ be a $\delta$-optimal solution to \eqref{eq:dual-problem}, and $\left(\pmb{y}_{\gamma,\delta}, \pmb{\xi}_{\gamma,\delta}\right)$ be the minimizer in \eqref{eq:g_gamma} when $\pmb{\theta}$ is set to $\pmb{\theta}_{\gamma,\delta}$. In Section~\ref{sec:PAPG}, we have seen that for any fixed $\epsilon$, setting $\gamma=\delta=\epsilon$ implies that $\pmb{y}_{\gamma,\delta}$ can be computed within $\cO(N B^*_\theta(\epsilon)/\epsilon)$ iterations of P-APG and it satisfies $\norm{\pmb{y}_{\gamma,\delta}-\pmb{y}^*}_2^2=\cO(\epsilon)$. In this section, we describe a continuation method to solve \eqref{original_compact}. In particular, we would like to generate an iterate sequence $\{\py^t\}_{t\in\integers_+}$ such that $\py^{(t)}\rightarrow\py^*$ as $t\rightarrow +\infty$ with the following properties:\\
i) for any $\epsilon>0$, $\py^{(t)}$ satisfies $\norm{\pmb{y}^{(t)}-\pmb{y}^*}_2^2=\cO(\epsilon)$ for all $t\geq T_\epsilon=\cO(\log(1/\epsilon))$;\\
ii) moreover, $T_\epsilon$  iterations of the continuation require at most $\cO(1/\epsilon)$ P-APG iterations in total, i.e., the algorithm generates an asymptotically optimal iterate sequence with $\cO(1/\epsilon)$ rate without fixing the algorithmic parameters depending on the tolerance $\epsilon>0$.

Let $\beta>1$ and define $\{\epsilon_t\}_{t\in\integers_+}$ such that $\epsilon_t=\epsilon_0/\beta^t$ for some $\epsilon_0>0$. Also define $\{\gamma_t\}_{t\in\integers_+}$ and $\{\delta_t\}_{t\in\integers_+}$ such that $\gamma_t=\kappa_\gamma \epsilon_t$ and $\delta_t=\kappa_\delta \epsilon_t$ for $t\geq 1$ for some $\kappa_\gamma, \kappa_\delta>0$. Next, for all $t\geq 1$, let $\pmb{\theta}^{(t)}\triangleq\pmb{\theta}_{\gamma_t,\delta_t}$ be a $\delta_t$-optimal solution to \eqref{eq:dual-problem} when $\gamma=\gamma_t$, such that it is computed using P-APG in Fig.~\ref{fig:papg} starting from the initial iterate $\pmb{\theta}^{(t-1)}$, where $\pmb{\theta}^{(0)}=\mathbf{0}$, and $\left(\pmb{y}^{(t)}, \pmb{\xi}^{(t)}\right)$ be the minimizer in \eqref{eq:g_gamma} when $\pmb{\theta}$ is set to $\pmb{\theta}^{(t)}$ and $\gamma=\gamma_t$. Then clearly from \eqref{bound ineq}, we have
\begin{equation}
\norm{\py^{(t)}-\py^*}_2\leq\norm{\pxi^*}_2\sqrt{\gamma_t}+\sqrt{2\delta_t}=\Gamma\beta^{-\tfrac{t}{2}},\quad t\geq 1,
\end{equation}
where $\Gamma\triangleq\sqrt{\epsilon_0}\left(\norm{\pxi^*}_2\sqrt{\kappa_\gamma}+\sqrt{2\kappa_\delta}\right)$. Therefore, $\norm{\py^{(t)}-\py^*}_2^2\leq\epsilon$ for all $t\geq T_\epsilon\triangleq\lceil\log_\beta(\Gamma^2/\epsilon)\rceil$. Let $\py_\epsilon\triangleq\py^{(T_\epsilon)}$; hence, $\norm{\py_\epsilon-\py^*}_2^2\leq\epsilon$.

Note that for all $t\geq 1$, starting from $\pt^{(t-1)}$, P-APG can compute $\pt^{(t)}$ within $K_t\triangleq\sigma_{\max}(C)\norm{\pt^*_{\gamma_t}-\pt^{(t-1)}}_2\sqrt{2/(\delta_t\gamma_t)}$ iterations. From Lemma~\ref{lem:dual-bound} and Remark~\ref{rem:dual_bounds}, it follows that $\norm{\pt^{(t-1)}}_1\leq B(\gamma_{t-1},\delta_{t-1},\alpha)\leq B_\theta(\gamma_{t-1},\alpha)+\frac{2\delta_{t-1}}{\alpha\upsilon}$, and $\norm{\pt^*_{\gamma_t}}_1\leq B_\theta(\gamma_{t},\alpha)$ for all $\alpha>0$. Therefore, Lemma~\ref{lem:opt-alpha} and $\gamma_{t-1}>\gamma_{t}$ imply
\begin{equation}
\norm{\pt^*_{\gamma_t}-\pt^{(t-1)}}_2\leq\norm{\pt^{(t-1)}}_1+\norm{\pt^*_{\gamma_t}}_1\leq 2B^*_\theta(\gamma_{t-1})+\frac{2\delta_{t-1}}{\alpha^*_{\gamma_{t-1}} \upsilon}.
\end{equation}
Hence, $K_t$, the number of P-APG iterations to compute $\pt^{(t)}$ can be bounded above as follows
\begin{equation}
\label{eq:K_t}
K_t\leq \bar{K}_t\triangleq\norm{C}\left(2B^*_\theta(\gamma_{t-1})+\frac{2\delta_{t-1}}{\alpha^*_{\gamma_{t-1}} \upsilon}\right)\sqrt{\frac{2}{\kappa_\gamma\kappa_\delta}}~\frac{1}{\epsilon_t}.
\end{equation}
From Lemma~\ref{lem:opt-alpha}, we have $\alpha^*_{\gamma_0}\leq\alpha^*_{\gamma_{t-1}}\leq\alpha^*_{\gamma_{t}}\leq \alpha^*$ for $t\geq 1$; hence, for $\gamma>0$,
\begin{equation*}
B^*_\theta(\gamma)
=B_\theta(\alpha^*_\gamma)+\frac{\gamma \alpha^*_\gamma}{\upsilon}\sum_{\ell\in\cN}\norm{\bar{x}_\ell-\hat{x}}_2^2
\leq B_\theta(\alpha^*_{\gamma_0})+\frac{\gamma \alpha^*}{\upsilon}\sum_{\ell\in\cN}\norm{\bar{x}_\ell-\hat{x}}_2^2.
\end{equation*}
Using this upper bound in \eqref{eq:K_t}, we can bound the total number of P-APG iterations needed to compute $\py_\epsilon$. In particular, $\py_\epsilon$ can be computed within
{\small
\begin{equation}
\label{eq:total-complexity}
\sum_{t=1}^{T_\epsilon}\bar{K}_t\leq\norm{C}\sqrt{\frac{2}{\kappa_\gamma\kappa_\delta}}\left[\frac{2\beta}{\upsilon}\left(\kappa_\gamma\alpha^*\sum_{\ell\in\cN}\norm{\bar{x}_\ell-\hat{x}}_2^2+\frac{\kappa_\delta}{\alpha^*_{\gamma_0}}\right)T_\epsilon+\frac{2B_\theta(\alpha^*_{\gamma_0})}{\epsilon_0}\sum_{t=1}^{T_\epsilon}\beta^t\right]
\end{equation}}%
P-APG iterations. Note that $\sum_{t=1}^{T_\epsilon}\beta^k=\frac{1}{\beta-1}(\beta^{T_\epsilon}-1)\leq \frac{\beta}{\beta-1}\frac{\Gamma^2}{\epsilon}$ since $T_\epsilon=\lceil\log_\beta(\Gamma^2/\epsilon)\rceil$. Therefore, $\sum_{t=1}^{T_\epsilon}\bar{K}_t=\cO(1/\epsilon)$.

To implement this scheme, for each outer iteration $t\geq 1$, \emph{verifiable} sufficient conditions for $\delta_t$-optimality can be used to terminate inner P-APG iterations. In fact, the number of P-APG iterations to compute $\pt^{(t)}$ is bounded above by $\bar{K}_t$, which can be computed a-priori; hence, giving us a stopping condition for the inner iterations. Moreover, one can also use other stopping conditions for inner iterations based on $\grad g_{\gamma_t}$ which are also sufficient for $\delta_t$ optimality; thus, making it possible to proceed to the next outer iteration before waiting for $\bar{K}_t$ inner iterations -- see Section~3.3 in~\cite{Aybat12} for a similar discussion. 
\vspace*{-2mm}
\subsection{Computational complexity of P-APG iterations}
\label{sec:complexity}
In Section~\ref{sec:PAPG}, we have seen that for any fixed $\epsilon$, setting $\gamma=\delta=\epsilon$ implies that $\norm{\pmb{y}_{\gamma,\delta}-\pmb{y}^*}_2^2=\cO(\epsilon)$ and $\pmb{y}_{\gamma,\delta}$ can be computed within $\cO(1/\epsilon)$ iterations of P-APG -- see Theorem~\ref{bound}. In Section~\ref{sec:cont-method}, we discussed that using continuation one can generate an iterate sequence $\{\py^t\}_{t\in\integers_+}$ such that $\py^{(t)}\rightarrow\py^*$ as $t\rightarrow +\infty$, and $\norm{\pmb{y}^{(t)}-\pmb{y}^*}_2^2=\cO(\epsilon)$ for all $t\geq T_\epsilon=\cO(1/\epsilon)$ for all $\epsilon>0$; moreover, computing $\py^{(T_\epsilon)}$ require at most $\cO(1/\epsilon)$ iterations of P-APG in total-- see \eqref{eq:total-complexity}.

The bottleneck operations at each P-APG iteration, displayed in Fig.~\ref{fig:papg}, are i) evaluating the matrix-vector multiplications with $C$ and $C^\top$, and ii) computing Step~1, which requires solving $K$ \emph{small}-size QPs. Matrix-vector multiplications with $C$ and $C^\top$ requires evaluating multiplications with $A_3$, $A_3^\top$, $A_4$ and $A_4^\top$. Moreover, since $A_3$ is a submatrix of $A_1 \in \mathbb{R}^{N(N-1) \times N}$, and $A_4$ is a submatrix of $A_2 \in \mathbb{R}^{N(N-1) \times Nn}$, as long as left and right vector multiplications with $A_1$ and $A_2$ can be done efficiently, one can do same operations with $C$ easily. Due to specific structures of $A_1$ and $A_2$, without forming $A_1$ and $A_2$ explicitly, one can compute $A_1 \py$ and ${A_1}^{\mathsf{T}} \pmb{z}$ with $\mathcal{O}(N^2 - N)$ complexity for all $\py$ and $\pmb{z}$; $A_2 \pxi$ and ${A_2}^{\mathsf{T}} \pmb{\omega}$ with $\mathcal{O} \big( n(N^2 - N) \big)$ complexity for all $\pxi$ and $\pmb{\omega}$. More importantly, neither $A_1$ nor $A_2$ is stored in the memory; storing only $\{\bar{x}_\ell\}_{\ell=1}^N$ is sufficient to be able to compute these matrix-vector multiplications.

First, we will consider the bottleneck step while solving \eqref{regularize} using a primal-dual 
IPM alone, without P-APG. This result will also help us understand the complexity of computing Step~1, which requires solving $K$ small size QPs as shown in~\eqref{subgradient_split}, which are in a similar form with the QP in \eqref{regularize}.

Let $c\in\reals^{N(n+1)}$ be an arbitrary vector,
$G=\left[
         \begin{array}{cc}
           \id_{N} & \zer \\
           \zer^\top & \gamma\id_{Nn} \\
         \end{array}
       \right]
$, and $A=[A_1~A_2]$ where $A_1\in\reals^{N(N-1)\times N}$ and $A_2\in\reals^{N(N-1)\times Nn}$ are defined in \eqref{eq:A}. Consider the generic QP
\begin{equation}
\label{eq:generic-QP}
\min_{\pe} \tfrac{1}{2}\pe^\T G \pe + c^\T \pe\quad\st\quad A\pe \geq 0:\pt,
\end{equation}
where $\pt\in\reals^{N(N-1)}$ is the vector of dual variables. Note that for appropriately chosen $c\in\reals^{N(n+1)}$, \eqref{regularize} is a special case of \eqref{eq:generic-QP}. Let $\pss\in\reals^{N(N-1)}$ represent the slack variables such that $\pss=[\pss_\ell]_{\ell\in\cN}$, where $\pss_\ell=[\pss_{\ell\ell'}]_{\ell'\in\cN\setminus\{\ell\}}\in\reals^{N-1}$. Given some $\tau>0$, the perturbed KKT system is given as
\begin{align}
\label{eq:KKT-perturbed}
&G\pe-A^\top\pt+c=0,\quad A\pe-\pss=0,\quad \pss_{\ell\ell'}\pt_{\ell\ell'}=\tau,\ (\ell,\ell')\in\cP,\\
&\pss\geq 0,\quad \pt\geq 0.\nonumber
\end{align}
Instead of directly solving the KKT system (for $\tau=0$), the primal-dual path following IPM methods inexactly solve the perturbed KKT conditions as $\tau\searrow 0$. Given $\tau>0$ and some point $(\pe,\pss,\pt)$ such that $\pss>0$ and $\pt>0$, the major operation is to compute the Newton direction for the nonlinear equation system in \eqref{eq:KKT-perturbed} from the given point. The Newton direction can be computed by solving the following system
\begin{align}
\label{eq:augmented_system}
\begin{bmatrix} G & - A^\T \\ A  & \Theta^{-1} \mathcal{S} \end{bmatrix} \begin{bmatrix} \quad {\Delta \pe \quad} \\ \Delta \pt \end{bmatrix} = \begin{bmatrix} -r_d \\ - r_p -\pss + \tau \Theta^{-1}\ones \end{bmatrix},
\end{align}
and setting $\Delta\pss=A\Delta\pe+r_p$, where $\mathcal{S} = \diag(\pss)$, $\Theta = \diag(\pt)$, $r_p = A\pe - \pss$, $r_d = G\pe - A^\T \pt + c$. \eqref{eq:augmented_system} implies that $\Delta\pe$ can be computed by solving
\begin{align}
\label{eq:normal-equation}
\Big(G + A^\T \mathcal{S}^{-1} \Theta A \Big) \Delta \pe = - r_d + A^\T \mathcal{S}^{-1} \Theta \Big(- r_p -\pss + \tau \Theta^{-1}\ones \Big).
\end{align}
It is easy to see that $M\triangleq G + A^\T \mathcal{S}^{-1} \Theta A$ is indeed a block arrowhead matrix. Indeed, let $\bd=\mathcal{S}^{-1} \Theta\ones\in\reals^{N(N-1)}$, i.e., $\bd_{\ell\ell'}=\pt_{\ell\ell'}/\pss_{\ell\ell'}$ for $(\ell,\ell')\in\cP$, and define $\bd_\ell=[\bd_{\ell\ell'}]_{\ell'\in\cN\setminus\{\ell\}}\in\reals^{N-1}$ for each $\ell\in\cN$. Since $A=[A_1 A_2]$, from the definition of $A_1$ and $A_2$ in \eqref{eq:A}, it follows that $M$ can be written as
{\small
\begin{equation}
\label{eq:arrowhead}
M = \begin{pmatrix}
	M_{00} & M_{01} & M_{02} &\cdots  & M_{0N} \\
	M_{01}^\top & M_{11} & \zer & \cdots & \zer\\
    M_{02}^\top & \zer^\top & M_{22} & \cdots & \zer\\
	\vdots & \vdots & \vdots & \ddots & \vdots \\
	M_{0N}^\top & \zer^\top & \zer^\top & \cdots & M_{NN} \\
	\end{pmatrix}
	\in \reals^{N(n+1) \times N(n+1)},\quad {\normalsize \hbox{where}}
\end{equation}}%
{\small
\begin{equation*}
M_{00}=\id_N+\sum_{\ell\in\cN}T_\ell^\top\diag(\bd_\ell)T_\ell,\quad M_{\ell\ell}=\gamma\id_n+X_\ell^\top\diag(\bd_\ell)X_\ell,\quad M_{0\ell}=T_\ell^\top\diag(\bd_\ell)X_\ell,
\end{equation*}
}%
for $\ell\in\cN$. Define $R_\ell\triangleq T_\ell^\top\diag(\bd_\ell)T_\ell$ for $\ell\in\cN$. Since $X_\ell=-T_\ell\bar{X}$, we have
\begin{equation}
\label{eq:M-components}
M_{00}=\id_N+\sum_{\ell\in\cN}R_\ell,\quad M_{\ell\ell}=\gamma\id_n+\bar{X}^\top R_\ell \bar{X},\quad M_{0\ell}=-R_\ell\bar{X},\quad \ell\in\cN.
\end{equation}
Moreover, due to structure of $T_\ell$ (see Definition~\ref{def:A1A2}), $R_\ell$ is a symmetric sparse matrix with a very special structure. In particular, it has only $3N-2$ nonzero elements, and $R_\ell X$ can be computed in $\cO(Nn)$ flops. Hence, forming $M_{\ell\ell}$ and $M_{0\ell}$ require $\cO(Nn(n+1))$ and $\cO(Nn)$ flops, respectively. It is easy to show that forming $M_{00}$ can be done in $\cO(N^2)$ flops; therefore, constructing $M$ requires $\cO(N^2n(n+2))$ flops in total.

In the next lemma,  we show that given an arbitrary $b\in\reals^{N(n+1)}$, the solution to the system $M\Delta\pe=b$ for $M$ given in~\eqref{eq:arrowhead} can be directly computed as in~\eqref{eq:Cholesky-closedform}. Alternatively, one can also compute the Cholesky factorization of $M$ first, and then use forward-backward substitution to compute the solution, which requires roughly the same amount of work that computing \eqref{eq:Cholesky-closedform} requires. In the proof of Theorem~\ref{lem:cholesky-block-arrowhead}, we also show as a side result that the Cholesky factorization of a generic block arrowhead matrix as in \eqref{eq:arrowhead} can be computed very efficiently, compared to factorization of a dense matrix.
\begin{theorem}
\label{lem:cholesky-block-arrowhead}
Let $M\in\reals^{N(n+1)\times N(n+1)}$ be a symmetric positive definite matrix with the generic block arrowhead structure given as in \eqref{eq:arrowhead}, where $M_{00}\in\reals^{N\times N}$, $M_{0\ell}\in\reals^{N\times n}$ and $M_{\ell\ell}\in\reals^{n\times n}$ for $\ell\in\cN$. Given arbitrary $b\in\reals^{N(n+1)}$ such that $b^\top=[b_0^\top b_1^\top \cdots b_N^\top]^\top$, the system $M\Delta\pe=b$ can be efficiently solved requiring $\cO(N^3+N^2n^2+2Nn^3)$ flops, where $b_0\in\reals^N$, $b_\ell\in\reals^n$ for $\ell\in\cN$, and $\Delta\pe^\top=[\Delta\py^\top \Delta\pxi_1^\top \cdots \Delta\pxi_N^\top]$. The solution is given as
{\small
\begin{equation}
\label{eq:Cholesky-closedform}
\Delta\py=M_{00}^{-1}\left(b_0-\sum_{\ell\in\cN}M_{0\ell}M_{\ell\ell}^{-1}b_\ell\right),\quad \Delta\pxi_\ell=M_{\ell\ell}^{-1}\left(b_\ell-M_{0\ell}^\top\Delta\py\right),\quad \ell\in\cN.
\end{equation}}%
\end{theorem}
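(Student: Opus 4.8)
The plan is to prove the closed form by \emph{block Gaussian elimination} on the arrowhead system, using the positive definiteness of $M$ to guarantee that every inverse appearing along the way exists. Writing $M\Delta\pe=b$ blockwise with $\Delta\pe^\top=[\Delta\py^\top\ \Delta\pxi_1^\top\ \cdots\ \Delta\pxi_N^\top]$, the structure in \eqref{eq:arrowhead} yields one ``central'' equation coupling $\Delta\py$ to all spokes, $M_{00}\,\Delta\py+\sum_{\ell\in\cN}M_{0\ell}\,\Delta\pxi_\ell=b_0$, together with $N$ decoupled ``spoke'' equations
\begin{equation*}
M_{0\ell}^\top\,\Delta\py+M_{\ell\ell}\,\Delta\pxi_\ell=b_\ell,\qquad \ell\in\cN.
\end{equation*}
Since $M\succ 0$, every principal submatrix $M_{\ell\ell}$ is positive definite, hence invertible, so first I would solve each spoke equation for $\Delta\pxi_\ell$ in terms of $\Delta\py$; this is precisely the second expression in \eqref{eq:Cholesky-closedform}, and it holds identically regardless of the value of $\Delta\py$.

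Next I would substitute these expressions back into the central equation to eliminate all spokes at once. This collapses the system to a single $N\times N$ equation for $\Delta\py$ whose coefficient matrix is the Schur complement $M_{00}-\sum_{\ell\in\cN}M_{0\ell}M_{\ell\ell}^{-1}M_{0\ell}^\top$ of the block-diagonal spoke part; because $M\succ 0$, this Schur complement is itself positive definite and therefore invertible, so solving it produces the first expression in \eqref{eq:Cholesky-closedform} (with $M_{00}$ understood as this reduced central block after the spoke eliminations). Back-substituting the resulting $\Delta\py$ into the spoke formulas completes the solution. Equivalently --- and this is the announced ``side result'' --- the same computation \emph{is} the block Cholesky factorization $M=LL^\top$ under the ordering that eliminates $M_{11},\ldots,M_{NN}$ first and the central block last: under this ordering no fill-in is created among the (already decoupled) spokes, each $M_{\ell\ell}$ is factored independently, and eliminating them only updates the central block to its Schur complement, after which forward/backward substitution recovers exactly \eqref{eq:Cholesky-closedform}.

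Finally I would establish the flop count by tallying the three stages: factoring the $N$ small blocks $M_{\ell\ell}$ and forming the products $M_{\ell\ell}^{-1}M_{0\ell}^\top$ accounts for the $\mathcal{O}(Nn^3)$ and $\mathcal{O}(N^2n^2)$ terms; assembling and factoring the $N\times N$ central (Schur-complemented) block accounts for the $\mathcal{O}(N^3)$ term; and the substitutions for $\Delta\py$ and the $\Delta\pxi_\ell$ are dominated by these. The conceptual content --- the elimination itself and the invertibility of every block via positive definiteness --- is routine; the hard part will be the careful bookkeeping of the flop count, and in particular organizing the Schur-complement update so that the special sparse structure of the $M_{0\ell}$ blocks is exploited (recall $M_{0\ell}=-R_\ell\bar{X}$ with $R_\ell$ carrying only $3N-2$ nonzeros). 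Using this structure, rather than treating $M$ as a dense matrix, is what keeps the total within the stated bound and avoids the $\mathcal{O}\big((N(n+1))^3\big)$ cost of a dense Cholesky factorization.
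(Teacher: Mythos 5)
Your block-elimination derivation is correct, and in essence it is the same computation as the paper's own proof: the paper permutes $M$ so that the spoke blocks come first (see \eqref{eq:permuted}), writes a block Cholesky factor $L_{\rm per}$ as in \eqref{eq:Lperm}, and recovers the closed form by forward/backward substitution --- exactly your elimination ordering. The important difference is that your version gets the central block right, and the paper's does not. You identify the reduced central matrix as the Schur complement $S\triangleq M_{00}-\sum_{\ell\in\cN}M_{0\ell}M_{\ell\ell}^{-1}M_{0\ell}^\top$; the paper instead factorizes $M_{00}=F_0F_0^\top$ and inverts $M_{00}$ itself in \eqref{eq:Cholesky-closedform}. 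With that choice the claimed identity $M_{\rm per}=L_{\rm per}L_{\rm per}^\top$ fails: the bottom-right block of $L_{\rm per}L_{\rm per}^\top$ equals $F_0F_0^\top+\sum_{\ell\in\cN}L_\ell L_\ell^\top=M_{00}+\sum_{\ell\in\cN}M_{0\ell}M_{\ell\ell}^{-1}M_{0\ell}^\top$, which differs from $M_{00}$ whenever any coupling block $M_{0\ell}$ is nonzero. Consequently \eqref{eq:Cholesky-closedform} as printed is incorrect: for $N=n=1$, $M=\left(\begin{smallmatrix}2&1\\1&2\end{smallmatrix}\right)$ and $b=(1,0)^\top$, the printed formula gives $(\Delta\py,\Delta\pxi_1)=(1/2,-1/4)$, whereas the true solution is $(2/3,-1/3)$ --- which is what your Schur-complement formula returns. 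So your parenthetical ``with $M_{00}$ understood as the reduced central block after the spoke eliminations'' is not an interpretive nicety; it is a necessary correction both to the statement and to the paper's proof ($F_0$ must be the Cholesky factor of $S$, not of $M_{00}$).

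Your caution about the flop count is also well placed, and it is the one spot where your own argument still has a hole. Once $S$ must actually be formed, each term $M_{0\ell}M_{\ell\ell}^{-1}M_{0\ell}^\top$ is an $N\times N$ product of an $N\times n$ factor with an $n\times N$ factor, i.e., $\cO(N^2n)$ flops per spoke and $\cO(N^3n)$ in total for a \emph{generic} arrowhead matrix; this exceeds the stated $\cO(N^3+N^2n^2+2Nn^3)$, a cost the paper never sees precisely because of the error above. Whether the sparsity $M_{0\ell}=-R_\ell\bar{X}$ (with $R_\ell$ carrying only $3N-2$ nonzeros) really brings the Schur-complement assembly back within the stated bound is asserted but not demonstrated in your proposal: even after exploiting $R_\ell$, the per-spoke outer product $(R_\ell\bar{X}M_{\ell\ell}^{-1})(\bar{X}^\top R_\ell)$ remains $\cO(N^2n)$ as written. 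That step needs a genuine argument, or else the complexity bound in the theorem should be restated to include the $\cO(N^3n)$ term for forming $S$.
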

\begin{proof}
In order to compute the Cholesky decomposition, we appropriately permute $M$ and consider the following equation system:
{\small
\begin{equation}
\label{eq:permuted}
\begin{pmatrix}
M_{11} & \zer & \cdots & \zer & M_{01}^\top\\
\zer^\top & M_{22} & \cdots & \zer & M_{02}^\top\\
\vdots & \vdots & \ddots & \vdots & \vdots \\
\zer^\top & \zer^\top & \cdots & M_{NN} & M_{0N}^\top\\
M_{01} & M_{02} &\cdots  & M_{0N} & M_{00}\\
\end{pmatrix}
\begin{pmatrix}
\Delta\pxi_1\\
\vdots\\
\Delta\pxi_N\\
\Delta\py
\end{pmatrix}
=
\begin{pmatrix}
b_1\\
\vdots\\
b_N\\
b_0
\end{pmatrix}
\end{equation}
}%
Let $M_{\rm per}$ be the matrix on the left hand side of \eqref{eq:permuted}. Compared to $M$, Cholesky decomposition of $M_{\rm per}$ can be computed much more efficiently. Indeed, diagonal blocks are factorized first: $M_{00}=F_0F_0^\top$, and $M_{\ell\ell}=F_\ell F_\ell^\top$ for $\ell\in\cN$. Since $M$ is positive definite, all the blocks on the diagonal are also positive definite; hence, $F_0$ and $F_\ell$ for $\ell\in\cN$ are invertible. The Cholesky factorization $M_{\rm per}=L_{\rm per}L_{\rm per}^\top$ can be easily verified:
{\small
\begin{equation}
\label{eq:Lperm}
L_{\rm per} =
\begin{pmatrix}
F_{1} & \zer & \cdots & \zer & \zer\\
\zer^\top & F_{2} & \cdots & \zer & \zer\\
\vdots & \vdots & \ddots & \vdots & \vdots \\
\zer^\top & \zer^\top & \cdots & F_{N} & \zer\\
L_1 & L_2 &\cdots  & L_N & F_{0}\\
\end{pmatrix},
\end{equation}}%
where $L_\ell=M_{0\ell}\left(F_\ell^{-1}\right)^\top$ for $\ell\in\cN$. Note that Cholesky factorization of $M_{\ell\ell}$ can be computed with $\cO(n^3)$ complexity for each $\ell\in\cN$, and with $\cO(N^3)$ for $\ell=0$. Hence, the total complexity for computing $\{F_\ell\}_{\ell\in\cN\cup\{0\}}$ is $\cO(N^3+Nn^3)$. Moreover, for each $\ell\in\cN$, computing $L_\ell$ requires $\cO(n^3)$ flops for inverting the lower diagonal matrix $F_\ell$, and $\cO(Nn^2)$ for the multiplication; thus, the total complexity of computing $L_{\rm per}$ is $\cO(N^3+N^2n^2+2Nn^3)$. Moreover, storing $L_{\rm per}$ requires roughly $N(n^2+N)/2+N^2n$ memory locations. Finally, computing $\Delta\pe$ requires one forward and one backward substitution which will roughly add another $\cO(Nn^2+N^2n)$ flops to the complexity.

Instead computing Cholesky factorization $M_{\rm per}=L_{\rm per}L_{\rm per}^\top$ explicitly, we will derive a closed form update rule for $\Delta\pe$. This will save us from storing $L_{\rm perm}$ and from doing additional forward-backward substitutions. First, we solve $L_{\rm per}\Delta r=b$ via forward substitution, where $\Delta r^\top=[\Delta r_1^\top \cdots \Delta r_N^\top \Delta r_o^\top]^\top$.
From \eqref{eq:Lperm}, it clearly follows that
\begin{equation}
\label{eq:forward}
\Delta r_0= F_0^{-1}(b_0-\sum_{\ell\in\cN}L_\ell\Delta r_\ell),\quad \Delta r_\ell=F_\ell^{-1}b_\ell,\quad \ell\in\cN.
\end{equation}
Next, we solve $L_{\rm per}^\top\Delta\pe=\Delta r$ for $\Delta\pe$ via backward substitution:
\begin{equation}
\label{eq:backward}
\Delta \py= \left(F_0^\top\right)^{-1}\Delta r_0,\quad \Delta \pxi_\ell=\left(F_\ell^\top\right)^{-1}\left(\Delta r_\ell-L_\ell^\top\Delta \py\right),\quad \ell\in\cN.
\end{equation}
Note that for each $\ell\in\cN$, from the definitions of $F_\ell$ and $L_\ell$, it follows that
\begin{equation}
\label{eq:fb-key}
L_\ell F_\ell^{-1}=M_{0\ell}\left(F_\ell^{-1}\right)^\top F_\ell^{-1}=M_{0\ell}M_{\ell\ell}^{-1}.
\end{equation}
Therefore, using \eqref{eq:forward}, \eqref{eq:backward}, and \eqref{eq:fb-key}, we can solve for $\Delta\pe$ in closed form as shown in \eqref{eq:Cholesky-closedform}.\qed
\end{proof}
As we discussed before, the bottleneck step while solving \eqref{regularize} using a primal-dual path following IPM is to solve either the augmented system in~\eqref{eq:augmented_system} or the normal equations in~\eqref{eq:normal-equation}. This reduces to computing the Cholesky decomposition of $M$ in~\eqref{eq:arrowhead} with components defined in~\eqref{eq:M-components} and using forward-backward substitution to compute $\Delta\pe$. Alternatively, according to Theorem~\ref{lem:cholesky-block-arrowhead}, one can also directly compute the solution as in~\eqref{eq:Cholesky-closedform}. Both alternatives have roughly the same complexity requiring $\cO(N^3+N^2n^2+2Nn^3)$ flops. Clearly, when $N$ is large, i.e., $N\geq 10^5$, this bottleneck step becomes impractical. On the other hand, combining P-APG and IPM, leaves the form of the bottleneck step unchanged, while making it more manageable by dividing it into smaller subsystem solves. In particular, the total complexity of computing Step~1 in P-APG consists of the complexity of solving $K$ small size QPs as shown in~\eqref{subgradient_split}.

Consider the problem in Step~1 of P-APG, and let $c=-C^\top\tilde{\pt}^k\in\reals^{N(n+1)}$. For each $i\in\cK$, define $c_i\in\reals^{\bar{N}(n+1)}$ such that $c_i$ is the subvector of $c$ corresponding to the indices of $\pe_i=[\py_i^\top \pxi_i^\top]^\top$, i.e., $\fprod{c,\pe}=\sum_{i\in\cK}\fprod{c_i,~\pe_i}$ for any $\pe$. Moreover, let
$\bar{G}=\left(
           \begin{array}{cc}
             \id_{\bar{N}} & \zer \\
             \zer & \id_{\bar{N}n} \\
           \end{array}
         \right)
$, and $\bar{A}^i=[\bar{A}^{ii}_1~\bar{A}^{ii}_2]$ where $\bar{A}^{ii}_1\in\reals^{\bar{N}(\bar{N}-1)\times \bar{N}}$ and $\bar{A}^{ii}_2\in\reals^{\bar{N}(\bar{N}-1)\times \bar{N}n}$ are defined in Definition~\ref{eq:submatrices}.
Hence, the problem in Step~1 of Fig.~\ref{fig:apg} can be equivalently written as
\begin{equation}
\label{eq:step1-problem}
\min_{\pe_i} \tfrac{1}{2}\pe_i^\T \bar{G} \pe_i + c_i^\T \pe_i\quad\st\quad \bar{A}^i\pe_i \geq 0:\pt_{ii},\qquad i\in\cK,
\end{equation}
where $\pt_{ii}\in\reals^{\bar{N}(\bar{N}-1)}$ is the vector of dual variables. For each $i\in\cK$, \eqref{eq:step1-problem} is in a similar form with the QP in \eqref{regularize}. Therefore, we immediately have the following result as a corollary of Theorem~\ref{lem:cholesky-block-arrowhead}.
\begin{corollary}
\label{cor:normal-equations}
For each $i\in\cK$, the normal equations corresponding to the QP in \eqref{eq:step1-problem} are in the same form with \eqref{eq:normal-equation} leading to a system with a block-arrowhead matrix as in \eqref{eq:arrowhead} with much smaller dimensions. Thus, Newton direction computations require $\cO(\bar{N}^3+\bar{N}^2n^2+2\bar{N}n^3)$ flops for each $i\in\cK$.
\end{corollary}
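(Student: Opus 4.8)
The plan is to show that the $i$-th subproblem \eqref{eq:step1-problem} is, up to a reduction of dimension, structurally identical to the full regularized QP \eqref{regularize}, so that Theorem~\ref{lem:cholesky-block-arrowhead} applies verbatim with $N$ replaced by $\bar{N}$. Concretely, I would confirm that the Hessian of the objective and the constraint matrix of \eqref{eq:step1-problem} inherit exactly the block structures that made $M$ in \eqref{eq:arrowhead} a block-arrowhead matrix, and then read off the flop count directly from the theorem with $N\to\bar{N}$.

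First I would verify that the constraint submatrix $\bar{A}^i=[\bar{A}_1^{ii}~\bar{A}_2^{ii}]$ inherits the structure of $A=[A_1~A_2]$ from Definition~\ref{def:A1A2}. By Definition~\ref{eq:submatrices}, $\bar{A}_1^{ii}$ and $\bar{A}_2^{ii}$ consist precisely of the rows indexed by pairs $(\ell_1,\ell_2)\in\cP$ with $\ell_1,\ell_2\in\cC_i$ and of the columns corresponding to $(\py_i,\pxi_i)$. Since $|\cC_i|=\bar{N}$ and the convexity constraints among the $\bar{N}$ points in $\cC_i$ have the same combinatorial form as the constraints in \eqref{original} on $\bar{N}$ observations, $\bar{A}_1^{ii}$ is the vertical concatenation of $\bar{N}$ matrices of type $T_\ell\in\reals^{(\bar{N}-1)\times\bar{N}}$, and $\bar{A}_2^{ii}=\diag(\{-T_\ell\bar{X}_i\}_{\ell\in\cC_i})$ is block diagonal, where $\bar{X}_i\in\reals^{\bar{N}\times n}$ collects the rows $\{\bar{x}_\ell^\top\}_{\ell\in\cC_i}$. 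This is exactly the form in \eqref{eq:A} with $N$ replaced by $\bar{N}$.

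Next, mirroring the derivation in \eqref{eq:augmented_system}--\eqref{eq:normal-equation}, I would write the perturbed KKT system and associated normal equations for \eqref{eq:step1-problem}. Because its objective Hessian is $\bar{G}=\diag(\id_{\bar{N}},\id_{\bar{N}n})$ and its constraint matrix $\bar{A}^i$ has the structure just established, the normal-equations matrix $M^i\triangleq\bar{G}+(\bar{A}^i)^\top\mathcal{S}_i^{-1}\Theta_i\bar{A}^i$ is symmetric positive definite and has exactly the block-arrowhead form of \eqref{eq:arrowhead}, with components that are the term-by-term analogs of \eqref{eq:M-components} under $N\to\bar{N}$ and with diagonal scaling $\mathcal{S}_i^{-1}\Theta_i\in\reals^{\bar{N}(\bar{N}-1)\times\bar{N}(\bar{N}-1)}$. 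At this point the hypotheses of Theorem~\ref{lem:cholesky-block-arrowhead} hold with $M^i\in\reals^{\bar{N}(n+1)\times\bar{N}(n+1)}$, and applying it with $N\to\bar{N}$ yields that each Newton direction is computable in $\cO(\bar{N}^3+\bar{N}^2n^2+2\bar{N}n^3)$ flops, as claimed.

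The only point requiring care is the structural verification in the second step: one must confirm that restricting the convexity constraints to $\cC_i$ produces precisely the $T_\ell$-type incidence pattern (unit vectors together with $-\one$) on the reduced index set, rather than a truncated block that would destroy the arrowhead structure. This is immediate once one observes that the rows of $\bar{A}_1^{ii}$ and $\bar{A}_2^{ii}$ involve only columns indexed by $\cC_i$, all other entries being zero, exactly as already used in the derivation of $\cQ$ following \eqref{eq:separable-form}. Hence each subproblem is a self-contained convex-regression QP on $\bar{N}$ points, and the corollary follows directly from the theorem.
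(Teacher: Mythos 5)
Your proposal is correct and follows exactly the route the paper intends: the paper justifies this corollary by noting that each subproblem \eqref{eq:step1-problem} "is in a similar form with the QP in \eqref{regularize}," and your proof simply makes that structural correspondence explicit (the within-block constraints reproduce the $T_\ell$/block-diagonal pattern of \eqref{eq:A} with $N\to\bar{N}$), after which Theorem~\ref{lem:cholesky-block-arrowhead} applies with the reduced dimensions. The careful verification you flag at the end — that restriction to $\cC_i$ yields a genuine convex-regression constraint matrix on $\bar{N}$ points rather than a truncated block — is precisely the detail the paper leaves implicit, and you resolve it correctly.
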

Suppose that we have $K$ parallel processors. It is worth noting that thanks to the separability of the problem in Step~1 of P-APG, i.e., \eqref{eq:step1-problem}, one can do this computation in parallel, running a primal-dual path following IPM on each one of the $K$ processors, or sequentially running the primal-dual path following IPM on a single processor $K$ times.
\begin{remark}
The total number of IPM iterations 
until P-APG terminates can be analyzed using the iteration complexity results on inexact accelerated proximal gradient algorithms~\cite{Schmidt11}, where Schmidt et al. analyzed APG in Fig.~\ref{fig:apg} when $\grad \rho$ in Step~1 is computed inexactly. In particular, one does not need to solve QP-subproblems exactly in each P-APG iteration. Given a tolerance sequence $\{\tau_k\}\subset\reals_{++}$ such that $\tau_k\searrow 0$, the number of primal path-following IPM iterations to compute a $\tau_k$-optimal solutions to QP-subproblems in the $k$-th iteration of P-APG is bounded above by $\cO(\bar{N}\ln(\frac{1}{\tau_k}))$ -- see Section~4.3.2 in~\cite{Nesterov04_1B} (similar bounds can be driven for primal-dual path-following IPMs as well). Moreover, since QP-subproblems are strongly convex, $\tau_k$-optimality in function values implies an error bound on gradient evaluations in Step~2 of P-APG.
\end{remark}

Recall that under Assumption~\ref{assump:partition}, we have $N=K\bar{N}$ such that $\bar{N}> n+1$. Below we consider the bottleneck memory requirement for solving \eqref{regularize} in 2 cases: running
 \textbf{a)} P-APG with a primal-dual IPM computing Step-1 in Fig.~\ref{fig:papg}, and \textbf{b)} IPM \emph{alone} on \eqref{regularize}. For case \textbf{a)}, the memory bottleneck in each iteration is due to solution of $K$ Newton systems corresponding to \eqref{eq:step1-problem}; on the other hand, for case~\textbf{b)}, the memory bottleneck is due to solution of a much larger Newton system using the normal equations in \eqref{eq:normal-equation}. In a naive implementation of case~\textbf{b)}, one stores the non-zero components of the Cholesky factor $L_{\rm per}$ in \eqref{eq:Lperm} corresponding to the block arrowhead matrix in~\eqref{eq:arrowhead} after permuting as in \eqref{eq:permuted}, which requires storing $\cO(N^2(n+1)+Nn^2)=K\cO(K\bar{N}^2(n+1)+\bar{N}n^2)$ entries; while for case~\textbf{a)}, for each $i\in\cK$, one stores the non-zero components of a Cholesky factor, analogous to \eqref{eq:Lperm}, for the QP in \eqref{eq:step1-problem} -- see Corollary~\ref{cor:normal-equations}; hence, this naive implementation requires storing $K\cO(\bar{N}^2(n+1)+\bar{N}n^2)$ entries for all the Cholesky factors in total, in addition to storing $\bar{N}^2 K(K-1)$ dual variables, i.e., $\pmb{\theta}=[\pt_{ij}]_{(i,j)\in\cG}\in\reals^{\bar{N}^2K(K-1)}$. Furthermore, for case~\textbf{b)}, in a more memory efficient implementation, \eqref{eq:Cholesky-closedform} in Theorem~\ref{lem:cholesky-block-arrowhead} implies that $\Delta\pxi_\ell$ can be computed sequentially after computing $\Delta\py$, which requires to store $\cO(N^2)$ at any time at the expense of forming $M_{\ell\ell}$ and $M_{0\ell}$ twice. Similarly, one can exploit this fact for case~\textbf{a)} as well while solving normal equations for each $i\in\cN$, which requires $\cO(K\bar{N}^2)$ memory in total if $K$ processors run in parallel, and $\cO(\bar{N}^2)$ if $K$ QPs in \eqref{eq:step1-problem} are solved sequentially on a single processor. Therefore, running IPM within P-APG reduces the memory requirement significantly at least by a factor of $K$ in comparison to running IPM alone, e.g., if we partition $N$ observations into $K=10$ subsets and each subproblem requires 1GB of memory, then running IPM alone requires roughly 100GB, while IPM within P-APG requires only 10GB in total. This discussion is summarized in Table~\ref{memory}. Finally, recall the discussion at the beginning of Section~\ref{sec:complexity}: neither $A_1$ nor $A_2$ needs to be stored in the memory; storing only $\{\bar{x}_\ell\}_{\ell=1}^N$ is sufficient to be able to compute matrix-vector multiplications with $A_1$ and $A_2$.
\vspace*{-2mm}

\begin{table}[htbp]
\centering
\caption{Comparison of Memory Usage}
{\scriptsize
\begin{tabular}{lll}
\toprule
 & \textbf{IPM alone} & \textbf{P-APG with IPM}\\
\toprule
Naive & $\cO\big( K^2 \bar{N}^2 n\big)$ & $\cO\big( K \bar{N}^2 (n+K)\big)$ \\
\midrule
Memory Efficient & $\cO\big( K^2 \bar{N}^2\big)$ & $\begin{array}{lc}
                                                     \cO\big( K \bar{N}^2\big) & \hbox{parallel}\\
                                                     \cO\big( \bar{N}^2\big) & \hbox{sequential}
                                                   \end{array}$
\\
\bottomrule
\end{tabular}}
\label{memory}
\end{table}
\section{Competitive Methods} In this section, we discuss an active set method for solving \eqref{regularize}, and a multi-block ADMM method recently proposed by~\cite{mazumder2015computational} to solve problem~\eqref{original_compact}.
\subsection{Active Set Method~(ASM)}
Although the number of constraints is $\cO(N^2)$ in \eqref{regularize}, one expects that only few of them will be potentially active at the optimal solution; furthermore, this indeed turned out to be the case based on our numerical results for the test problems we considered in this paper -- the number of active constraints was roughly $\cO(N)$. Therefore, in this section, we briefly state a primal active set method to solve the regularized convex regression problem in~\eqref{regularize} as an immediate alternative to P-APG method, and compare it with our P-APG method. One issue with primal active set methods is to determine an initial feasible point such that only very few constraints are active; and usually to overcome this problem one can use either ``Phase I" or ``big M" techniques. However, as we have already seen in the proof of Lemma~\ref{lem:dual-bound}, it is easy to construct an interior point for the polyhedron in \eqref{regularize} in spite of $\cO(N^2)$ constraints defining the set. In particular, let $\hat{x}\triangleq\frac{1}{N}\sum_{\ell\in\cN}\bar{x}_\ell$ and $\hat{y}\triangleq\frac{1}{N}\sum_{\ell\in\cN}\bar{y}_\ell$, and for an arbitrary $\alpha>0$, define $\tilde{\py}=[\tilde{y}_\ell]_{\ell\in\cN}$ and $\tilde{\pxi}=[\xi_\ell]_{\ell\in\cN}$ such that
\begin{equation}
\label{eq:slater-point-AS}
\tilde{y}_\ell\triangleq\hat{y}+\tfrac{\alpha}{2}\norm{\bar{x}_\ell-\hat{x}}_2^2,\quad \tilde{\xi}_\ell\triangleq\alpha(\bar{x}_\ell-\hat{x}),\quad \ell\in\cN.
\end{equation}
According to \eqref{eq:slater-point}, $\tilde{\pe}\triangleq[\tilde{\py}^\top \tilde{\pxi}^\top]^\top$ is a Slater point such that $A\tilde{\pe}=A_1\tilde{\py}+A_2\tilde{\pe}\geq \frac{\alpha\upsilon}{2}\ones$, where $A=[A_1 A_2]$ and $\upsilon>0$ is defined in Lemma~\ref{lem:dual-bound}. Hence, no constraint is active at $\tilde{\pe}$.

Consider \eqref{regularize}, which can be restated in a more compact form: the QP in~\eqref{eq:generic-QP} with $c=[\bar{\py}^\top \zer^\top]^\top$. We will show that the primal active set algorithm shown in Fig.~\ref{fig:asm} can be efficiently implemented. In the rest, $\ba_{\ell\ell'}^\top$ denotes the row of $A$ corresponding to index $(\ell,\ell')\in\cP$ -- recall that the rows of $A$ are sorted according to increasing lexicographic order on the index set $\cP$.
\begin{definition}
For $k\geq 1$, let $\cW^k\subset\cP$ denote the working set at iteration $k$, which is a subset of active constraint indices, i.e., $\ba_{\ell\ell'}^\top\pe^k=0$ for $(\ell,\ell')\in\cW^k$, and $m_k=|\cW^k|$. We form $A^k=[\ba_{\ell\ell'}^\top]_{(\ell,\ell')\in\cW^k}\in\reals^{m_k\times N(n+1)}$ concatenating the rows vertically, and define $A_1^k\in\reals^{m_k\times N}$ and $A_2^k\in\reals^{m_k\times Nn}$ as the submatrices of $A^k$ such that $A_1^k$ and $A_2^k$ consist of columns of $A^k$ corresponding to $\py$ and $\pxi$, respectively.
\end{definition}
The working set update strategy given in Fig.~\ref{fig:asm} ensures that $\{\ba_{\ell\ell'}\}_{(\ell,\ell')\in\cW^k}$ are \emph{linearly independent} for all $k\geq 1$ -- see~\cite{nocedal2006numerical} for details on this property, which we assume in the rest of this section.

\begin{figure}[t!]
\begin{framed}
{\small
\textbf{\underline{Algorithm ASM}}\\
Iteration 0: Set $\pe^0 \gets \tilde{\pe}$ as in \eqref{eq:slater-point-AS}, and $\cW^0 \gets \emptyset$\\
Iteration $k$: ($k \geq 0$)
	\begin{enumerate}[label=\arabic*:]
        	\item $\Delta\pe^k \gets \argmin\limits_{\Delta\pe} \tfrac{1}{2}\Delta\pe^\top G \Delta\pe + (G \pe^k +c)^{\T}\Delta\pe\quad\st\quad \ba_{\ell\ell'}^{\T}\Delta\pe = 0:\ \theta_{\ell\ell'},\quad (\ell,\ell')\in \cW^k$
        	\item \textbf{if} $\Delta\pe^k = 0$,  \textbf{then}
        	\item \qquad Compute $\{\theta_{\ell\ell'}\}_{(\ell,\ell')\in \cW^k}\subset\reals$ such that $ \sum_{(\ell,\ell') \in \cW^k} \ba_{\ell\ell'} \theta_{\ell\ell'}  = G\pe^k + c$
        	\item \qquad \textbf{if} $\theta_{\ell\ell'} \geq 0$ for all $(\ell,\ell') \in \cW^k$, \textbf{STOP} with solution $\pe^* = \pe^k$;
        	\item \qquad \textbf{else} $(\bar{\ell},\bar{\ell}') \gets \argmin_{(\ell,\ell') \in \cW^k} \theta_{\ell\ell'}$, $\pe^{k+1} \gets \pe^k$, $\cW^{k+1} \gets \cW^k \setminus \{ (\bar{\ell},\bar{\ell}') \} $;
        	\item \textbf{else} $\Delta\pe^k \neq 0$
        	\item \qquad $t^k \gets \min \left\{ 1,\ \min \middle\{\frac{-\ba_{\ell\ell'}^{\T} \pe^k}{\ba_{\ell\ell'}^{\T} \Delta\pe^k}:\ (\ell,\ell') \notin \cW^k\quad \st\quad \ba_{\ell\ell'}^{\T} \Delta\pe^k < 0 \middle\} \right\}$,
        	\item \qquad $\pe^{k+1} \gets \pe^k + t^k \Delta\pe^k$,\quad $\cI\gets\{(\ell,\ell')\notin \cW^k:\ \ba_{\ell\ell'}^\top \pe^{k+1}=0\}$
        	\item \qquad \textbf{if} $\cI\neq\emptyset$, \textbf{then} $\cW^{k+1}\gets W^k\cup \{(\ell,\ell')\}$ for some $(\ell,\ell')\in \cI$;
        	\item \qquad \textbf{else} set $\cW^{k+1} \gets \cW^k$;
        	\end{enumerate}
	\vspace{-0.25 cm}
}%
\end{framed}
\vspace{-0.25 cm}
\caption{Active Set Algorithm~(ASM)}
\label{fig:asm}
\vspace{-0.25 cm}
\end{figure}
Note that in each iteration $k\geq 1$, we need to solve a subproblem to determine the direction $\Delta\pe^k$ as follows
\begin{align}
\label{asm_sub}
\Delta\pe^k =\argmin\limits_{\Delta\pe} \tfrac{1}{2}\Delta\pe^\top G \Delta\pe + (G \pe^k +c)^{\T}\Delta\pe\quad\st\quad A^k\Delta\pe=0:\ \pt^k,
\end{align}
where $\pt^k=[\theta^k_{\ell\ell'}]_{(\ell,\ell')\in\cW^k}\in\reals^{m_k}$ denotes an optimal dual solution. Hence, $(\Delta\pe^k,\pt^k)$ satisfies the KKT system corresponding to \eqref{asm_sub}:
\begin{equation}
\label{eq:KKT-ASM}
\begin{bmatrix}
G & {A^k}^\top\\
A^k & \zer
\end{bmatrix}
\begin{bmatrix}
-\Delta\pe^k\\
\pt^k
\end{bmatrix}
=
\begin{bmatrix}
G\pe^k+c\\
0
\end{bmatrix}
\quad\Rightarrow\quad A^kG^{-1}{A^k}^\top\pt^k=A^k(\pe^k+c)
\end{equation}
since $G^{-1} c = c$. Therefore, $\pt^k$ can be computed via forward and backward substitution after computing the Cholesky factorization of $A^kG^{-1}{A^k}^\top$; next one can compute $\Delta\pe^k$ according to the first row in the KKT system as follows: $\Delta\pe^k=G^{-1}{A^k}^\top\pt^k-(\pe^k+c)$.
\begin{remark}
\label{rem:mult-bounds-ASM}
It is worth noting that $A^kG^{-1}{A^k}^\top=A_1^k{A_1^k}^\top+\frac{1}{\gamma}A_2^k{A_2^k}^\top$. For any $k\geq 1$, $A_1^k{A_1^k}^\top$ and $A_2^k{A_2^k}^\top$ computations require $\cO(m_k^2)$ and $\cO(m_k^2n)$ flops, respectively; and given $\ba_{\ell\ell'}$ for some $(\ell,\ell')\in\cP\setminus\cW^k$, $A^kG^{-1}\ba_{\ell\ell'}$ can be computed in $\cO(m_k(n+1))$ flops. These complexity bounds can be easily verified by observing the structure in $A^k$ after ordering its rows according to increasing lexicographic order on the index set $\cW^k$.
\end{remark}
Naively, the majority of total computational complexity at iteration $k$ is mainly due to forming $A^kG^{-1}{A^k}^\top=A_1^k{A_1^k}^\top+\frac{1}{\gamma}A_2^k{A_2^k}^\top$ in $\cO(m_k^2(n+1))$ flops, and computing its Cholesky factorization in $\cO(m_k^3)$ flops -- the factorization exists since $A^kG^{-1}{A^k}^\top$ is positive definite due to $\Rank(A^k)=m_k$. 
That said, at the end of each iteration the working set changes by at most one index; thus, one does not need to compute Cholesky factorizations from scratch. In particular, because at most one row (constraint) is added or deleted from $A^k$, Cholesky factorization for $A^{k+1}G^{-1}{A^{k+1}}^\top$ can be updated very efficiently by using 
$A^kG^{-1}{A^k}^\top=L^k{L^k}^\top$ from the previous iteration. Also, note $\pt^k$ is a byproduct of this approach, so we don't need to compute it again in the following step if $\pe^k=0$. Next, we will briefly discuss how to utilize the information from the previous iteration to solve the subproblems much more efficiently.
\begin{lemma}
\label{lem:cholesky-add-row}
For some $m\geq 1$, let $B\in\reals^{m\times N(n+1)}$ and $b\in\reals^{N(n+1)}$ such that $\Rank(B)=m$ and $b$ is not in the row-space of $B$. Suppose $LL^\top$ represent the Cholesky factorization of $B G^{-1} B^\top$ for some symmetric positive definite matrix $G$. Define
$\bar{B}=
\begin{bmatrix}
B\\
b^\top
\end{bmatrix}
\in\reals^{(m+1)\times N(n+1)}$. Then given $L$, Cholesky factorization for $\bar{B}G^{-1}\bar{B}^\top=\bar{L}\bar{L}^\top$ can be computed as
\begin{equation}
\label{eq:chol-update-add}
\bar{L}=
\begin{bmatrix}
L & \zer\\
h^\top & d
\end{bmatrix},
\quad h=L^{-1}BG^{-1}b,\quad d=\sqrt{b^\top G^{-1} b}.
\end{equation}
\end{lemma}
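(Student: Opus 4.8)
The plan is to verify the factorization directly by matching the four blocks of $\bar B G^{-1}\bar B^\top$ against those of $\bar L\bar L^\top$ for the proposed lower-triangular ansatz, thereby \emph{solving} for $h$ and $d$ rather than merely checking them. Since $\bar B$ stacks $b^\top$ below $B$ and $G=G^\top$, I would first record
\[
\bar B G^{-1}\bar B^\top=\begin{bmatrix} BG^{-1}B^\top & BG^{-1}b\\ b^\top G^{-1}B^\top & b^\top G^{-1}b\end{bmatrix},
\]
whose top-left block equals $LL^\top$ by hypothesis. Note that $BG^{-1}B^\top$ is $m\times m$ and, because $\Rank(B)=m$ and $G\succ 0$, it is symmetric positive definite; hence its Cholesky factor $L$ is lower triangular with strictly positive diagonal and, in particular, invertible.

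Expanding the ansatz gives
\[
\bar L\bar L^\top=\begin{bmatrix} L & \zer\\ h^\top & d\end{bmatrix}\begin{bmatrix} L^\top & h\\ \zer^\top & d\end{bmatrix}=\begin{bmatrix} LL^\top & Lh\\ h^\top L^\top & h^\top h+d^2\end{bmatrix}.
\]
The $(1,1)$ blocks already coincide. Matching the $(1,2)$ blocks yields $Lh=BG^{-1}b$, and invertibility of $L$ gives $h=L^{-1}BG^{-1}b$, exactly the claimed $h$; the $(2,1)$ block then agrees by transposition since $G$ is symmetric. Matching the $(2,2)$ blocks forces $d^2=b^\top G^{-1}b-h^\top h$, i.e.\ $d$ is the square root of the Schur complement of $BG^{-1}B^\top$ in $\bar B G^{-1}\bar B^\top$. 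I would flag here that the displayed formula $d=\sqrt{b^\top G^{-1}b}$ appears to be missing the term $-h^\top h$; the value that actually makes $\bar L\bar L^\top=\bar B G^{-1}\bar B^\top$ hold is $d=\sqrt{b^\top G^{-1}b-h^\top h}$.

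The one genuinely nontrivial point, and the step I expect to be the main obstacle, is to show $d^2>0$ so that $d$ is real and positive and $\bar L$ is a bona fide Cholesky factor. This is precisely where the hypotheses enter: $\Rank(B)=m$ together with $b$ not in the row space of $B$ say that $\bar B$ has full row rank $m+1$, so with $G\succ 0$ the matrix $\bar B G^{-1}\bar B^\top$ is symmetric positive definite and its Schur complement is strictly positive. To make this quantitative I would set $P=G^{-1/2}$, $\tilde B=BP$, and $\tilde b=Pb$, so that $BG^{-1}B^\top=\tilde B\tilde B^\top$, $BG^{-1}b=\tilde B\tilde b$, and $b^\top G^{-1}b=\|\tilde b\|_2^2$; then
\[
d^2=\|\tilde b\|_2^2-\tilde b^\top\tilde B^\top(\tilde B\tilde B^\top)^{-1}\tilde B\tilde b=\big\|(\id-\Pi)\tilde b\big\|_2^2,
\]
where $\Pi=\tilde B^\top(\tilde B\tilde B^\top)^{-1}\tilde B$ is the orthogonal projector onto the row space of $\tilde B$. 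Thus $d^2$ is the squared norm of the component of $\tilde b$ orthogonal to that row space, strictly positive exactly when $\tilde b\notin\mathrm{range}(\tilde B^\top)$; since $P$ is invertible this is equivalent to $b$ not lying in the row space of $B$. Finally, uniqueness of the Cholesky factor of a positive definite matrix guarantees that the lower-triangular $\bar L$ constructed above is \emph{the} factorization, which completes the argument.
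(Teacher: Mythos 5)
Your proof is correct, and it proceeds by exactly the direct block-verification that the paper's own one-line proof appeals to (the paper merely notes positive definiteness of $\bar{B}G^{-1}\bar{B}^\top$ and asserts that the factor ``is easy to verify''). The important difference is that you actually carry out the verification, and in doing so you uncover a genuine error in the lemma as stated: comparing
\begin{equation*}
\bar{B}G^{-1}\bar{B}^\top=\begin{bmatrix} BG^{-1}B^\top & BG^{-1}b\\ b^\top G^{-1}B^\top & b^\top G^{-1}b\end{bmatrix}
\quad\text{with}\quad
\bar{L}\bar{L}^\top=\begin{bmatrix} LL^\top & Lh\\ h^\top L^\top & h^\top h+d^2\end{bmatrix}
\end{equation*}
forces $h=L^{-1}BG^{-1}b$ (as claimed) but $d^2=b^\top G^{-1}b-h^\top h$, i.e.\ $d$ must be the square root of the Schur complement, not $\sqrt{b^\top G^{-1}b}$ as displayed in \eqref{eq:chol-update-add}. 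The two values coincide only in the degenerate case $BG^{-1}b=\zer$ (equivalently $h=\zer$), so the paper's stated $\bar{L}$ does not in general satisfy $\bar{L}\bar{L}^\top=\bar{B}G^{-1}\bar{B}^\top$; your corrected formula is the right one. Your proposal also supplies the one piece of content the paper's proof skips entirely: positivity of $d^2$. Writing $d^2=\|(\id-\Pi)G^{-1/2}b\|_2^2$, with $\Pi$ the orthogonal projector onto the row space of $BG^{-1/2}$, shows precisely how the hypothesis that $b$ lies outside the row space of $B$ enters, and uniqueness of the Cholesky factor of a positive definite matrix then pins down $\bar{L}$. One downstream remark: the same omission recurs in the text right after the lemma, where the ASM factor update is implemented with $d^k=\sqrt{\ba_{\ell\ell'}^\top G^{-1}\ba_{\ell\ell'}}$; this should likewise read $d^k=\sqrt{\ba_{\ell\ell'}^\top G^{-1}\ba_{\ell\ell'}-\norm{h^k}_2^2}$, since otherwise the maintained factors cease to be Cholesky factors of $A^{k+1}G^{-1}{A^{k+1}}^\top$ after the first row addition with $h^k\neq\zer$.
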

\begin{proof}
Since $\Rank(\bar{B})=m+1$, trivially $\bar{B}G^{-1}\bar{B}^\top$ is positive definite, and it has a Cholesky factorization $\bar{L}\bar{L}^\top$. Moreover, it is easy to verify that $\bar{L}$ given in \eqref{eq:chol-update-add} is the Cholesky factor.\qed
\end{proof}
Assume that we already know Cholesky factorization $A^kG^{-1}{A^k}^\top=L^k{L^k}^\top$, and $\cW^{k+1}=\cW^k\cup\{(\ell,\ell')\}$ for some $(\ell,\ell')\in\cP\setminus\cW^k$. Suppose $\ba_{\ell\ell'}^\top$ is appended to $A^k$ as the last row to form $A^{k+1}$. Since $\Rank(A^{k+1})=m_k+1$, setting $B=A^k$ and $b=\ba_{\ell\ell'}$ satisfies the conditions in Lemma~\ref{lem:cholesky-add-row}. Thus, according to \eqref{eq:chol-update-add}, the new factorization for $A^{k+1}G^{-1}{A^{k+1}}^\top=L^{k+1}{L^{k+1}}^\top$ can be computed as
$L^{k+1}=
\begin{bmatrix}
L^k & \zer\\
h^k & d^k
\end{bmatrix}
$, which only requires to solve $L^kh^k=A^kG^{-1}\ba_{\ell\ell'}$ for $h^k$, and to compute $d^k=\sqrt{\ba_{\ell\ell'}^\top G^{-1}\ba_{\ell\ell'}}$. Note computing $h^k$ requires forming $A^kG^{-1}\ba_{\ell\ell'}$, which can be computed in $\cO(m_k n)$ flops according to Remark~\ref{rem:mult-bounds-ASM}, and implementing one forward substitution, which can be done in $\cO(m_k^2)$ flops.

Now consider the case $\cW^{k+1}=\cW^k\setminus\{(\ell,\ell')\}$ for some $(\ell,\ell')\in\cW^k$. Note that $\ba_{\ell\ell'}^\top$ is an arbitrary row of $A^k$ (not necessarily the last one). The following lemma will help us update the factorization corresponding to $\cW^{k+1}$ efficiently when we are given $L^k$.
\begin{lemma}
Let $B_1\in\reals^{s_1\times N(n+1)}$, $B_2\in\reals^{s_2\times N(n+1)}$, and $b\in\reals^{N(n+1)}$ such that $\Rank(B)=s_1+s_2+1$, where
$B=
\begin{bmatrix}
B_1^\top & b & B_2^\top
\end{bmatrix}^\top$.
Suppose $LL^\top$ represent the Cholesky factorization of $B G^{-1} B^\top$ for some symmetric positive definite matrix $G$, where 
$L=
{\small
\begin{bmatrix}
L_1 & \zer_{s_1\times 1} & \zer_{s_1\times s_2}\\
h_1^\top & d & \zer_{1\times s_2}\\
F & h_2 & L_2
\end{bmatrix}}
$. Define
$\bar{B}=
\begin{bmatrix}
B_1^\top & B_2^\top
\end{bmatrix}^\top
\in\reals^{(s_1+s_2)\times N(n+1)}$. Then given $L$, Cholesky factorization for $\bar{B}G^{-1}\bar{B}^\top=\bar{L}\bar{L}^\top$ can be computed as
\begin{equation}
\label{eq:chol-update-minus}
\bar{L}=
\begin{bmatrix}
L_1 & \zer_{s_1\times s_2}\\
F & \bar{L}_2
\end{bmatrix},
\quad \st\quad \bar{L}_2\bar{L}_2^\top=L_2L_2^\top+h_2h_2^\top.
\end{equation}
Moreover, given $L_2$ and $h_2$, computing $\bar{L}_2$ requires $\cO(s_2^2)$ flops.
\end{lemma}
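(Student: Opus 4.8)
The plan is to exploit uniqueness of the Cholesky factorization together with the $3\times 3$ block structure induced by the row ordering $B_1,\,b^\top,\,B_2$ of $B$. First I would partition the Gram matrix $M\triangleq B G^{-1} B^\top$ conformally into blocks of sizes $s_1,1,s_2$. Its first block row is $\begin{pmatrix} B_1 G^{-1} B_1^\top & B_1 G^{-1} b & B_1 G^{-1} B_2^\top\end{pmatrix}$, with the scalar $b^\top G^{-1} b$ in the middle and $B_2 G^{-1} B_2^\top$ in the trailing $(3,3)$ position; by hypothesis $M=LL^\top$ with $L$ in the stated lower-triangular block form.

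Next I would expand $LL^\top$ block by block and match it against $M$. The identities I actually need come from the $(1,1)$, $(1,3)$ and $(3,3)$ blocks, namely
\begin{equation*}
B_1 G^{-1} B_1^\top = L_1 L_1^\top,\quad B_1 G^{-1} B_2^\top = L_1 F^\top,\quad B_2 G^{-1} B_2^\top = F F^\top + h_2 h_2^\top + L_2 L_2^\top.
\end{equation*}
The last is the crux: the trailing Gram block splits into a part $FF^\top$ coming from the retained columns of $L$ and a rank-one remainder $h_2 h_2^\top$ contributed precisely by the column of $L$ associated with the deleted row $b$.

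Then I would form $\bar{M}\triangleq \bar{B} G^{-1} \bar{B}^\top$, which is exactly $M$ with its middle block row and column struck out, and verify the candidate factor directly. Expanding $\bar{L}\bar{L}^\top$ for $\bar{L}=\left[\begin{smallmatrix} L_1 & \zer \\ F & \bar{L}_2 \end{smallmatrix}\right]$ produces diagonal block $L_1 L_1^\top$, off-diagonal block $L_1 F^\top$, and trailing block $F F^\top + \bar{L}_2 \bar{L}_2^\top$. The first two match $\bar M$ immediately by the identities above, and the trailing block matches if and only if $\bar{L}_2 \bar{L}_2^\top = L_2 L_2^\top + h_2 h_2^\top$, which is the asserted condition. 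Since $\Rank(\bar{B})=s_1+s_2$, the matrix $\bar M$ is positive definite, so $\bar L_2$ exists and is lower triangular with positive diagonal; hence $\bar L$ is the (unique) Cholesky factor, and the retained block $\left[\begin{smallmatrix} L_1 \\ F \end{smallmatrix}\right]$ of $L$ carries over verbatim.

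Finally, for the flop count I would observe that $\bar{L}_2$ arises from $L_2$ by a \emph{positive rank-one update} $L_2 L_2^\top + h_2 h_2^\top$, and that such an update of an $s_2\times s_2$ Cholesky factor can be carried out in $\cO(s_2^2)$ flops by the standard Givens/hyperbolic-rotation sweep. The proof is mostly routine block algebra; the one step that requires care, and the part I would regard as the crux, is isolating the rank-one structure of the trailing block so that only the lower-right factor is recomputed rather than the whole factorization.
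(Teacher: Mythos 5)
Your proposal is correct and follows essentially the same route as the paper: the paper's proof simply asserts that the block identity \eqref{eq:chol-update-minus} "is easy to verify" and cites the standard rank-one Cholesky update literature (Gill et al., also MATLAB's \texttt{cholupdate}) for the $\cO(s_2^2)$ claim, while you carry out exactly that block-by-block verification explicitly and invoke the same Givens-rotation update for the complexity bound. Your added details --- matching the $(1,1)$, $(1,3)$, $(3,3)$ blocks of $LL^\top$, isolating the rank-one term $h_2h_2^\top$, and using $\Rank(\bar{B})=s_1+s_2$ plus uniqueness of the Cholesky factor --- are all sound and merely make the paper's "easy to verify" step rigorous.
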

\begin{proof}
It is easy to verify that $\bar{L}$ given in \eqref{eq:chol-update-minus} is the lower-triangular Cholesky factor of $\bar{B}G^{-1}\bar{B}^\top$. For details of computing $\bar{L}_2$, refer to~\cite{gill1974methods}. Moreover, MATLAB routine \texttt{cholupdate}$(L_2^\top,h_2)$ can be called to compute $\bar{L}_2^\top$.\qed
\end{proof}
%
\begin{figure}[h!]
\begin{framed}
{\small
\textbf{\underline{Algorithm Multi-Block ADMM}}\\
Iteration 0: $\Delta_{ij}\gets\bar{x}_i-\bar{x}_j$ for $(i,j)\in\cN\times\cN$ and $\bar{\Delta}_j \gets ( \sum_{i\in\cN} \Delta_{ij}\Delta_{ij}^\T)^{-1}$ for $j\in\cN$\\
Iteration $k$: ($k \geq 0$)
	\begin{enumerate}[label=\arabic*:]
        	\item $\xi_j^{k+1} \gets \bar{\Delta}_j \big( \sum_{i\in\cN} \Delta_{ij} (\theta_{ij}^k/\rho + \nu_{ij}^k + y_i^k - y_j^k ) \big)$ for $j\in\cN$
            \item $\tilde{\nu}^{k+1}_{ij}\gets\nu_{ij}^k - \Delta_{ij}^\T \xi_j^{k+1}$ for $(i,j)\in\cN\times\cN$
        	\item $\pmb{w}^{k+1} \gets \bar{\pmb{y}} + D^\T \pt^{k} + \rho D^\T \tilde{\pmb{\nu}}^{k+1}$
            \item $y_i^{k+1} \gets \frac{1}{1+2N\rho} \left( w_i + 2\rho \sum_{j\in\cN} w_j \right)$ for $i\in\cN$
            \item $\nu_{ij}^{k+1} \gets \min \left\{ y_j^{k+1} + \Delta_{ij}^\T \xi_j^{k+1} - y_i^{k+1} -  \theta_{ij}^k/\rho, 0 \right\}$ for $(i,j)\in\cN\times\cN$
            \item $\theta_{ij}^{k+1} \gets \theta_{ij}^{k} + \rho \left( \nu_{ij}^{k+1} + y_i^{k+1}- y_j^{k+1}-\Delta_{ij}^\T \xi_j^{k+1} \right)$ for $(i,j)\in\cN\times\cN$
    \end{enumerate}
	\vspace{-0.25 cm}
}%
\end{framed}
\vspace{-0.25 cm}
\caption{Multi-block ADMM~(ADMM)}
\label{fig:admm}
\vspace*{-5mm}
\end{figure}
\subsection{Multi-block ADMM}
Recently, Mazumder et al.~\cite{mazumder2015computational} proposed a multi-block ADMM to solve problem~\eqref{original_compact}. Although, the authors report that it works well in practice, to our best knowledge, the convergence property of the method is still unknown. In fact, it is recently shown that ADMM does not necessarily converge when the number of primal variable blocks are three or more~\cite{chen2016direct}; and the ADMM algorithm in \cite{mazumder2015computational}, displayed in Fig.~\ref{fig:admm}, alternatingly updates \emph{three}-blocks of primal variables: $\pmb{\xi}=[\xi_i]_{i\in\cN}$, $\py=[y_i]_{i\in\cN}$ and $\pmb{\nu}=[\nu_{ij}]_{(i,j)\in\cN\times\cN}$.

The matrix $D \in \reals^{N^2 \times N}$ is similar to our matrix $A_1$ defined in Definition~\ref{def:A1A2}, except $D$ also contains rows corresponding to $(i,i)\in\cN\times\cN$, i.e., $D\py=\pmb{z}\in\reals^{N^2}$ such that $z_{ij}=y_j-y_i$ for $(i,j)\in\cN\times\cN$, and the long-vector $\pmb{z}$ obtained by sorting its elements according to increasing lexicographic order on the index set $\cN\times\cN$. Similarly, the elements of the auxiliary variable $\tilde{\pmb{\nu}}\in\reals^{N^2}$ is also sorted according to increasing lexicographic order on the index set $\cN\times\cN$. During initialization, the ADMM algorithm requires computing $\bar{\Delta}_j = ( \sum_{i\in\cN} \Delta_{ij}\Delta_{ij}^\T)^{-1}$ for all $i\in\cN$, where $\Delta_{ij} = \bar{x}_i - \bar{x}_j$. Although it is required only one time, this computation costs $\cO(N^2n^2 + Nn^3)$ flops. Based on our numerical tests, as $N$ increases, this preprocessing time becomes substantial compared to overall runtime. At each iteration, the algorithm needs to update five different variables: $\pxi^k$, $\pmb{w}^k$, $\py^k$, $\pmb{\nu}^k$ and $\pt^k$. The cost for updating subgradient vector $\pxi^k$ is $\cO(N^2n+Nn^2)$ flops, updating $\pmb{w}^k$ takes $\cO(N^2)$ flops, and given $\pmb{w}^{k-1}$ updating the function value-vector $\py^k$ takes $\cO(N)$ flops, and updating residuals $\pmb{\nu}^k$ and dual variables $\pt^k$ both take $\cO(N^2)$ flops separately. Thus, the overall per iteration complexity is $\cO(N^2n+Nn^2)$ with $\cO(N^2n^2 + Nn^3)$ one-time cost at the beginning.  Note that ADMM needs to store not only matrix $D \in \reals^{ N^2 \times N }$ and vectors $\Delta_{ij} \in \reals^{n}$ for all $(i,j)\in\cN$, that are comparable to our $A_1$ and $A_2$, but also $\bar{\Delta}_i \in \reals^{n \times n}$ for all $\in\cN$, which are the matrices inverted during pre-processing; hence, the number of non-zeros stored in the RAM for ADMM is roughly $(N^2 - N)(n+2)+Nn^2$, which is $\cO(K^2\bar{N}^2n)$. When compared to Table~\ref{memory}, clearly P-APG leads to significant memory savings.

\vspace*{-4mm}
\section{Numerical Study}
\label{numerical} \vspace*{-3mm}

Here we demonstrate the scalability of P-APG, and compare its performance against other competitive methods: an interior point method, an active set method~(ASM), and a multi-block ADMM. To solve the convex regression problem, we implemented P-APG, ASM and ADMM in MATLAB, and used the stand-alone version MOSEK~\cite{mosek} as an interior point solver for benchmarking purposes. Moreover, for P-APG, we also use MOSEK together with the Parallel Computing Toolbox, in order to solve K QP-subproblems in \emph{parallel} using $K$ cores in each iteration of P-APG in Fig.~\ref{fig:papg}. 
MOSEK is a commercial off-the-shelf software which has a state-of-the-art interior-point optimizer for quadratic problems. 
Note that MOSEK also comes with CVX, which is a popular MATLAB-based modeling system for convex optimization; but this version of MOSEK is not compatible with Parallel Computing Toolbox in MATLAB, i.e., even though one calls MOSEK through CVX formulations within a \texttt{parfor} loop, the $K$ subproblems are still solved in a sequential manner. In order to take advantage of the computing power in a cluster of computers for long-running jobs, one has to adopt \emph{batch} processing in MATLAB to be able to better exploit the processor cores in multiple machines. On the other hand, matrix operations in MATLAB leverage multi-core and multi-threading framework by default. Hence, ASM and ADMM are coded without using the parallel toolbox, as they only contain matrix operations in every iteration and these operations are executed in parallel automatically. To eliminate factors that might have an influence on the runtime to the best extent, we carried out all numerical tests comparing P-APG against 
other methods on high performance computing cluster by executing a single script, so that they all run on exactly the same processor cores and memory modules. Numerical tests 
are carried out on a single node at a research computing cluster. The node is composed of one 24-core processor, each having 1GB RAM (24GB RAM in total). We determine the number of core processors and the amount of RAM allocated depending on the size of the problem solved -- 
see Sections~\ref{sec:self-test} and~\ref{sec:tests}.\vspace*{-5mm} 
\paragraph{Experimental setup:}
Our problem setup adopted in the following sections involve two different test functions: 1) $f_0(\pmb{x})=\frac{1}{2}\pmb{x}^{\mathsf{T}} Q \pmb{x}$ and 2) $f_0(\pmb{x}) = \exp( \pmb{p}^{\mathsf{T}} \pmb{x} )$, where  $Q\in\reals^{n\times n}$ is a symmetric matrix, $\pmb{p} \in \mathbb{R}^n$, and they are randomly generated as follows. We first set $\bar{Q}\triangleq\Lambda^{\mathsf{T}} \Lambda$ such that $\Lambda\in\mathbb{R}^{n\times n}$ is generated randomly with all components being i.i.d. with $\cN(0,1)$, where $\cN(\mu,\sigma^2)$ denotes Normal distribution with mean $\mu$ and variance $\sigma^2$; next, without changing left and right singular vectors of $\bar{Q}$, we transform its singular values such that the resulting condition number is 15 and we call the resulting matrix as $Q$; and $\pmb{p} \in \mathbb{R}^n$ is generated using uniform distribution on the hypercube $[0,0.2]^n$. The noisy observations $\{\bar{y}_\ell\}_{\ell\in\cN}$ are generated according to \eqref{data}, where the locations $\{ \bar{x}_\ell \}_{\ell=1}^N\subset\mathbb{R}^n$ and additive noise $\{ \epsilon_\ell \} _{\ell=1}^N\subset\mathbb{R}$ are generated randomly with all components being i.i.d. with $\cN(0,4)$ and $\cN(0,100)$ respectively. In addition, we moved 30\% of randomly chosen location/observation pairs into the interior of the epigraph of the test function $f_0$ by replacing $(\bar{x}_\ell,\bar{y}_\ell)$ with $(\bar{x}_\ell, 1.3\bar{y}_\ell)$. In all the experiments involving P-APG and ASM, we set $\gamma=10^{-4}$ in \eqref{regularize}.
\vspace*{-3mm}
\subsection{Convergence behavior of P-APG on the regularized problem}
\label{sec:self-test}
We compare i) running MOSEK alone and ii) running it within P-APG on the regularized problem~\eqref{regularize} with increasing dimension. The numerical study is mainly aimed to demonstrate how the performance of each method scales for solving the \emph{regularized} problem as its dimension increases.
First, we start with a small size problem: $n=10$, $N=100$, and use the test function $f_0(\pmb{x})=\frac{1}{2}\pmb{x}^{\mathsf{T}} Q \pmb{x}$.
We 
compare the quality of the solutions computed by P-APG and dual gradient ascent (as the dual function $g_\gamma$ in \eqref{eq:g_gamma} is differentiable). In order to compute dual gradient, $\grad g_\gamma$, one needs to solve $K$ quadratic subproblems. To exploit this parallel structure, we partition the data into two sets, i.e., $K=2$. Within both dual gradient ascent and P-APG, we called MOSEK to compute the dual gradients via solving $K$ QP-subproblems. Since we allow violations for the relaxed constraints, we define the ``duality gap" at the $k$-th iteration as $\pmb{\theta}_k^{\mathsf{T}} C \pmb{\eta}_k$ -- recall that $\pmb{\eta}_k=[\pmb{y}_k^\top \pmb{\xi}_k^\top]^\top$. Fig.~\ref{fig:gap}(left) represents how the duality gap for both methods change at each iteration. In order to better understand the behavior of P-APG, we report in Fig.~\ref{fig:gap}(right) the duality gap of P-APG in a larger scale. Fig.~\ref{Distance_All} reports the infeasibility of iterates, i.e., $\big\| \big( A_1 \, \pmb{y}_k +A_2 \, \pmb{\xi}_k \big)_{-} \big\|_2$. \vspace*{-4mm}

\begin{figure}[h!]
\centering
\includegraphics[width=0.48\textwidth]{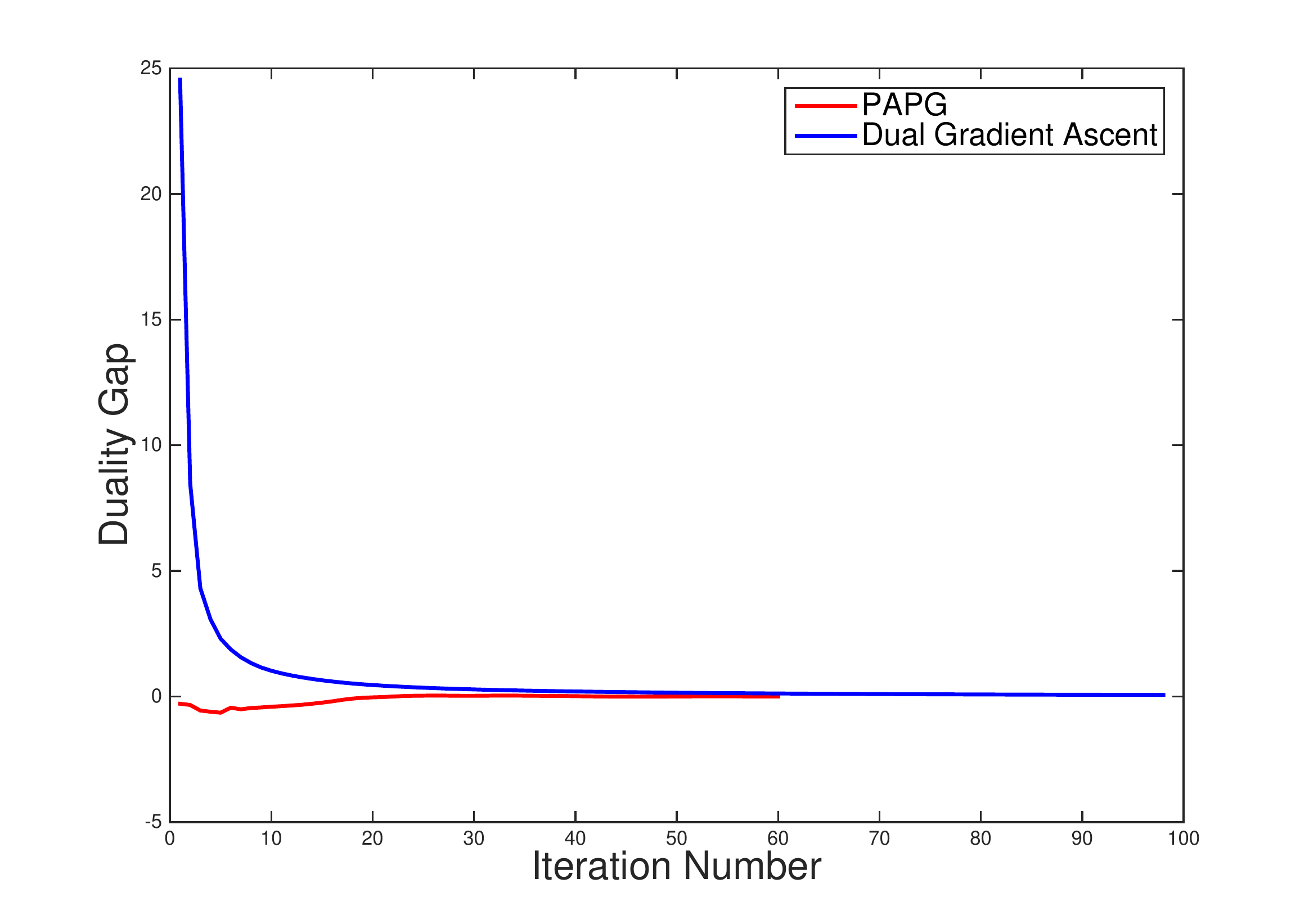}
\includegraphics[width=0.48\textwidth]{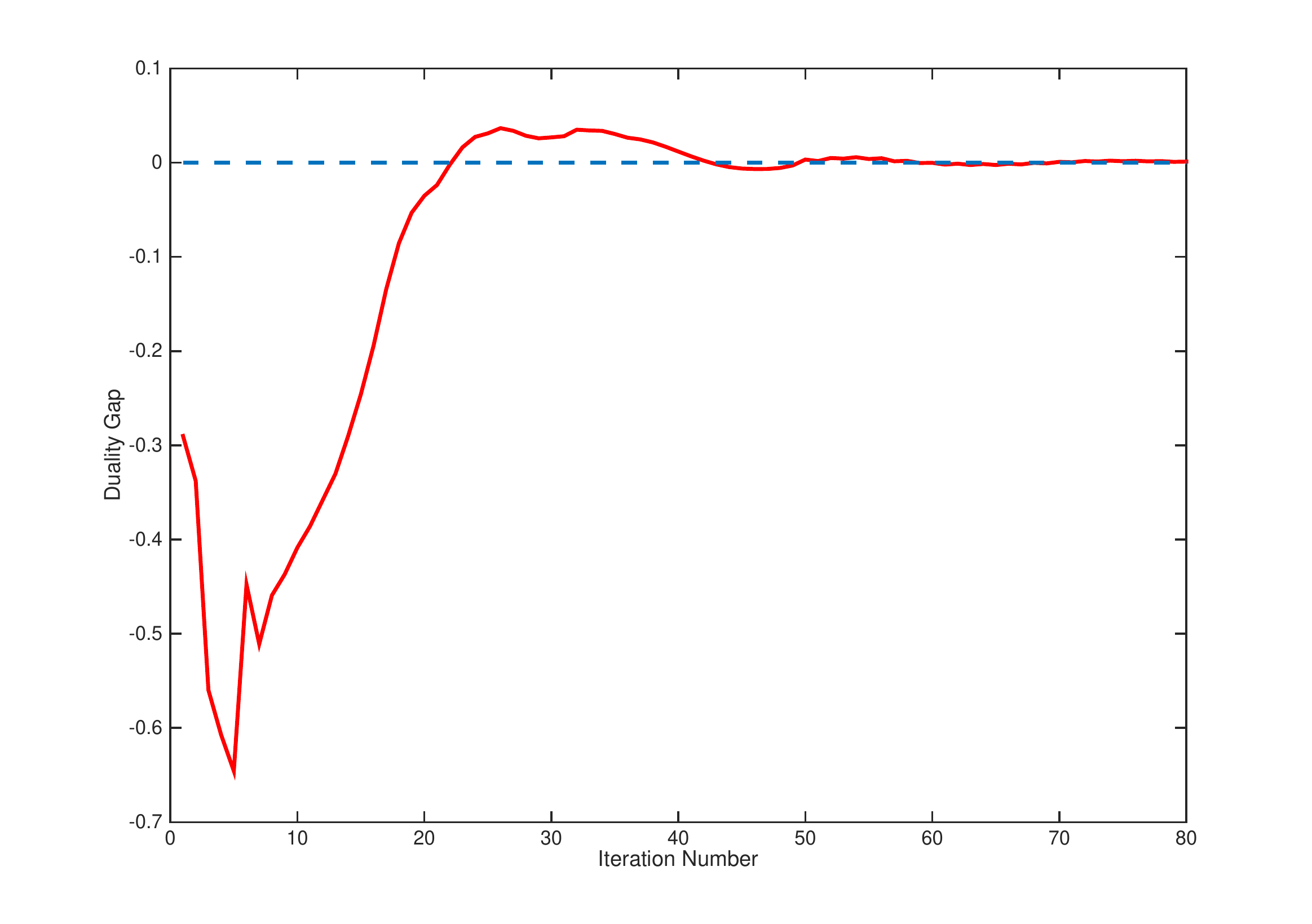}
\caption{ Duality Gap for P-APG and Dual Gradient Ascent: (left) P-APG and Dual Gradient Ascent, (right) Zoom-in for P-APG Method}
\label{fig:gap}
\vspace*{-2mm}
\end{figure}

\begin{figure}[h!]
\centering
\includegraphics[width=0.5\textwidth]{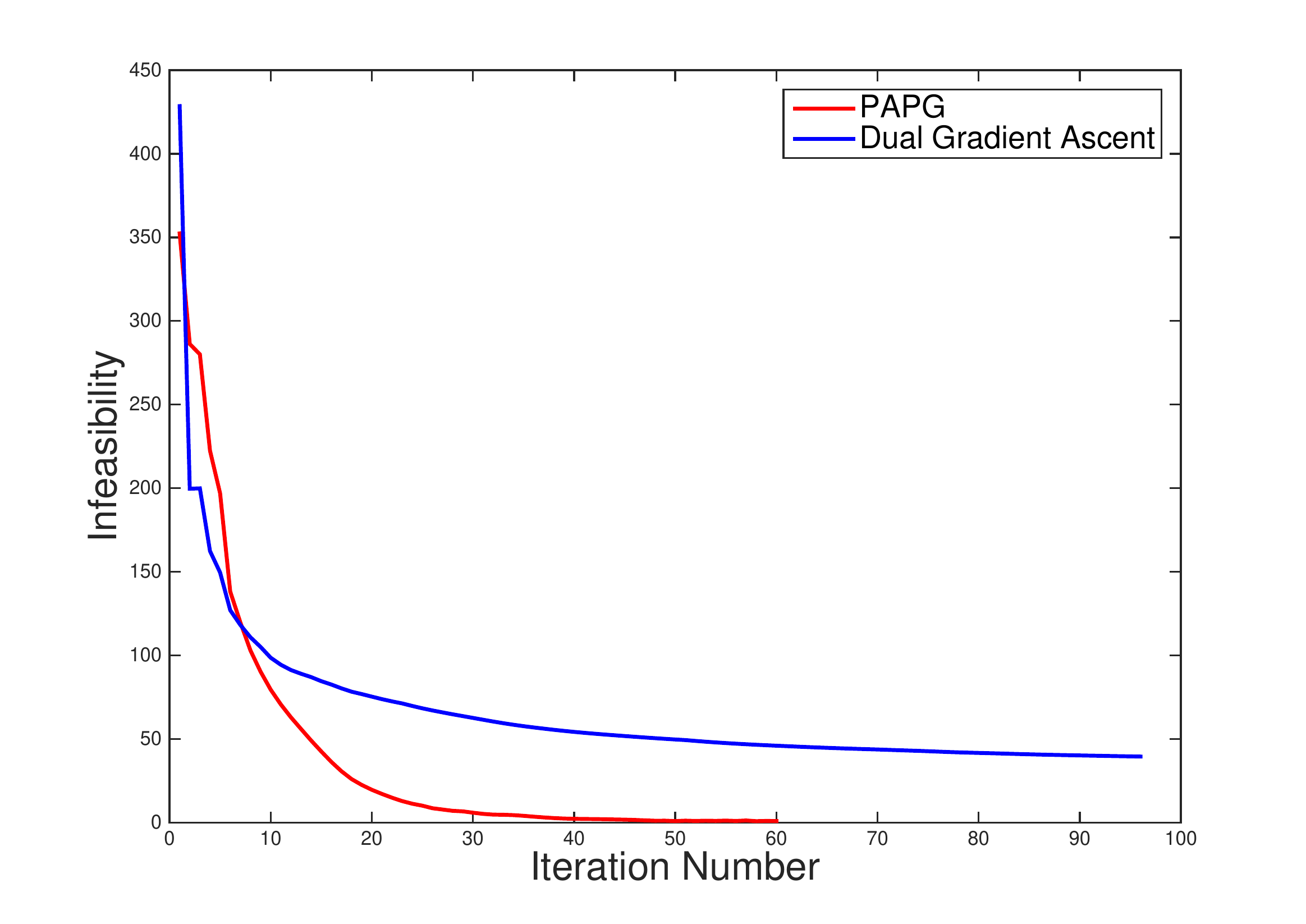}
\caption{ Distance to Feasible Region for P-APG and Dual Gradient Ascent. 
}
\label{Distance_All}
\vspace*{-5mm}
\end{figure}

\begin{table}[b]
\caption{ Comparison with test function $\frac{1}{2}\pmb{x}^{\mathsf{T}}Q\pmb{x}$ for $n=20$}
\label{tab:xqx_20}
\centering
{\tiny
\begin{tabular}{llllllll}
\toprule
n, N & Algorithm  & Cores/RAM & Preprocess & Wall-time & Infeasibility & SubOpt\_Reg & DualGap \\
\midrule
\multirow{3}{*}{20, 200} & Mosek\_Reg & 2/2 & 0 & 2 & 0 & 0 & 0 \\
 & PAPG\_A & 2/2 & 0.2 & 30 & 9.51E-02 & 3.86E-04 & 6.80E-08 \\
 & PAPG\_C & 2/2 & 0.2 & 16 & 9.75E-02 & 1.22E-03 & 2.63E-07 \\
\midrule
\multirow{3}{*}{20, 400} & Mosek\_Reg & 4/4 & 0 & 9 & 0 & 0 & 0 \\
 & PAPG\_A & 4/4 & 1.1 & 47 & 9.54E-02 & 1.92E-03 & 2.56E-07 \\
 & PAPG\_C & 4/4 & 1.0 & 44 & 9.94E-02 & 7.45E-05 & 3.23E-08 \\
\midrule
\multirow{3}{*}{20, 800} & Mosek\_Reg & 8/8 & 0 & 42 & 0 & 0 & 0 \\
 & PAPG\_A & 8/8 & 4.3 & 92 & 9.92E-02 & 6.14E-03 & 4.79E-07 \\
 & PAPG\_C & 8/8 & 4.2 & 101 & 9.87E-02 & 2.06E-03 & 1.75E-07 \\
\midrule
\multirow{3}{*}{20, 1600} & Mosek\_Reg & 16/16 & 0 & 311 & 0 & 0 & 0 \\
 & PAPG\_A & 16/16 & 20.3 & 259 & 9.50E-02 & 1.10E-02 & 4.80E-07 \\
 & PAPG\_C & 16/16 & 20.0 & 408 & 9.99E-02 & 5.89E-03 & 2.63E-07 \\
\midrule
\multirow{3}{*}{20, 2400} & Mosek\_Reg & 24/24 & 0 & 987 & 0 & 0 & 0 \\
 & PAPG\_A & 24/24 & 56.1 & 323 & 9.77E-02 & 1.43E-02 & 4.61E-07 \\
 & PAPG\_C & 24/24 & 59.7 & 723 & 1.00E-01 & 7.42E-03 & 2.43E-07 \\
\bottomrule
\end{tabular}
}
\vspace*{-3mm}
\end{table}%
\begin{table}[h]
\caption{Comparison with test function $\frac{1}{2}\pmb{x}^{\mathsf{T}}Q\pmb{x}$ for $n=80$}
\label{tab:xqx_80}
\centering
{\tiny
\begin{tabular}{llllllll}
\toprule
n, N & Algorithm  & Cores/RAM & Preprocess & Wall-time & Infeasibility & SubOpt\_Reg & DualGap \\
\midrule
\multirow{3}{*}{80, 200} & Mosek\_Reg & 2/2 & 0 & 4 & 0 & 0 & 0 \\
 & PAPG\_A & 2/2 & 0.5 & 89 & 9.46E-02 & 6.08E-04 & 1.88E-07 \\
 & PAPG\_C & 2/2 & 0.5 & 54 & 9.93E-02 & 1.05E-03 & 3.28E-07 \\
\midrule
\multirow{3}{*}{80, 400} & Mosek\_Reg & 4/4 & 0 & 18 & 0 & 0 & 0 \\
 & PAPG\_A & 4/4 & 2.5 & 324 & 8.92E-02 & 5.79E-04 & 9.54E-08 \\
 & PAPG\_C & 4/4 & 2.6 & 287 & 9.87E-02 & 1.09E-03 & 1.96E-07 \\
\midrule
\multirow{3}{*}{80, 800} & Mosek\_Reg & 8/8 & 0 & 97 & 0 & 0 & 0 \\
 & PAPG\_A & 8/8 & 12.2 & 383 & 9.79E-02 & 1.43E-03 & 1.29E-07 \\
 & PAPG\_C & 8/8 & 12.5 & 379 & 9.92E-02 & 1.02E-03 & 9.12E-08 \\
\midrule
\multirow{3}{*}{80, 1600} & Mosek\_Reg & 16/16 & 0 & 661 & 0 & 0 & 0 \\
 & PAPG\_A & 16/16 & 55.9 & 597 & 9.59E-02 & 1.61E-03 & 7.67E-08 \\
 & PAPG\_C & 16/16 & 57.5 & 1145 & 9.94E-02 & 1.61E-03 & 7.70E-08 \\
\midrule
\multirow{3}{*}{80, 2400} & Mosek\_Reg & 24/24 & 0 & 1966 & 0 & 0 & 0 \\
 & PAPG\_A & 24/24 & 128.5 & 897 & 9.91E-02 & 1.17E-03 &  3.87E-08 \\
 & PAPG\_C & 24/24 & 133.8 & 1947 & 1.00E-01 & 2.57E-03 & 9.19E-08 \\
\bottomrule
\end{tabular}
}
\vspace*{-3mm}
\end{table}%
\begin{table}[t]
\caption{ Comparison with test function $\exp( \pmb{p}^{\mathsf{T}}\pmb{x})$ for $n=20$}
\label{tab:exp_20}
\centering
{\tiny
\begin{tabular}{llllllll}
\toprule
n, N & Algorithm  & Cores/RAM & Preprocess & Wall-time & Infeasibility & SubOpt\_Reg & DualGap \\
\midrule
\multirow{3}{*}{20, 200} & Mosek\_Reg & 2/2 & 0 & 3 & 0 & 0 & 0 \\
 & PAPG\_A & 2/2 & 0.2 & 35 & 8.97E-02 & 1.01E-03 & 3.58E-07 \\
 & PAPG\_C & 2/2 & 0.2 & 17 & 9.62E-02 & 1.16E-03 & 4.09E-07 \\
\midrule
\multirow{3}{*}{20, 400} & Mosek\_Reg & 4/4 & 0 & 10 & 0 & NaN & 0 \\
 & PAPG\_A & 4/4 & 1.0 & 67 & 9.41E-02 & 1.36E-03 & 2.15E-07 \\
 & PAPG\_C & 4/4 & 1.0 & 71 & 9.93E-02 & 6.50E-04 & 7.11E-08 \\
\midrule
\multirow{3}{*}{20, 800} & Mosek\_Reg & 8/8 & 0 & 66 & 0 & 0 & 0 \\
 & PAPG\_A & 8/8 & 4.2 & 194 & 7.04E-02 & 5.56E-03 & 4.62E-07 \\
 & PAPG\_C & 8/8 & 4.1 & 266 & 9.91E-02 & 5.39E-03 & 4.45E-07 \\
\midrule
\multirow{3}{*}{20, 1600} & Mosek\_Reg & 16/16 & 0 & 558 & 0 & 0 & 0 \\
 & PAPG\_A & 16/16 & 18.7 & 553 & 9.75E-02 & 6.14E-02 & 3.66E-07 \\
 & PAPG\_C & 16/16 & 18.8 & 886 & 9.72E-02 & 1.65E-03 & 4.51E-07 \\
\midrule
\multirow{3}{*}{20, 2400} & Mosek\_Reg & 24/24 & 0 & 2155 & 0 & 0 & 0 \\
 & PAPG\_A & 24/24 & 128.5 & 797 & 9.97E-02 & 5.79E-02 & 1.29E-07 \\
 & PAPG\_C & 24/24 & 133.8 & 1347 & 1.00E-01 & 2.92E-03 & 3.59E-08 \\
\bottomrule
\end{tabular}
}
\vspace*{-3mm}
\end{table}%
\begin{table}[h!]
\caption{ Comparison with test function $\exp( \pmb{p}^{\mathsf{T}}\pmb{x})$ for $n=80$}
\label{tab:exp_80}
\centering
{\tiny
\begin{tabular}{llllllll}
\toprule
n, N & Algorithm  & Cores/RAM & Preprocess & Walltime & Infeasibility & SubOpt\_Reg & DualGap \\
\midrule
\multirow{3}{*}{80, 200} & Mosek\_Reg & 2/2 & 0 & 4 & 0 & 0 & 0 \\
 & PAPG\_A & 2/2 & 0.4 & 53 & 9.21E-02 & 6.68E-03 & 4.25E-07 \\
 & PAPG\_C & 2/2 & 0.4 & 34 & 9.27E-02 & 1.60E-03 & 1.01E-07 \\
\midrule
\multirow{3}{*}{80, 400} & Mosek\_Reg & 4/4 & 0 & 21 & 0 & 0 & 0 \\
 & PAPG\_A & 4/4 & 2.6 & 209 & 9.39E-02 & 3.00E-03 & 1.15E-07 \\
 & PAPG\_C & 4/4 & 2.6 & 183 & 9.63E-02 & 2.92E-03 & 1.12E-07 \\
\midrule
\multirow{3}{*}{80, 800} & Mosek\_Reg & 8/8 & 0 & 143 & 0 & 0 & 0 \\
 & PAPG\_A & 8/8 & 12.8 & 299 & 9.12E-02 & 9.44E-04 & 1.90E-08 \\
 & PAPG\_C & 8/8 & 13.0 & 272 & 9.87E-02 & 1.65E-04 & 5.01E-10 \\
\midrule
\multirow{3}{*}{80, 1600} & Mosek\_Reg & 16/16 & 0 & 687 & 0 & 0 & 0 \\
 & PAPG\_A & 16/16 & 55.3 & 465 & 9.72E-02 & 3.07E-03 & 3.59E-08 \\
 & PAPG\_C & 16/16 & 54.7 & 753 & 9.95E-02 & 1.28E-04 & 1.53E-09 \\
\midrule
\multirow{3}{*}{80, 2400} & Mosek\_Reg & 24/24 & 0 & 2789 & 0 & 0 & 0 \\
 & PAPG\_A & 24/24 & 138.2 & 774 & 9.69E-02 & 7.82E-03 & 6.39E-08 \\
 & PAPG\_C & 24/24 & 139.3 & 1379 & 9.94E-02 & 2.23E-03 & 2.01E-08 \\
\bottomrule
\end{tabular}
}
\end{table}

A primal-dual iterate $( \pmb{\eta} ,\pmb{\theta})$ is optimal if the duality gap and infeasibility are both zero. As the feasibility happens in the limit, the duality gap in Fig.~\ref{fig:gap}(right) can go below 0, which can be explained by the infeasibility of iterates. Therefore, observing a decrease in duality gap only tells one part of the story; without convergence to feasibility, it is not valuable alone as a measure. As shown in the Fig.~\ref{fig:gap}, the duality gap converges quickly to zero for both methods. On the other hand, as shown in Fig.~\ref{Distance_All}, constraint violation for P-APG iterates decreases to 0 much faster than it does for the dual gradient ascent iterates. Hence, P-APG iterate sequence converges to the unique optimal solution considerably faster.

As shown in Tables~\ref{tab:xqx_20},~\ref{tab:xqx_80},~\ref{tab:exp_20} and~\ref{tab:exp_80}, the dimension of variables $n\in\{20,80\}$, and the number of observations $ N\in\{200, 400, 800, 1600, 2400\}$. Since the number of constraints increases at the rate of $\mathcal{O}(N^2)$, as the size of problem increases in $N$, we reported the normalized infeasibility $\norm{\big( A_1\pmb{y} +A_2\pmb{\xi} \big)_{-}}_2/\sqrt{N^2-N}$ and normalized duality gap $|\pmb{\theta}^{\mathsf{T}} C \pmb{\eta}|/(N^2-N)$. We partition the set of observations $\cN$ into $K$ subsets. Each one of them consists of 100 points; therefore, we set $K=2, 4, 8, 16, 24$ for $N=200, 400, 800, 1600, 2400$, and we reserve $2/2, 4/4, 8/8, 16/16, 24/24$ number of \texttt{Cores/RAM}, respectively, depending on $N$ so that for each job submitted to the computing cluster, an instance of \eqref{regularize} is solved using P-APG on the node such that each subproblem in \eqref{eq:step1-problem} for $i\in\cK$ is computed on a different core. We tested both the adaptive step and constant step version of P-APG, which we abbreviate as PAPG\_A and PAPG\_C, respectively. Both PAPG\_A and PAPG\_C are terminated whenever they compute a primal-dual iterate, $\pmb{\eta}=[\py^\top \pxi^\top]^\top$ and $\pt$, satisfying the stopping criteria: $\norm{ (A_1 \py + A_2 \pxi)_{-}}/\sqrt{N^2-N)} \leq$ 1e-1 and $ | \pmb{\theta}^{\mathsf{T}} C \pmb{\eta} | / (N^2-N)  \leq $ 5e-7, or at the end of 2 hours, which are reported as \texttt{Infeasibility} and \texttt{DualGap} respectively in the tables. Moreover, we also report relative suboptimality, i.e., \texttt{SubOpt\_Reg}$=| p - p_\gamma^* | / p_\gamma^*$, where $p_\gamma^*$ denotes the optimal value to~\eqref{regularize} and $p$ denotes the objective value of~\eqref{regularize} at termination. \texttt{Preprocess} for P-APG method is the wall-clock time elapsed during the computation of the maximum singular value for the matrix $A_4$. Additionally, in all the tables, \texttt{N/A} means that the wall clock time exceeded 2 hours for the job, 
and \texttt{Wall-time} stands for wall-clock time in \emph{seconds} for the whole job including \texttt{Preprocess}.

The numerical results reported in Tables~\ref{tab:xqx_20},~\ref{tab:xqx_80},~\ref{tab:exp_20} and~\ref{tab:exp_80} show that P-APG solution is very close to the optimal solution of the regularized problem in~\eqref{regularize}. Note that MOSEK using interior point optimizer starts working slowly beyond $N=1600$ due to $\cO(N^2n)$ memory requirement -- see Table~\ref{memory}. Numerical results show that advantages of P-APG over running IPM alone on \eqref{regularize} become more and more evident as the number of observations, $N$, increases.\vspace*{-5mm}
\subsection{Comparison with ASM and ADMM}
\label{sec:tests}
In this section, we compare P-APG with ASM and multi-block ADMM. It is worth noting that multi-block ADMM solves the original problem~\eqref{original}, while P-APG and the active set method~(ASM) solve the regularized problem~\eqref{regularize}. All algorithms are terminated whenever they compute an iterate, $(\py, \pxi)$, satisfying the following stopping criteria:
$\norm{\py - \py^*}/ \sqrt{N} \leq$ 5e-3 and $\norm{ (A_1 \py + A_2 \pxi)_{-}}/\sqrt{N^2-N} \leq$ 1e-1, where the first one is the relative sub-optimality with respect to the original problem in~\eqref{original} and the second one is the normalized infeasibility. The initial point for ASM is set by using~\eqref{eq:slater-point-AS}, where $\alpha = 1/N$. This choice of $\alpha$ works consistently well based on our test.

\begin{figure}[htbp]
\centering
\includegraphics[width=0.48\textwidth]{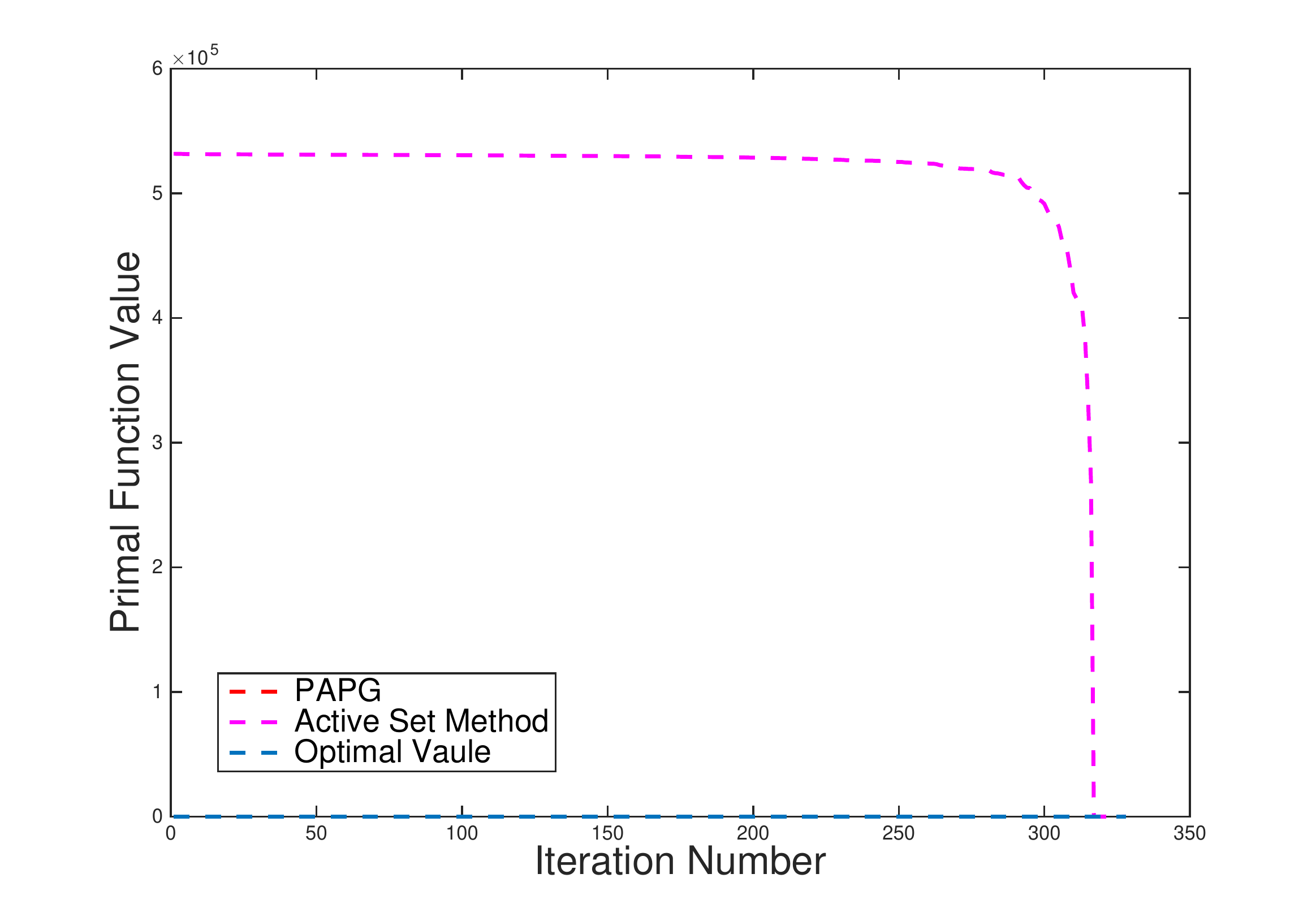}
\includegraphics[width=0.48\textwidth]{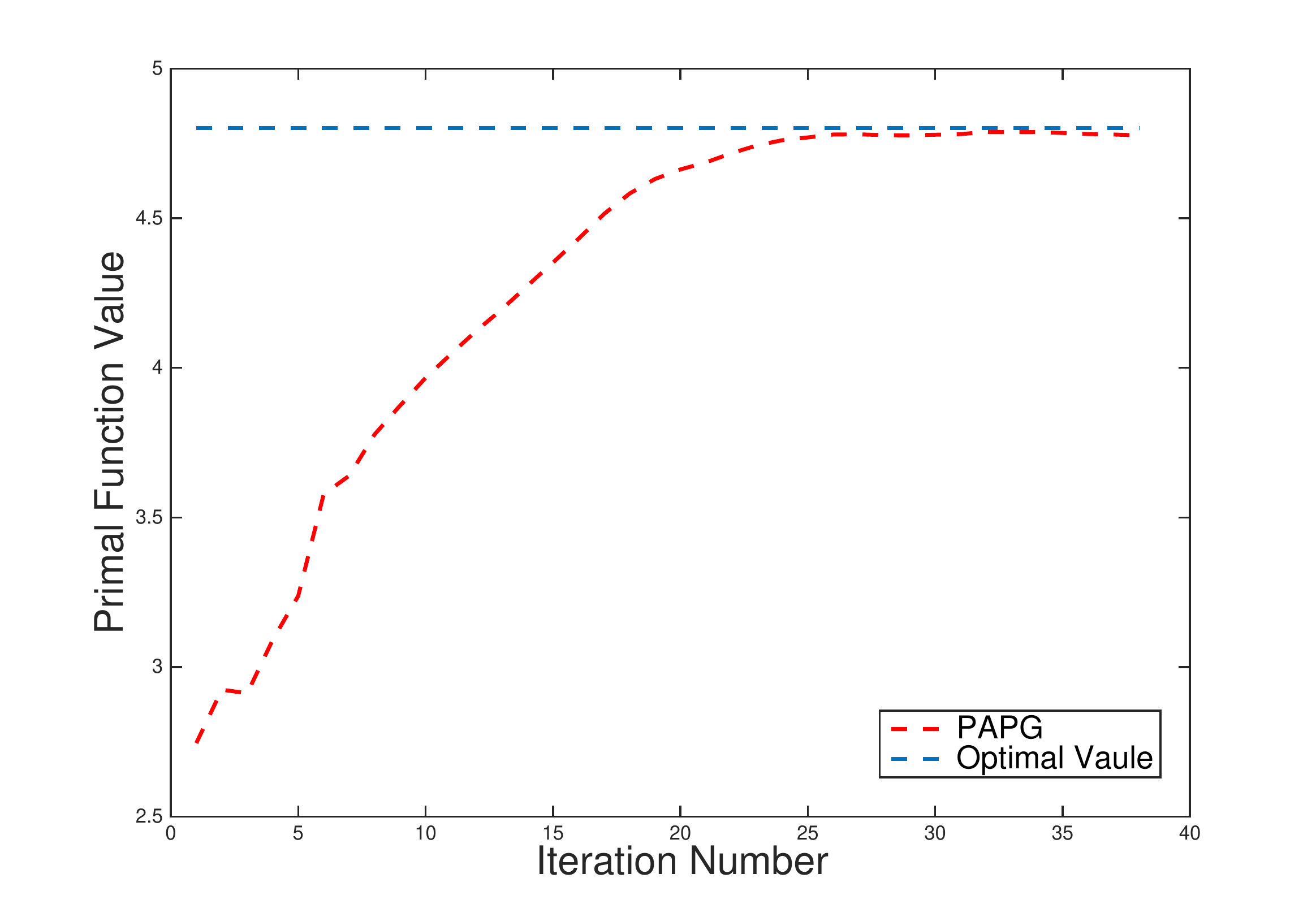}
\caption{P-APG vs ASM $(n,N)=(10,100)$: (left) Relative Suboptimality, (right) Zoom-in for P-APG method}
\label{100_1}
\vspace*{-5mm}
\end{figure}
\begin{figure}[t!]
\centering
\includegraphics[width=0.48\textwidth]{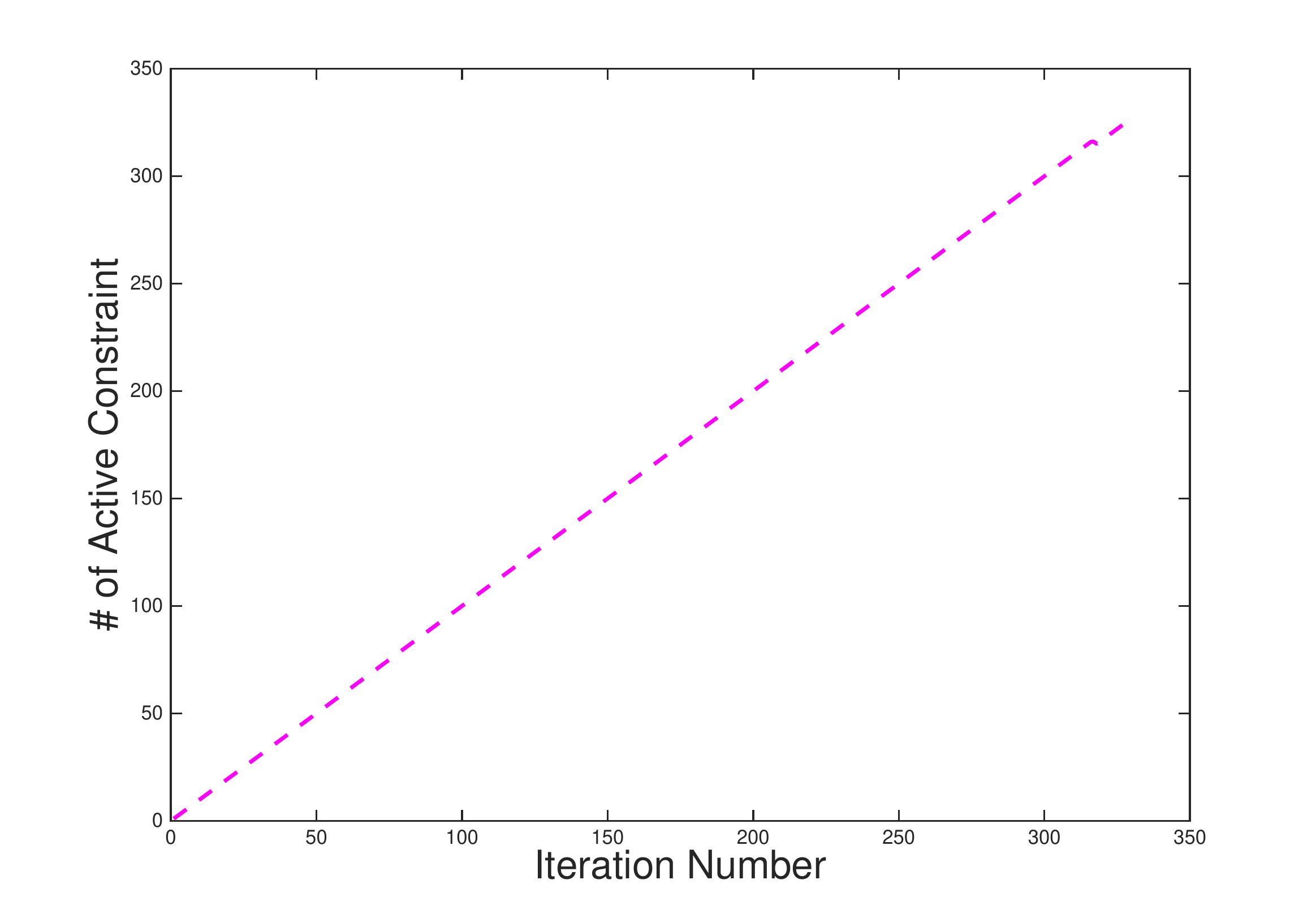}
\includegraphics[width=0.48\textwidth]{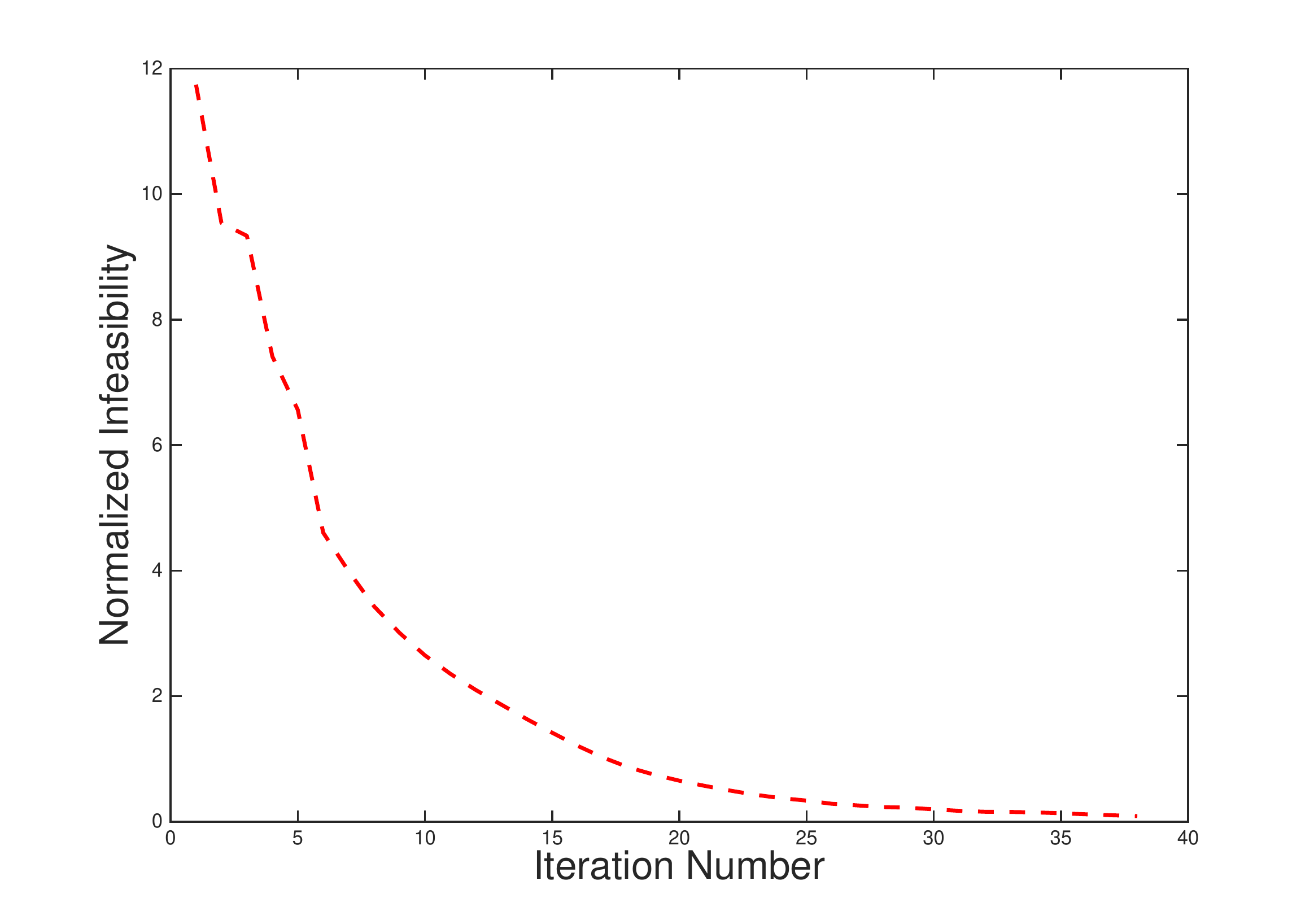}
\caption{P-APG vs ASM $(n,N)=(10,100)$: (left) Number of Active Constraints for ASM, (right) Normalized infeasibility for P-APG}
\label{fig:infeas-PAPG vs ASM}
\end{figure}

Experiments comparing convergence behaviors of P-APG and ASM on a small-size problem were carried out for $(n,N)=(10,100)$. 
As shown in Fig.~\ref{100_1}, 
active set algorithm spends quite long in a \textit{warm-up} phase before making noticeable progress in terms of function value; and this behavior becomes more and more apparent as the size of the problem increases. Fig.~\ref{fig:infeas-PAPG vs ASM}(left) displays how the number of active constraints for ASM changes. 
Fig.~\ref{fig:infeas-PAPG vs ASM}(right) shows the distance to the feasible region for P-APG method, which converges to zero very fast regardless of the dimension of the problems in all of our tests. In summary, the issues with the active set method are: (i) the majority of the time is spent for identifying the optimal active set before making a noticeable progress in terms of suboptimality; (ii) as the number of active constraint in the algorithm increases, solving the KKT system in~\eqref{eq:KKT-ASM} becomes costly -- this operation is similar to the factorization steps in interior-point methods.

In Tables~\ref{papg_asm_admm_xqx} and~\ref{papg_asm_admm_exp}, besides the statistics reported in Section~\ref{sec:self-test}, we 
also report \texttt{Accuracy} which measures the solution quality with respect to the original problem in \eqref{original}. In particular, given an approximate solution $\tilde{\py}$, obtained by solving either \eqref{original} or \eqref{regularize} depending on the algorithm chosen, \texttt{Accuracy} is computed as $\norm{\tilde{\py} - \py^*}/ \sqrt{N}$.
As in Section~\ref{sec:self-test}, \texttt{Preprocess} for P-APG method denotes the wall-clock time used for computing the maximum singular value for matrix $A_4$, and \texttt{Preprocess} for ADMM accounts for $\bar{\Delta}_j$ computation for all $j\in\cN$ as shown in Figure~\ref{fig:admm}.  The performance comparison is shown in Table~\ref{papg_asm_admm_xqx} and Table~\ref{papg_asm_admm_exp}, which clearly display that as the number of observations $N$ increases, ASM starts struggling to finish the job within 2 hours beyond N=800, and the gap between P-APG and ADMM closes rapidly, and eventually P-APG outperforms ADMM at $N = 2400$, which is also demonstrated in Fig.~\ref{fig:walltime_ratio}.
\begin{figure}[h!]
\begin{center}
\includegraphics[width=0.75\textwidth]{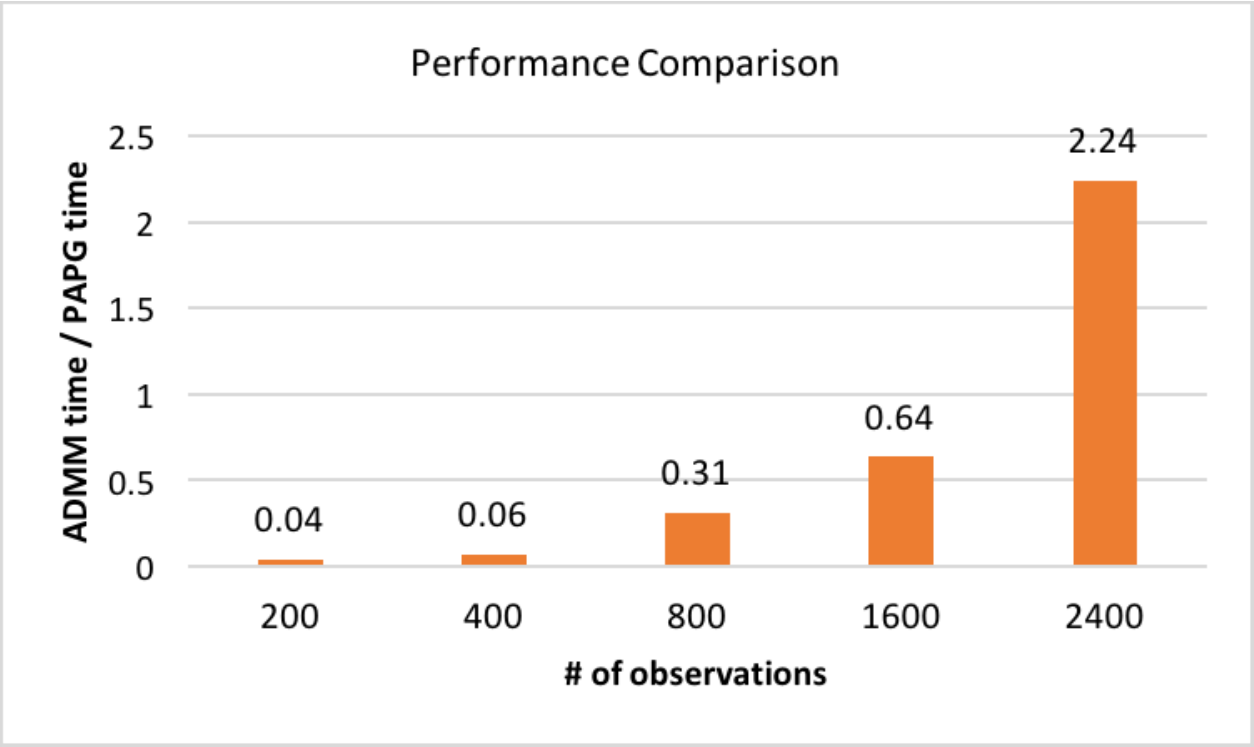}
\caption{Wall-time Ratio between ADMM and P-APG}
\label{fig:walltime_ratio}
\end{center}
\end{figure}
\section{Conclusion}
In this paper, we proposed P-APG method to efficiently compute the least squares estimator for large-scale convex regression problems. By relaxing constraints partially, we obtained the separability on the corresponding Lagrangian dual problem. Using Tikhonov regularization, we ensured the feasibility of iterates in the limit, and we provided error bounds on 1) the distance between the inexact solution to the regularized problem and the optimal solution to the original problem, 2) the constraint violation of the regularized solution. We also proposed a continuation scheme which directly solves the (unregularized) original problem (without any negative impact on the iteration complexity), and it does not require a parameter input depending on the desired solution tolerance $\epsilon$. The comparison in the numerical section demonstrates the efficiency of P-APG method on memory usage compared to IPM. Furthermore, our numerical tests show that P-APG becomes the method of choice for large $N$ values when compared to ASM and ADMM. 
\begin{table}[b!]
\caption{Comparison of PAPG and other methods $\frac{1}{2}\pmb{x}^{\mathsf{T}}Q\pmb{x}$
}
\centering
\setlength{\tabcolsep}{2pt}
{\scriptsize
\begin{tabular}{clccclllll}
\toprule
\texttt{n, N} & \texttt{Algorithms} 
& \texttt{Cores/RAM} & \texttt{Preprocess} & \texttt{Wall-time\hspace{2mm}} & \texttt{Infeasibility} & \texttt{Accuracy\hspace{2mm}} & \texttt{SubOpt\_Reg} \\
\midrule
\multirow{6}{*}{80, 200}
 & Mosek 
 & 2/2 & 0 & 5 & 0 & 0 & -- \\
 & Mosek\_Reg 
 & 2/2 & 0 & 9 & 0 & 1.30E-03 & 0 \\
 & ADMM 
 & 2/2 & 3 & 4 & 5.48E-04 & 4.32E-03 & -- \\
 & ASM 
 & 2/2 & 0 & 111 & 0 & 1.31E-03 & 9.33E-06 \\
 & PAPG\_A 
 & 2/2 & 1 & 105 & 7.29E-02 & 1.31E-03 & 6.54E-05 \\
 & PAPG\_C 
 & 2/2 & 0 & 43 & 8.43E-02 & 1.31E-03 & 3.23E-06 \\
\midrule
\multirow{6}{*}{80, 400}
 & Mosek 
 & 4/4 & 0 & 20 & 0 & 0 & -- \\
 & Mosek\_Reg 
 & 4/4 & 0 & 36 & 0 & 2.60E-03 & 0 \\
 & ADMM 
 & 4/4 & 11 & 20 & 4.46E-04 & 4.71E-03 & -- \\
 & ASM 
 & 4/4 & 0 & 665 & 0 & 2.57E-03 & 5.10E-07 \\
 & PAPG\_A 
 & 4/4 & 3 & 319 & 9.38E-02 & 2.56E-03 & 1.17E-03 \\
 & PAPG\_C 
 & 4/4 & 3 & 176 & 9.79E-02 & 2.56E-03 & 5.72E-04 \\
\midrule
\multirow{6}{*}{80, 800}
 & Mosek 
 & 8/8 & 0 & 109 & 0 & 0 & -- \\
 & Mosek\_Reg 
 & 8/8 & 0 & 188 & 0 & 3.80E-03 & 0 \\
 & ADMM 
 & 8/8 & 50 & 121 & 3.78E-04 & 4.98E-03 & -- \\
 & ASM 
 & 8/8 & 0 & 7006 & 0 & 3.82E-03 & 1.47E-05 \\
 & PAPG\_A 
 & 8/8 & 12 & 391 & 7.55E-02 & 3.81E-03 & 3.73E-04 \\
 & PAPG\_C 
 & 8/8 & 12 & 281 & 9.89E-02 & 3.81E-03 & 2.17E-04 \\
\midrule
\multirow{6}{*}{80, 1600}
 & Mosek 
 & 16/16 & 0 & 544 & 0 & 0 & -- \\
 & Mosek\_Reg 
 & 16/16 & 0 & 918 & 0 & 2.40E-03 & 0 \\
 & ADMM 
 & 16/16 & 221 & 537 & 2.93E-04 & 4.96E-03 & -- \\
 & ASM 
 & 16/16 & 0 & $>$2 \texttt{hours} & \texttt{N/A} & \texttt{N/A} & \texttt{N/A} \\
 & PAPG\_A 
 & 16/16 & 50 & 844 & 9.68E-02 & 3.16E-03 & 2.22E-03 \\
 & PAPG\_C 
 & 16/16 & 50 & 802 & 9.96E-02 & 3.17E-03 & 2.06E-03 \\
\midrule
\multirow{6}{*}{80, 2400}
 & Mosek 
 & 24/24 & 0 & 2537 & 0 & 0 & -- \\
 & Mosek\_Reg 
 & 24/24 & 0 & 4576 & 0 & 3.40E-03 & 0 \\
 & ADMM 
 & 24/24 & 678 & 2332 & 2.16E-04 & 4.99E-03 & -- \\
 & ASM 
 & 24/24 & 0 & $>$2 \texttt{hours} & \texttt{N/A} & \texttt{N/A} & \texttt{N/A} \\
 & PAPG\_A 
 & 24/24 & 155 & 1040 & 9.69E-02 & 3.26E-03 & 9.24E-04 \\
 & PAPG\_C 
 & 24/24 & 155 & 1184 & 9.88E-02 & 3.26E-03 & 1.30E-03 \\
\bottomrule
\end{tabular}}
\label{papg_asm_admm_xqx}
\end{table}%
\clearpage
\begin{table}[t!]
\caption{Comparison of PAPG and other methods $\exp( \pmb{p}^{\mathsf{T}}\pmb{x}) $}
\centering
\setlength{\tabcolsep}{2pt}
{\scriptsize
\begin{tabular}{clccclllll}
\toprule
\texttt{n, N} &\texttt{Algorithms} 
& \texttt{Cores/RAM} & \texttt{Preprocess} & \texttt{Wall-time\hspace{2mm}} & \texttt{Infeasibility} & \texttt{Accuracy\hspace{2mm}} & \texttt{SubOpt\_Reg} \\
\midrule
\multirow{6}{*}{80, 200}
 & Mosek 
 & 2/2 & 0 & 5 & 0 & 0 & -- \\
 & Mosek\_Reg 
 & 2/2 & 0 & 7 & 0 & 4.41E-04 & 0 \\
 & ADMM 
 & 2/2 & 3 & 4 & 6.41E-04 & 4.96E-03 & -- \\
 & ASM 
 & 2/2 & 0 & 155 & 0 & 4.41E-04 & 2.51E-05 \\
 & PAPG\_A 
 & 2/2 & 0 & 64 & 9.21E-02 & 4.43E-04 & 6.68E-03 \\
 & PAPG\_C 
 & 2/2 & 0 & 41 & 9.27E-02 & 4.42E-04 & 1.60E-03 \\
\midrule
\multirow{6}{*}{80, 400}
 & Mosek 
 & 4/4 & 0 & 19 & 0 & 0 & -- \\
 & Mosek\_Reg 
 & 4/4 & 0 & 33 & 0 & 9.28E-04 & 0 \\
 & ADMM 
 & 4/4 & 11 & 18 & 5.51E-04 & 4.95E-03 & -- \\
 & ASM 
 & 4/4 & 0 & 827 & 0 & 9.29E-04 & 2.90E-05 \\
 & PAPG\_A 
 & 4/4 & 3 & 167 & 9.39E-02 & 9.24E-04 & 3.00E-03 \\
 & PAPG\_C 
 & 4/4 & 3 & 143 & 9.63E-02 & 9.24E-04 & 2.92E-03 \\
\midrule
\multirow{6}{*}{80, 800}
 & Mosek 
 & 8/8 & 0 & 136 & 0 & 0 & -- \\
 & Mosek\_Reg 
 & 8/8 & 0 & 175 & 0 & 9.74E-04 & 0 \\
 & ADMM 
 & 8/8 & 50 & 97 & 3.50E-04 & 4.98E-03 & -- \\
 & ASM 
 & 8/8 & 0 & $>$ 2 \texttt{hours} & \texttt{N/A} & \texttt{N/A} & \texttt{N/A} \\
 & PAPG\_A 
 & 8/8 & 13 & 236 & 9.12E-02 & 9.72E-04 & 9.44E-04 \\
 & PAPG\_C 
 & 8/8 & 13 & 209 & 9.87E-02 & 9.69E-04 & 1.65E-04 \\
\midrule
\multirow{6}{*}{80, 1600}
 & Mosek 
 & 16/16 & 0 & 843 & 0 & 0 & -- \\
 & Mosek\_Reg 
 & 16/16 & 0 & 1107 & 0 & 1.40E-03 & 0 \\
 & ADMM 
 & 16/16 & 266 & 787 & 2.66E-04 & 4.98E-03 & -- \\
 & ASM 
 & 16/16 & 0 & $>$ 2 \texttt{hours} & \texttt{N/A} & \texttt{N/A} & \texttt{N/A} \\
 & PAPG\_A 
 & 16/16 & 62 & 461 & 9.72E-02 & 1.45E-03 & 3.07E-03 \\
 & PAPG\_C 
 & 16/16 & 59 & 742 & 9.95E-02 & 1.44E-03 & 1.28E-04 \\
\midrule
\multirow{6}{*}{80, 2400}
 & Mosek 
 & 24/24 & 0 & 2522 & 0 & 0 & -- \\
 & Mosek\_Reg 
 & 24/24 & 0 & 3365 & 0 & 1.80E-03 & 0 \\
 & ADMM 
 & 24/24 & 846 & 2486 & 2.12E-04 & 4.95E-03 & -- \\
 & ASM 
 & 24/24 & 0 & $>$ 2 \texttt{hours} & \texttt{N/A} & \texttt{N/A} & \texttt{N/A} \\
 & PAPG\_A 
 & 24/24 & 138 & 774 & 9.69E-02 & 1.82E-03 & 7.82E-03 \\
 & PAPG\_C 
 & 24/24 & 139 & 1379 & 9.94E-02 & 1.81E-03 & 2.23E-03 \\
\bottomrule
\end{tabular}
}
\label{papg_asm_admm_exp}
\end{table}%
\bibliographystyle{spmpsci}      
\bibliography{paper}   

\begin{thebibliography}{10}
\providecommand{\url}[1]{{#1}}
\providecommand{\urlprefix}{URL }
\expandafter\ifx\csname urlstyle\endcsname\relax
  \providecommand{\doi}[1]{DOI~\discretionary{}{}{}#1}\else
  \providecommand{\doi}{DOI~\discretionary{}{}{}\begingroup
  \urlstyle{rm}\Url}\fi

\bibitem{aguilera2008approximating}
Aguilera, N.E., Morin, P.: Approximating optimization problems over convex
  functions.
\newblock Numerische Mathematik \textbf{111}(1), 1--34 (2008)

\bibitem{aguilera2009convex}
Aguilera, N.E., Morin, P.: On convex functions and the finite element method.
\newblock SIAM Journal on Numerical Analysis \textbf{47}(4), 3139--3157 (2009)

\bibitem{Aybat12}
Aybat, N.S., Iyengar, G.: A unified approach for minimizing composite norms.
\newblock Mathematical Programming, Ser. A \textbf{144}(1-2), 181--226 (2014)

\bibitem{aybat2014parallel}
Aybat, N.S., Wang, Z.: A parallel method for large scale convex regression
  problems.
\newblock In: 53rd IEEE Conference on Decision and Control, pp. 5710--5717.
  IEEE (2014)

\bibitem{Beck09}
Beck, A., Teboulle, M.: A fast iterative shrinkage-thresholding algorithm for
  linear inverse problems.
\newblock SIAM J. Img. Sci. \textbf{2}(1), 183--202 (2009)

\bibitem{Bertsekas99}
Bertsekas, D.P.: Nonlinear Programming.
\newblock Athena Scientific (1999)

\bibitem{birke2007estimating}
Birke, M., Dette, H.: Estimating a convex function in nonparametric regression.
\newblock Scandinavian Journal of Statistics \textbf{34}(2), 384--404 (2007)

\bibitem{Boyd04}
Boyd, S., Vandenberghe, L.: Convex Optimization.
\newblock Cambridge University Press, New York, NY, USA (2004)

\bibitem{chen2016direct}
Chen, C., He, B., Ye, Y., Yuan, X.: The direct extension of admm for
  multi-block convex minimization problems is not necessarily convergent.
\newblock Mathematical Programming \textbf{155}(1-2), 57--79 (2016)

\bibitem{chen2001fundamentals}
Chen, H., Yao, D.: Fundamentals of queueing networks: Performance, asymptotics,
  and optimization, \emph{Stochastic Modelling and Applied Probability},
  vol.~46.
\newblock Springer (2001)

\bibitem{dent1973note}
Dent, W.: Note-a note on least squares fitting of functions constrained to be
  either nonnegative, nondecreasing or convex.
\newblock Management Science \textbf{20}(1), 130--132 (1973)

\bibitem{engl1989convergence}
Engl, H., Kunisch, K., Neubauer, A.: Convergence rates for \protect{Tikhonov}
  regularisation of non-linear ill-posed problems.
\newblock Inverse problems \textbf{5}(4), 523 (1989)

\bibitem{gill1974methods}
Gill, P.E., Golub, G.H., Murray, W., Saunders, M.A.: Methods for modifying
  matrix factorizations.
\newblock Mathematics of Computation \textbf{28}(126), 505--535 (1974)

\bibitem{groeneboom2001estimation}
Groeneboom, P., Jongbloed, G., Wellner, J.: Estimation of a convex function:
  Characterizations and asymptotic theory.
\newblock Annals of Statistics \textbf{29}(6), 1653--1698 (2001)

\bibitem{hannah2011approximate}
Hannah, L., Dunson, D.: Approximate dynamic programming for storage problems.
\newblock In: Proceedings of the 28th International Conference on Machine
  Learning (ICML-11), pp. 337--344 (2011)

\bibitem{hannah2012ensemble}
Hannah, L., Dunson, D.: Ensemble methods for convex regression with
  applications to geometric programming based circuit design.
\newblock arXiv preprint arXiv:1206.4645  (2012)

\bibitem{hannah2013multivariate}
Hannah, L., Dunson, D.: Multivariate convex regression with adaptive
  partitioning.
\newblock The Journal of Machine Learning Research \textbf{14}(1), 3261--3294
  (2013)

\bibitem{hanson1976consistency}
Hanson, D., Pledger, G.: Consistency in concave regression.
\newblock The Annals of Statistics pp. 1038--1050 (1976)

\bibitem{hildreth1954point}
Hildreth, C.: Point estimates of ordinates of concave functions.
\newblock Journal of the American Statistical Association \textbf{49}(267),
  598--619 (1954)

\bibitem{holloway1979technical}
Holloway, C.A.: Technical note on the estimation of convex functions.
\newblock Operations Research \textbf{27}(2), 401--407 (1979)

\bibitem{kuosmanen2008representation}
Kuosmanen, T.: Representation theorem for convex nonparametric least squares.
\newblock The Econometrics Journal \textbf{11}(2), 308--325 (2008)

\bibitem{lim2012consistency}
Lim, E., Glynn, P.: Consistency of multidimensional convex regression.
\newblock Operations Research \textbf{60}(1), 196--208 (2012)

\bibitem{magnani2009convex}
Magnani, A., Boyd, S.P.: Convex piecewise-linear fitting.
\newblock Optimization and Engineering \textbf{10}(1), 1--17 (2009)

\bibitem{mammen1991nonparametric}
Mammen, E.: Nonparametric regression under qualitative smoothness assumptions.
\newblock The Annals of Statistics pp. 741--759 (1991)

\bibitem{mazumder2015computational}
Mazumder, R., Choudhury, A., Iyengar, G., Sen, B.: A computational framework
  for multivariate convex regression and its variants.
\newblock arXiv preprint arXiv:1509.08165  (2015)

\bibitem{meyer1968consistent}
Meyer, R., Pratt, J.: The consistent assessment and fairing of preference
  functions.
\newblock Systems Science and Cybernetics, IEEE Transactions on \textbf{4}(3),
  270--278 (1968)

\bibitem{mosek}
MOSEK: The MOSEK optimization toolbox for MATLAB manual. Version 7.1 (Revision
  53). (2016).
\newblock \urlprefix\url{http://docs.mosek.com/7.1/toolbox/index.html}

\bibitem{mousavi2013shape}
Mousavi, M., Glynn, P.: Shape-constrained estimation of value functions.
\newblock preprint available at arXiv:1312.7035  (2013)

\bibitem{Nesterov04_1B}
Nesterov, Y.: Introductory lectures on convex optimization, \emph{Applied
  Optimization}, vol.~87.
\newblock Kluwer Academic Publishers, Boston, MA (2004)

\bibitem{nesterov2005excessive}
Nesterov, Y.: Excessive gap technique in nonsmooth convex minimization.
\newblock SIAM Journal on Optimization \textbf{16}(1), 235--249 (2005)

\bibitem{Nesterov05_1J}
Nesterov, Y.: Smooth minimization of nonsmooth functions.
\newblock Mathematical Programming, Series A \textbf{103}, 127--152 (2005)

\bibitem{Nocedal2006}
Nocedal, J., Wright, S.J.: Numerical Optimization, 2nd edn.
\newblock Springer, New York (2006)

\bibitem{nocedal2006numerical}
Nocedal, J., Wright, S.J.: Numerical Optimization (2nd edition).
\newblock Springer (2006)

\bibitem{Schmidt11}
Schmidt, M.W., Le~Roux, N., Bach, F.: Convergence rates of inexact
  proximal-gradient methods for convex optimization.
\newblock In: NIPS, pp. 1458--1466 (2011)

\bibitem{Sen11}
Seijo, E., Sen, B.: {Nonparametric least squares estimation of a multivariate
  convex regression function.}
\newblock {The Annals of Statistics} \textbf{39}(3), 1633--1657 (2011)

\bibitem{shively2011nonparametric}
Shively, T.S., Walker, S.G., Damien, P.: Nonparametric function estimation
  subject to monotonicity, convexity and other shape constraints.
\newblock Journal of Econometrics \textbf{161}(2), 166--181 (2011)

\bibitem{Tseng08}
Tseng, P.: On accelerated proximal gradient methods for convex-concave
  optimization (2008).
\newblock Preprint available at
  \url{http://www.eecs.berkeley.edu/\textasciitilde
  brecht/eecs227cdocs/tseng.pdf}

\bibitem{Yen95_1J}
Yen, N.D.: Lipschitz continuity of solutions of variational inequalities with a
  parametric polyhedral constraint.
\newblock Mathematics of Operations Research \textbf{20}(3), 695--708 (1995)

\end{thebibliography}


\end{document}